\newtheorem{theorem}{Theorem}[section]
\newtheorem{proposition}[theorem]{Proposition}
\theoremstyle{definition}
\newtheorem{assumption}[theorem]{Assumption}
\newtheorem{remark}[theorem]{Remark}
\newtheorem*{acknowledgment}{Acknowledgment}
\numberwithin{equation}{section}
\theoremstyle{plain}
\numberwithin{equation}{section} 
\numberwithin{figure}{section} 
\theoremstyle{plain}
\theoremstyle{plain}
\theoremstyle{remark}
\newtheorem*{acknowledgement*}{Acknowledgement}
\theoremstyle{example}
\newcommand{\cB}{{\mathcal B}}
\newcommand{\cC}{{\mathcal C}}
\newcommand{\cE}{{\mathcal E}}
\newcommand{\cF}{{\mathcal F}}
\newcommand{\cH}{{\mathcal H}}
\newcommand{\cI}{{\mathcal I}}
\newcommand{\cK}{{\mathcal K}}
\newcommand{\cL}{{\mathcal L}}
\newcommand{\cQ}{{\mathcal Q}}
\newcommand{\cX}{{\mathcal X}}
\newcommand{\cY}{{\mathcal Y}}
\newcommand{\te}{{\theta}}
\newcommand{\Om}{{\Omega}}
\newcommand{\om}{{\omega}}
\newcommand{\ve}{{\varepsilon}}
\newcommand{\del}{{\delta}}
\newcommand{\Del}{{\Delta}}
\newcommand{\gam}{{\gamma}}
\newcommand{\Gam}{{\Gamma}}
\newcommand{\sig}{{\sigma}}
\newcommand{\al}{{\alpha}}
\newcommand{\be}{{\beta}}
\newcommand{\ka}{{\kappa}}
\newcommand{\la}{{\lambda}}
\newcommand{\bbC}{{\mathbb C}}
\newcommand{\bbE}{{\mathbb E}}
\newcommand{\bbN}{{\mathbb N}}
\newcommand{\bbR}{{\mathbb R}}
\newcommand{\bbT}{{\mathbb T}}
\newcommand{\bbZ}{{\mathbb Z}}
\newcommand{\bbI}{{\mathbb I}}
\begin{document}
\title[]{Limit theorems for some time dependent expanding dynamical systems} 
 \vskip 0.1cm
 \author{Yeor Hafouta \\
\vskip 0.1cm
Department of Mathematics\\
The Hebrew University and the Ohio State University\\}%
\email{yeor.hafouta@mail.huji.ac.il}%

\thanks{ }
\subjclass[2010]{37C30; 37C40; 37H99; 60F05; 60F10}%
\keywords{limit theorems; Perron-Frobenius theorem; thermodynamic formalism; sequential dynamical systems; time dependent dynamical systems; random dynamics; random non-stationary environments;}
\dedicatory{  }
 \date{\today}

\maketitle
\markboth{Y. Hafouta}{Limit theorems} 
\renewcommand{\theequation}{\arabic{section}.\arabic{equation}}
\pagenumbering{arabic}

\begin{abstract}\noindent
In this paper we will prove
various probabilistic limit theorems for some classes of distance expanding sequential dynamical systems (SDS). Our starting point here is  certain sequential complex Ruelle-Perron-Frobenius (RPF) theorems which were proved in \cite{book} and \cite{ASIP me} using contraction properties of a complex version of the projective
Hilbert metric developed in \cite{Rug}. We will start from the growth rate of the variances of the underlying partial sums. This is well understood in the random dynamics setup, when the maps are stationary, but not in the SDS setup, where various growth rates can occur.
Some of our results in this direction rely on certain type of stability in these RPF theorems, which is one of the novelties of this paper. Then we will provide general conditions for several classical limit theorems to hold true in the sequential setup. 
Some of our general results mostly have applications for composition of random non-stationary map, while the conditions of the other results hold
true for general type of SDS. In the latter setup, results such as the Berry-Esse\'en theorem and the local central limit theorem were not obtained so far even for independent but not identically distributed maps, which is a particular case of the setup considered in this paper.
\end{abstract}

\section{Introduction}\label{sec1}
Probabilistic limit theorems for dynamical systems and Markov chains is a well studied topic. 
One  way to derive such results is relying on some quasi-compactness (or spectral gap)
of an appropriate transfer or Markov operator, together with a suitable perturbation theorem 
(see \cite{Neg1}, \cite{Neg2}, \cite{GH} and \cite{HH}). This  quasi-compactness  can often be verified only via 
an appropriate Ruelle-Perron-Frobenius (RPF) theorem, which is the main key for thermodynamic formalism type constructions.
Probabilistic limit theorems for
random dynamical systems and Markov chains in random dynamical environments were also studied in literature (see, for instance, \cite{Kifer-1996}, \cite{Kifer-1998}, \cite{book}, \cite{Aimino} ,\cite{drag}, \cite{drag1} and references therein). In these circumstances, the probabilistic behaviour of the appropriate process is determined by compositions of random operators, and not of a single operator, so no spectral theory can be exploit, and instead, many of these results rely on an appropriate version  
of the RPF theorem for random operators. 
 Relying on certain contraction properties of random complex transfer and Markov operators, with respect to a complex version of the Hilbert protective metric develpoed in \cite{Rug} (see also \cite{Dub1} and \cite{Dub2}), we proved in \cite{book} an RPF theorem for random complex operators and presented the appropriate random complex thermodynamic formalism type constructions,
which was one of the main keys in the proof of  versions of the Berry-Esseen theorem and the local central limit 
theorem for certain processes in random dynamical environment.

In recent years (see, for instance, \cite{Arno}, \cite{Conze}, \cite{Hyd2}, \cite{Zemer} and \cite{Nicol} and references therein) there has been a growing interest in proving limit theorems for sequences $X_n=T_0^n\textbf{x}_0$ of random variables generated by an appropriate random variable $\textbf{x}_0$ and compositions $T_0^n=T_{n-1}\circ T_{n-2}\circ\cdots\circ T_0$  of different maps $T_0,T_1,T_2,...$. Except for the random dynamical system case, in which the maps $T_j=T_{\xi_j}$ are chosen at random according to a stationary process $\{\xi_j\}$, the results obtained so far are certain versions of the central limit theorem (CLT), without close to optimal (distributional) convergence rate and corresponding local CLT's.  In this paper we will prove several limit theorems for some classes of maps $T_j$, as described in the next paragraphs. We stress that some, but not all, of our general results are counterparts of the results obtained in Chapter 7 of \cite{book} for sequences of maps instead of random stationary maps (and for non-uniformly distance expanding maps). We think that even though there is some overlap with this Chapter 7, it is important to have all the limit theorems formulated together as ``corollaries" of the sequential Ruelle-Perron-Frobenius theorem.

Let $T_j$ be a sequence of distance expanding maps, and $f_j,u_j$ sequences of  H\"older continuous (or differentiable) functions (uniformly in $j$). Let $\cL_0^{(j)}$ be the transfer operator generated by $T_j$ and the function $e^{f_j}$, 
and let $\cL_z^{(j)}(g)=\cL_0^{(j)}(g\cdot e^{zu_j}),\,z\in\bbC$ be the perturbations of $\cL_0^{(j)}$ corresponding to $u_j$.  The starting point of this paper 
is a  complex RPF theorem  for the sequences of transfer operator $\cL_z^{(j)},\,j\in\bbZ$. Applying this theorem we obtain
several limit theorems (finer than the CLT) for sequences of random variables of the form $S_{0,n}u(\textbf{x}_0)=\sum_{j=0}^{n-1}u_j\circ T_0^j(\textbf{x}_0)$, where $\mathbf{x}_0$ is distributed according some special (Gibbs) probability measure. We will also study certain stability properties of the RPF triplets, which will yield that the variance of the above random variables grow linearly fast in $n$, when the $T_j,f_j$ and $u_j$  lie in some neighborhood of a single distance expanding map $T$ and functions $f$ and $u$, respectively.

Some of the conditions of our general results hold true for non-random sequences of maps. The other results will be mainly applicable for certain classes random maps which are not necessarily stationary, a situation which  was not considered so far.
Several difficulties arise beyond the random dynamical setup (which was considered in \cite{book}), when the operators do not have the form $\cL_z^{(j)}=\cL_z^{\te^j\om}$  for an appropriate measure preserving system $(\Om,\cF,P,\te)$. For instance, in the stationary case many  limit theorems follow from the existence of the limit
$
\Pi(z)=\int\Pi_\om(z)dP(\om)=\lim_{n\to\infty}\frac 1n\sum_{j=0}^{n-1}\Pi_{\te^j\om}(z),
$
 where $\Pi_\om(z)$ is a certain random pressure function.
 In our case we have a sequence of pressure functions $\Pi_j(z),\,j\in\bbZ$ and the limits 
\begin{eqnarray*}
&\Pi(z)=\lim_{n\to\infty}\frac 1n\sum_{j=0}^{n-1}\Pi_{j}(z)
\end{eqnarray*}
do not necessarily exist. When $T_{j}=T_{\xi_j}, f_j=f_{\xi_j}$ and $u_j=u_{\xi_j}$ are chosen at random according to several classes of non-stationary sequences $\{\xi_j\}$ with certain mixing properties, we will show that the limits $\Pi(z)$ exist and are analytic functions of $z$, and then use that in order to apply some of our limit theorems.


\begin{acknowledgment}
I would like to thank Prof. Yuri Kifer for suggesting me to apply complex cone methods in the setup of time dependent dynamical systems, and for several references on these systems, as well.
\end{acknowledgment}

\section{Sequential dynamical systems: preliminaries and main results}\label{SDSsec}
Our setup consists of 
a bounded metric space $(\cX,d)$, a family $\{\cE_j:\,j\in\bbZ\}$ of  compact subsets of $\cX$, and a family of maps
$T_j:\cE_j\to\cE_{j+1},\,j\in\bbZ$. For any $j\in\bbZ$ and $n\geq1$ put
\[
T_j^n=T_{j+n-1}\circ\cdots\circ T_{j+1}\circ T_j.
\]
We will assume that one of the following three assumptions holds true

\begin{assumption}\label{Ass maps}
There exist constants $\xi>0, \gam>1,\, L,n_0\in\bbN,$ and $D>0$ so that
for any $j\in\bbZ$:

(i) For any $x\in\cE_j$,
\begin{equation}
T_j^{n_0}B_j(x,\xi)=\cE_{j+n_0}
\end{equation}
where  $B_j(x,\xi):=\{w\in\cE_j: d(w,x)<\xi\}$.

(ii) For any $x,x'\in\cE_{j+1}$ so that $d(x,x')<\xi$ we can write
\[
T_j^{-1}\{x\}=\{y_1,...,y_k\}\,\,\text{ and }\,\,T_j^{-1}\{x'\}=\{y'_1,...,y'_k\}
\]
where $k\leq D$ and for each $i=1,2,3,...,k$,
\begin{equation}\label{PairDist}
d(y_i,y_i')\leq\gam^{-1}d(x,x').
\end{equation}

(iii) There are $x_{1,j},x_{2,j},...,x_{L_j,j},\,L_j\leq L$ in $\cE_j$
so that 
\[
\cE_j=\bigcup_{s=1}^{L_j}B_j(x_{s,j},\xi).
\]
\end{assumption}

\begin{assumption}\label{Ass maps 2}
The inverse image $T_j^{-1}\{x\}$ of any point $x\in\cE_{j+1}$ under $T_j$ is at most countable, and
there exists a constant $\gam>1$ so that
the inverse images $\{x_i\}$ and $\{x_i'\}$ of any two points $x,x'\in\cE_{j+1}$ under $T_j$  can be paired so that (\ref{PairDist}) holds true.
\end{assumption}

\begin{assumption}\label{Ass maps 3}
There exist two sided sequences $(L_j)$, $(\sig_j)$, $(q_j)$ and $(d_j)$ so that $(L_j)$ is bounded and for each $j$ we have
 $\sigma_j>1$, $q_j,d_j\in\bbN$, $q_j<d_j$ and for any $x,x'\in\cE_{j+1}$ we can write 
\[
T_j^{-1}\{x\}=\{x_1,...,x_{d_j}\}\,\,\text{ and }\,\,T_j^{-1}\{x'\}=\{x'_1,...,x'_{d_j}\}
\]
where for any $i=1,2,...,q_j$ we have
\[
d(x_i,x'_i)\leq L_j\rho_{j+1}(x,x')
\]
while for any $i=q_{j}+1,...,d_j$ we have
\[
d(x_i,x'_i)\leq \sig_j^{-1}\rho_{j+1}(x,x').
\]
\end{assumption}

The arguments in \cite{MSU} show that Assumption \ref{Ass maps} is satisfied when the maps $T_j$ are locally distances expanding, uniformly in $j$. The second assumption holds true, for instance, when each $T_j$ is a map on the unit interval with a countable number of monotonicity intervals, and the absolute value of the derivative of $T_j$ on each one of these intervals is bounded from below by $\gam$. 
The main example we have in mind is the case where $\cX_j=M$ are the same
 compact and connected Riemannian manifold and each  $T_j:M\to M$ is a diffeomorphism satisfying the conditions in
 \cite{Vara} and \cite{castro} (with constant which are uniform in $j$).  This means that each $T_j$ locally expands distance only on some open region.
These conditions are satisfied when all the maps $T_j$ lie in a  $C^1$ neighborhood of some map $T$ satisfying the above conditions. Assumption \ref{Ass maps 2} holds true for intervals maps which only expand distance only on some pieces of the unit interval.


Next, for each integer $j$, let $f_j,u_j:\cE_j\to\bbR$ be H\"older continuous functions with 
exponent $\al$ which does not depend on $j$. 
For each integer $j$ and a complex number $z$, let 
$\cL_z^{(j)}$ be the linear operator which maps complex valued
functions $g$ on $\cE_j$ to complex valued functions $\cL_z^{(j)}g$ on $\cE_{j+1}$ by the formula
\[
\cL_z^{(j)}g(x)=\sum_{y\in T_j^{-1}\{x\}}e^{f_j(y)+zu_j(y)}g(y).
\]
For each $j\in\bbZ$ and $n\in\bbN$ set 
\[
\cL_z^{j,n}=\cL_z^{(j+n-1)}\circ\cdots\circ\cL_z^{(j+1)}\circ\cL_z^{(j)}.
\]
Let $\cH_j$ be the (Banach) space of all H\"older continuous functions $g:\cE_j\to\bbC$ with exponent $\al$, equipped with the norm $\|g\|_\al=\|g\|_\infty+v(g)$, where $\|g\|_\infty=\sup|g|$ and 
\[
v_\al(g)=v_{\al,\xi}(g)=\sup\Big\{\frac{|g(x)-g(x')|}{d^\al(x,x')}:\,0<d(x,x')\leq\xi\Big\}.
\]
In the case when $\al=1$ and $\cX_j$ is a Riemannian  manifold
we will also consider the case when $f_j$  and $u_j$ are $C^1$ functions and in this case we also consider the ''variation" $\nu(g)=\sup\|Dg\|$, where $Dg$ is the differential of $g$.
We will denote here by $\cH_j^*$ the dual of the Banach space $\cH_j$.
Finally, for any family of functions $\{g_j:\cE_j\to\bbC:\,j\in\bbZ\}$ we set for each integer $j$ and $n\geq1$,
\[
S_{n,j}g=\sum_{k=j}^{j+n-1}g_k\circ T_j^k.
\]
We will work in this paper under the following 
\begin{assumption}\label{f u ass}
Under Assumptions \ref{Ass maps} and \ref{Ass maps 2},
the norms $\|f_j\|$
and $\|u_j\|$ are bounded in $j$ by some constant $B$. 
\end{assumption}

\begin{assumption}\label{Bound ass}
 Assumption \ref{Ass maps 3} holds true and there exist constants $B>0$ and $s\in(0,1)$ so that 
we have $\sup_j\sup |f_j|\leq B$, $\sup_j\sup_x\sum_{y\in T_j^{-1}\{x\}}e^{f_j(y)}\leq B$,
\[
\sup{f_j}-\inf{f_j}<\ve_j\,\,\text{ and }\,\,v(\phi_j)<\ve'_j
\]
where $\ve_j,\ve'_j$ are sequence of positive constants satisfying
\begin{equation}\label{s sup}
\sup_{j}e^{\ve_j}\frac{q_jL_j^\al-(d_j-q_j)\sig_j^{-\al}}{d_j}\leq s
\end{equation}
and $\sup_j\ve'_j\leq B$.
\end{assumption}

From now on, we will refer to the constants  appearing in 
 in Assumptions \ref{Ass maps}, \ref{Ass maps 2}, \ref{Ass maps 3}, \ref{f u ass} and \ref{Bound ass} as the  ``initial parameters".

Our starting point in this paper is the following 

\begin{theorem}\label{RPF SDS}
Suppose that one of Assumptions \ref{Ass maps}, \ref{Ass maps 2}  and \ref{Ass maps 3} hold true and that either Assumption \ref{f u ass} or Assumption \ref{Bound ass} are satisfied (depending on the case) hold true.
Then there exists a neighborhood $U$ of $0$, which depends only on the initial parameters, so that for any $z\in U$
there exist families
$\{\la_j(z):\,j\in\bbZ\}$, $\{h_j^{(z)}:\,j\in\bbZ\}$ and $\{\nu_j^{(z)}:\,j\in\bbZ\}$ 
consisting of a nonzero complex number 
$\la_j(z)$, a complex function $h_j^{(z)}\in\cH_j$ and a 
complex continuous linear functional $\nu_j^{(z)}\in\cH_j^*$ such that:

(i) For any $j\in\bbZ$,
\begin{equation}\label{RPF deter equations-General}
\cL_z^{(j)} h_j^{(z)}=\la_j(z)h_{j+1}^{(z)},\,\,
(\cL_z^{(j)})^*\nu_{j+1}^{(z)}=\la_j(z)\nu_{j}^{(z)}\text{ and }
\nu_j^{(z)}(h_j^{(z)})=\nu_j^{(z)}(\textbf{1})=1
\end{equation} 
where $\textbf{1}$ is the function which takes the constant value $1$.
When $z=t\in\bbR$ then 
 $\la_j(t)>a$ and the function $h_j(t)$ takes values at some interval $[c,d]$, where $a>0$ and $0<c<d<\infty$ depend only on the initial parameters. Moreover, $\nu_j^{(t)}$ is a probability measure which assigns positive mass to open subsets of $\cE_j$ and the equality 
$\nu_{j+1}(t)\big(\cL_t^{(j)} g)=\la_j(t)\nu_{j}^{(t)}(g)$ holds true for any 
bounded Borel function $g:\cE_j\to\bbC$.

(ii) The maps 
\[
\la_j(\cdot):U\to\bbC,\,\, h_j^{(\cdot)}:U\to \cH_j\,\,\text{ and }
\nu_j^{(\cdot)}:U\to \cH_j^*
\]
are analytic and there exists a constant $C>0$, which depends only on the initial parameters
such that 
\begin{equation}\label{UnifBound}
\max\big(\sup_{z\in U}|\la_j(z)|,\, 
\sup_{z\in U}\|h_j^{(z)}\|,\, \sup_{z\in U}
\|\nu^{(z)}_j\|\big)\leq C,
\end{equation}
where $\|\nu\|$ is the 
operator norm of a linear functional $\nu:\cH_j\to\bbC$.
Moreover, there exist a constant $c>0$, which depends only on the initial parameters, so that $|\la_j(z)|\geq c$ 
and $\min_{x\in \cE_j}|h_j^{(z)}(x)|\geq c$ for any integer $j$ and $z\in U$.

(iii) There exist constants $A>0$ and $\del\in(0,1)$, 
which depend only on the initial parameters,
 so that for any $j\in\bbZ$, $g\in\cH_j$
and $n\geq1$,
\begin{equation}\label{Exponential convergence}
\Big\|\frac{\cL_z^{j,n}g}{\la_{j,n}(z)}-\nu_j^{(z)}(g)h^{(z)}_{j+n}\Big\|\leq A\|g\|\del^n
\end{equation}
where $\la_{j,n}(z)=\la_{j}(z)\cdot\la_{j+1}(z)\cdots\la_{j+n-1}(z)$. Moreover, the probability measures $\mu_j,\,j\in\bbZ$ given by $d\mu_j=h_j^{(0)}d\nu_j^{(0)}$ 
satisfy that $(T_j)_*\mu_j=\mu_{j+1}$ and that
 for any $n\geq1$ and $f\in\cH_{j+n}$,
\begin{equation}\label{ExpDec}
\big|\mu_j(g\cdot f\circ T_j^n)-\mu_j(g)\mu_{j+n}(f)\big|\leq A\|g\|\mu_{j+n}(|f|)\del^n.
\end{equation}
\end{theorem}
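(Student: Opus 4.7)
The strategy is to derive the three conclusions from the complex projective cone contraction method of Rugh as developed in \cite{Rug,Dub1,Dub2,book}. Under each of the three geometric assumptions one first exhibits a real positive cone $\cC\subset\cH_j$ — consisting of strictly positive H\"older functions with values in a fixed interval $[c_0,d_0]$ and H\"older seminorm bounded by a multiple of the sup — such that for real $t$ near $0$ the operator $\cL_t^{(j)}$ maps $\cC$ strictly inside itself with projective Hilbert diameter bounded uniformly in $j$. Assumption \ref{Ass maps} gives this through the standard Lasota--Yorke inequality combined with the uniform covering/big-preimage properties (i)--(iii); Assumption \ref{Ass maps 2} through the uniform bound on $\sum_{y\in T_j^{-1}\{x\}}e^{f_j(y)}$ and the distance-contracting inverse branches; and Assumption \ref{Ass maps 3} through the partial-expansion estimate (\ref{s sup}) in Assumption \ref{Bound ass}. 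Birkhoff's theorem then yields a uniform exponential contraction of the Hilbert metric on $\cC$. Following Rugh, $\cC$ canonically extends to a complex cone $\cC^{\bbC}\subset\cH_j$ that is still contracted in the associated complex projective metric by any sufficiently small complex perturbation of a real strictly cone-contracting operator. Since $\cL_z^{(j)}g=\cL_0^{(j)}(e^{zu_j}g)$ depends analytically on $z$ with a norm bound uniform in $j$, there is a complex neighborhood $U$ of $0$, depending only on the initial parameters, on which each $\cL_z^{(j)}$ contracts $\cC^{\bbC}$ at a uniform exponential rate $\del<1$.

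With this in hand, $h_j^{(z)}$ is constructed as the limit of the normalized backward iterates $\cL_z^{j-n,n}\mathbf{1}/\ell(\cL_z^{j-n,n}\mathbf{1})$ for a positive continuous linear functional $\ell$: the cone contraction together with the equivalence between the complex Hilbert metric and $\|\cdot\|_\al$ on $\cC^{\bbC}$ forces this sequence to be Cauchy in $\cH_j$ exponentially fast and uniformly in $z\in U$, and the limit lies in $\cC^{\bbC}$. Dually, $\nu_j^{(z)}$ is obtained by iterating the adjoints $(\cL_z^{(j)})^*$ starting from a reference functional in $\cH_{j+n}^*$ and normalizing. The eigenvalue is then $\la_j(z)=\nu_{j+1}^{(z)}(\cL_z^{(j)}h_j^{(z)})$, and the conditions $\nu_j^{(z)}(h_j^{(z)})=\nu_j^{(z)}(\mathbf{1})=1$ are imposed by rescaling. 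For real $z=t$, since $\cC$ consists of strictly positive bounded functions and the adjoints preserve the cone of probability measures on $\cE_j$, one obtains $h_j^{(t)}\in[c,d]$, $\la_j(t)>a$, and a probability measure $\nu_j^{(t)}$ positive on nonempty open sets; the uniform bounds in (\ref{UnifBound}) and the lower bounds on $|\la_j(z)|$ and $|h_j^{(z)}|$ follow from the uniform cone parameters.

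Analyticity on $U$ follows from the fact that every finite-$n$ approximant is analytic in $z$ and the convergence is uniform on compact subsets of $U$, whence Weierstrass's theorem applies. The exponential convergence (\ref{Exponential convergence}) comes by decomposing an arbitrary $g\in\cH_j$ as a difference of two elements of $\cC^{\bbC}$ with norms comparable to $\|g\|_\al$, applying the cone contraction to each, and recombining. The two RPF identities $\cL_0^{(j)}h_j^{(0)}=\la_j(0)h_{j+1}^{(0)}$ and $(\cL_0^{(j)})^*\nu_{j+1}^{(0)}=\la_j(0)\nu_j^{(0)}$ combine via the deterministic transfer-operator identity $\cL_0^{(j)}((\phi\circ T_j)\psi)=\phi\cdot\cL_0^{(j)}\psi$ to give $(T_j)_*\mu_j=\mu_{j+1}$ for $\mu_j=h_j^{(0)}\nu_j^{(0)}$ (the factors $\la_j(0)$ cancel). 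Finally, (\ref{ExpDec}) follows from the chain
\[
\mu_j(g\cdot f\circ T_j^n)=\la_{j,n}(0)^{-1}\nu_{j+n}^{(0)}\big(f\cdot\cL_0^{j,n}(gh_j^{(0)})\big)
\]
and (\ref{Exponential convergence}) applied to $gh_j^{(0)}\in\cH_j$, using that $h_j^{(0)}$ is uniformly bounded above and below.

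The main obstacle is entirely concentrated in the first step: producing, under each of Assumptions \ref{Ass maps}, \ref{Ass maps 2}, and \ref{Ass maps 3}, a single real cone with strict invariance under $\cL_t^{(j)}$ and uniform-in-$j$ projective diameter, with all constants depending only on the initial parameters. The case of Assumption \ref{Ass maps 3} is the most delicate, because the expansion is only partial: the H\"older variation estimate for $\cL_0^{(j)}g$ must balance the $q_j$ non-contracting inverse branches (where $T_j$ is only $L_j$-Lipschitz) against the $d_j-q_j$ strictly contracting ones, and the inequality (\ref{s sup}) is precisely the quantitative condition that tips this balance to the contracting side with uniform rate $s<1$. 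Once the real cone contraction is in place, Rugh's complexification lemma together with the uniform norm control on $\cL_z^{(j)}-\cL_0^{(j)}$ supplies the complex contraction on a uniform neighborhood $U$, and the remainder of the proof is essentially bookkeeping.
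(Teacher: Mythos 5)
Your proposal follows exactly the route the paper itself relies on: the paper does not reprove Theorem \ref{RPF SDS} but refers to Chapter 5 of \cite{book} (for Assumption \ref{Ass maps}) and to \cite{ASIP me} (for Assumption \ref{Ass maps 3}), both of which establish the result via real cone invariance, Rugh's canonical complexification of the Hilbert projective metric, and the limits of normalized forward/backward iterates, precisely as you outline. Your identification of the uniform-in-$j$ cone contraction (and the balancing condition (\ref{s sup}) in the partially expanding case) as the sole genuine difficulty, and your derivations of (\ref{Exponential convergence}) and (\ref{ExpDec}) from the cone contraction and the RPF identities, are consistent with those references, so the proposal is correct and takes essentially the same approach.
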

Under Assumptions \ref{Ass maps} Theorem \ref{RPF SDS} was essentially proved in Chapter 5 of \cite{book}, and under Assumption \ref{Ass maps 3} it was proved in \cite{ASIP me}. The proof under Assumption \ref{Ass maps 2} proceeds similarly to the proof under Assumption \ref{Ass maps}, and for that reason the details are omitted.
 
We note that when  the $\{T_j\}$ are "sequentially non-singular" the measures $\mu_j$ are absolutely continuous, 
as stated in the following

\begin{proposition}\label{NonSinThm}
Let $\textbf{m}_j,\,j\in\bbZ$ be a family of probability measures on $\cE_j$, which assign positive mass to open sets, so that for each $j$
we have $(T_j)_*\textbf{m}_j\ll\textbf{m}_{j+1}$ and that $e^{-f_j}=\frac{d(T_j)_*\textbf{m}_j}{d\textbf{m}_{j+1}}$. Then for any $j$ we have
$\la_j(0)=1$ and $\nu_j^{(0)}=\textbf{m}_j$.
\end{proposition}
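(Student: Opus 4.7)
The strategy is to recognize the Jacobian hypothesis as a dual eigenequation for $\mathbf{m}_j$ with eigenvalue $1$, and then use the exponential convergence from Theorem~\ref{RPF SDS}(iii) to force $\mathbf{m}_j=\nu_j^{(0)}$ (and hence $\lambda_j(0)=1$).

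First I would translate the Jacobian hypothesis into the dual identity
\[
\mathbf{m}_{j+1}\bigl(\cL_0^{(j)} g\bigr) = \mathbf{m}_j(g), \qquad g \in \cH_j,\ j \in \bbZ.
\]
Under any of Assumptions~\ref{Ass maps}, \ref{Ass maps 2}, \ref{Ass maps 3} each $T_j$ is (at most countably) injective on the pieces of a Borel partition $\{A_k\}$ of $\cE_j$; for each such branch the hypothesis $e^{-f_j}=d(T_j)_*\mathbf{m}_j/d\mathbf{m}_{j+1}$ yields the branchwise change of variables $\mathbf{m}_{j+1}(T_jA_k)=\int_{A_k} e^{-f_j}d\mathbf{m}_j$, and summing these identities against the terms of $\cL_0^{(j)}g$ produces the displayed duality. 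Iterating, one obtains
\[
\mathbf{m}_{j+n}\bigl(\cL_0^{j,n} g\bigr) = \mathbf{m}_j(g) \qquad \text{for every } n \geq 1.
\]

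Next I would apply the exponential decay estimate \eqref{Exponential convergence} at $z=0$ and integrate against $\mathbf{m}_{j+n}$. Combined with the iterated duality this yields the key estimate
\[
\left|\frac{\mathbf{m}_j(g)}{\lambda_{j,n}(0)} - \nu_j^{(0)}(g)\,\mathbf{m}_{j+n}\bigl(h_{j+n}^{(0)}\bigr)\right| \leq A\|g\|\,\delta^n.
\]
Specialising to $g=\mathbf{1}$ and using $\nu_j^{(0)}(\mathbf{1})=\mathbf{m}_j(\mathbf{1})=1$ together with the two-sided bounds $c\leq h_{j+n}^{(0)}\leq C$ supplied by Theorem~\ref{RPF SDS}(ii), I would deduce $\mathbf{m}_{j+n}(h_{j+n}^{(0)})\in[c,C]$ and $\lambda_{j,n}(0)^{-1}=\mathbf{m}_{j+n}(h_{j+n}^{(0)})+O(\delta^n)$; in particular $\lambda_{j,n}(0)$ is bounded uniformly in $n$. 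Re-inserting this into the displayed inequality for arbitrary $g$ and sending $n\to\infty$ yields $\mathbf{m}_j(g)=\nu_j^{(0)}(g)$ for every $g\in\cH_j$, which extends to bounded Borel $g$ by the standard density of Hölder functions in $L^1(\mathbf{m}_j)$; hence $\mathbf{m}_j=\nu_j^{(0)}$.

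The eigenvalue identity is then immediate: combining $\mathbf{m}_j=\nu_j^{(0)}$ with the duality from the first step and with $(\cL_0^{(j)})^*\nu_{j+1}^{(0)}=\lambda_j(0)\,\nu_j^{(0)}$ from Theorem~\ref{RPF SDS}(i) gives $\lambda_j(0)\,\nu_j^{(0)}=\nu_j^{(0)}$, so $\lambda_j(0)=1$ since $\nu_j^{(0)}$ is a nonzero probability measure. The one genuinely delicate point is the first step—justifying the branchwise change-of-variables cleanly from the stated Jacobian hypothesis; once that is in hand, everything reduces to exponential-convergence bookkeeping together with the uniform bounds on $h_j^{(0)}$.
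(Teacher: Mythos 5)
Your argument is correct and follows essentially the same route as the paper's (which simply defers to Proposition 3.1.2 of \cite{Annealed}): read the hypothesis as the dual invariance $\textbf{m}_{j+1}(\cL_0^{(j)}g)=\textbf{m}_j(g)$, iterate, and let the exponential convergence (\ref{Exponential convergence}) at $z=0$, together with the two-sided bounds on $h_{j+n}^{(0)}$ and hence on $\la_{j,n}(0)$, force $\textbf{m}_j=\nu_j^{(0)}$ and then $\la_j(0)=1$ from the dual eigenequation. The branchwise change of variables you flag as delicate is indeed the intended reading of $e^{-f_j}=\frac{d(T_j)_*\textbf{m}_j}{d\textbf{m}_{j+1}}$ (namely that $e^{-f_j}$ is the Jacobian of $T_j$ with respect to the pair $(\textbf{m}_j,\textbf{m}_{j+1})$, so that $\textbf{m}_{j+1}(T_jA)=\int_A e^{-f_j}\,d\textbf{m}_j$ on injectivity domains), and is standard under any of Assumptions \ref{Ass maps}--\ref{Ass maps 3}.
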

When $\cE_j=\cX$ we can always take $\textbf{m}_j=\textbf{m}$ for some fixed $\textbf{m}$ (e.g. a volume measure
when $\cX$ is a Riemannian manifold). Theorem \ref{NonSinThm} is proved exactly as in \cite{Annealed} (see Proposition 3.1.2 there).


Our first result in this paper is the following stability theorem:
\begin{theorem}\label{stability}
Let $r>0$ be so that $\bar B(0,2r)=:\{z\in\bbZ:\, |z|\leq 2r\}\subset U$ and set
$K=\bar B(0,r)$. Then 
for any $\ve>0$ there exists $\del>0$ with the following property: if $T_{1,j},\,j\in\bbZ$ is a family of maps and 
$f_{1,j},u_{1,j}\in\cH_j$, where $j\in\bbZ$, are families of  functions which satisfy one of Assumptions \ref{Ass maps}-\ref{Ass maps 3} and one of Assumptions \ref{f u ass} and \ref{Bound ass} (in accordance with $T_j$ and with the same initial parameters), and for any $z\in K$ and  $j\in\bbZ$,
\begin{equation}\label{ve 0}
\|\cL_z^{(j)}-\cL_{1,z}^{(j)}\|<\del
\end{equation}
where $\cL_{1,z}^{(j)}$ is the operator defined similarly to $\cL_{z}^{(j)}$ but with $T_{1,j},f_{1,j}$ and $u_{1,j}$ in place of 
$T_{j},f_{j}$ and $u_{j}$,
then for any integer $j$ and $z\in K$ we have 
\[
\max\big(|\la_j(z)-\la_{1,j}(z)|,\,\|h_{j}^{(z)}-h_{1,j}^{(z)}\|,
\,\|\nu_{j}^{(z)}-\nu_{1,j}^{(z)}\|\big)<\ve
\]
where $\{\la_{1,j}(z):\,j\in\bbZ\}, \{h_{1,j}^{(z)}:\,j\in\bbZ\}$ and $\{\nu_{1,j}^{(z)}:\,j\in\bbZ\}$,\, $z\in U$ are the RPF families corresponding to the operators $\cL_{1,z}^{(j)}$.
\end{theorem}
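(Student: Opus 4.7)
The strategy is to recover each of $\la_j(z)$, $h_j^{(z)}$, $\nu_j^{(z)}$ from an expression in finitely many iterates $\cL_z^{j',m}\mathbf{1}$ with error exponentially small in the truncation parameter $n$, and then observe that for each fixed $n$ these finite-$n$ expressions depend continuously on the operator family, with modulus of continuity finite (depending on $n$). Since both systems satisfy the same hypotheses with the same initial parameters, every constant supplied by Theorem~\ref{RPF SDS} --- the exponential rate (call it $\te\in(0,1)$ to avoid clashing with the stability parameter $\del$), the constants $A,C,c$, and the operator-norm bound $M$ on $\cL_z^{(j)}$ obtained from~(\ref{Exponential convergence}) at $n=1$ --- is the same for both systems, uniformly in $j\in\bbZ$ and $z\in K$. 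Fix reference points $x_i\in\cE_i$ (e.g.\ one of the $x_{s,i}$ from Assumption~\ref{Ass maps}) and write $x=x_{j+n}$.

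The crucial approximation identities, valid for every $j\in\bbZ$, every $z\in K$, every $g\in\cH_j$ and every $n\geq n_0$ (with $n_0$ depending only on initial parameters), are
\[
\nu_j^{(z)}(g)=\frac{(\cL_z^{j,n}g)(x)}{(\cL_z^{j,n}\mathbf{1})(x)}+O(\|g\|\te^n),\qquad
h_j^{(z)}=\frac{(\cL_z^{j,n}\mathbf{1})(x)}{(\cL_z^{j-n,2n}\mathbf{1})(x)}\cdot \cL_z^{j-n,n}\mathbf{1}+O(\te^n),
\]
together with the exact identity $\la_j(z)=\nu_{j+1}^{(z)}(\cL_z^{(j)}\mathbf{1})$. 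The first is immediate from~(\ref{Exponential convergence}) applied to $g$ and to $\mathbf{1}$ together with the lower bound $|h_{j+n}^{(z)}(x)|\geq c$. The second combines~(\ref{Exponential convergence}) with $g=\mathbf{1}$ on both $[j-n,j]$ and $[j-n,j+n]$ with the identity $\la_{j-n,2n}(z)=\la_{j-n,n}(z)\la_{j,n}(z)$ to recover $1/\la_{j-n,n}(z)$ as $(\cL_z^{j,n}\mathbf{1})(x)/(\cL_z^{j-n,2n}\mathbf{1})(x)$ up to relative error $O(\te^n)$; that the additive error after multiplication is $O(\te^n)$ (and not $O(M^n\te^n)$) relies on the bound $\|\cL_z^{j-n,n}\mathbf{1}\|/|\la_{j-n,n}(z)|\leq C+A\te^n$, itself immediate from~(\ref{Exponential convergence}). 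The third follows from $(\cL_z^{(j)})^*\nu_{j+1}^{(z)}=\la_j(z)\nu_j^{(z)}$ evaluated at $\mathbf{1}$, using $\nu_j^{(z)}(\mathbf{1})=1$. All implicit $O$-constants depend only on the initial parameters.

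A standard telescoping estimate yields $\|\cL_z^{j,m}-\cL_{1,z}^{j,m}\|\leq mM^{m-1}\del$ for every $m$, and together with the lower bound $|(\cL_z^{j,m}\mathbf{1})(x)|\geq c|\la_{j,m}(z)|/2\geq c^{m+1}/2$ for $m\geq n_0$ (again from~(\ref{Exponential convergence})), the denominators in the primed-system counterparts of the displayed formulas remain bounded below by a positive constant depending only on $n$ and initial parameters, provided $\del$ is small enough. Consequently the finite-$n$ approximations for the two systems differ by at most $K_n\del$, with $K_n$ depending only on $n$ and initial parameters. Given $\ve>0$, choose $n$ so large that all exponential error terms are smaller than $\ve/2$, and then $\del<\ve/(2K_n)$, to conclude $\|h_j^{(z)}-h_{1,j}^{(z)}\|<\ve$ and $\|\nu_j^{(z)}-\nu_{1,j}^{(z)}\|<\ve$ uniformly in $j$ and $z\in K$; the bound on $|\la_j(z)-\la_{1,j}(z)|$ then follows by writing $\la_j(z)-\la_{1,j}(z)=(\nu_{j+1}^{(z)}-\nu_{1,j+1}^{(z)})(\cL_z^{(j)}\mathbf{1})+\nu_{1,j+1}^{(z)}(\cL_z^{(j)}\mathbf{1}-\cL_{1,z}^{(j)}\mathbf{1})$ and rescaling $\ve$ at the outset. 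The main obstacle is that $K_n$ may grow super-exponentially in $n$, so one cannot let $n\to\infty$ and $\del\to 0$ simultaneously; this is not a real difficulty because the logical order of the theorem lets us choose $n$ first (depending only on $\ve$ via $\te^n$) and $\del$ only afterwards (depending on both $n$ and $\ve$).
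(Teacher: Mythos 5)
Your proposal is correct and follows essentially the same strategy as the paper's proof: approximate each RPF object by an expression in finitely many compositions $\cL_z^{j',m}\mathbf{1}$ with error $O(\te^n)$ via (\ref{Exponential convergence}), then exploit the telescoping Lipschitz bound $\|\cL_z^{j,m}-\cL_{1,z}^{j,m}\|\leq mM^{m-1}\del$ for fixed $m$, choosing $n$ first and $\del$ afterwards. The only (harmless) deviations are that you compare the functionals directly at each $z\in K$ where the paper detours through finitely many Taylor coefficients at $z=0$ via the Cauchy integral formula, and that you recover $1/\la_{j-n,n}(z)$ from a ratio of point evaluations rather than first proving stability of $\la_j(z)$ and feeding it into the normalization of $h_j^{(z)}$ as the paper does.
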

The relation between this theorem and to ``limit theorems" is that it is the key ingredient in the proof of Theorem \ref{Variance SDS} (ii) below.

\subsection{Probabilistic limit theorems: general formulations}
In this section we will formulate general theorems, while some of them will mainly be applicable to the case of non-stationary random environment, Theorem \ref{Variance SDS}, the Berry-Esseen theorem the exponential, the exponential concentration inequalities and the moderate deviations theorems stated below are applicable for non-random sequences of maps. We especially want to stress that Theorem \ref{Gen LLT} is a modification of the local central limit theorem from \cite{book} in the sequential setup, and its main importance in its applications to the non-stationary random environments studied in Section \ref{NonStat}. The main reason this theorem (and Theorem \ref{Gen: Var, LD, MD}) are formulated here is in order not to overload the exposition with the precise definitions of the different non-stationary environment we have in mind.

\subsubsection{The variance and the moments}
We begin with the variances:
\begin{theorem}\label{Variance SDS}
(i) The variances $\text{var}_{\mu_k}(S_{k,n}u),\,k\in\bbZ$ do not
converge to $\infty$ as $n\to\infty$ if and only if there exists a family of functions $Y_s:\cE_s\to\bbR,\,s\in\bbZ$
and a constant $C>0$
so that for any $s\in\bbZ$ we have
\[
\int Y_s^2(x)d\mu_s(x)<C\,\,\text{ and }\,\,u_s-\mu_s(u_s)=Y_{s+1}\circ T_s-Y_s,\,\mu_s-\text{a.s.}
\]
When viewed as a random variable, the function $Y_k$ is a member of the subspace 
of $L^2(\cE_k,\mu_k)$ generated by the functions $\{u_k\circ T_k^d-\mu_k(u_k): d\geq 0 \}$ where $T_k^0:=\text{Id}$.
Moreover, the function $Y_k$ can be chosen to be a member of $\cH_k$ so that $\|Y_k\|$ is bounded 
in $k$, and in this case the equalities $u_s(x)-\mu_s(u_s)=Y_{s+1}(T_sx)-Y_s(x)$ hold true for any $x\in\cE_s$. 

(ii) Suppose that $\cE_k=\cX$ for each $k$. Let $T:\cX\to\cX$ be a map so that one of Assumptions \ref{Ass maps}-\ref{Ass maps 3} are satisfies with $T_j=T$, and let $u,f:\cX\to\bbR$ be members of $\cH=\cH_j$, so that $\|f\|$ and $\|u\|$ do not exceed $B$, and
$u$ does not admit a co-boundary representation with respect to $T$. In the case when Assumption \ref{Ass maps 3} holds true we also require Assumption \ref{Bound ass} to hold with $T_j=T, f_j=f$ and $u_j=u$ (with the same initial parameters).
Then there exists $\ve_0>0$, which depends only on the initial parameters,  so that the following holds true: if 
\begin{equation}\label{ve }
\sup_{k\in\bbZ}\|\cL_{z}^{(k)}-\cL_z\|_\infty\leq \ve_0
\end{equation}
for any $z$ in some neighborhood of $0$, where $\cL_z$ is the transfer operator generated by $T$ and $f+zu$, then 
\[
\inf_{k}\text{var}_{\mu_k}(S_{k,n}u)\geq \del_0 n
\] 
for some $\del_0>0$ and all sufficiently large $n$.

\end{theorem}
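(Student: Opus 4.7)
The theorem splits into a Gordin-type coboundary characterization (part (i)) and a stability-based lower bound for the variance (part (ii)). I address them in turn.

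For part (i), direction $(\Leftarrow)$ is a telescoping calculation: writing $\bar u_s := u_s - \mu_s(u_s)$, the hypothesis $\bar u_s = Y_{s+1}\circ T_s - Y_s$ gives
\[
S_{k,n}\bar u = Y_{k+n}\circ T_k^n - Y_k,
\]
and $(T_k^n)_*\mu_k = \mu_{k+n}$ with Minkowski's inequality yield $\|S_{k,n}\bar u\|_{L^2(\mu_k)} \leq 2\sqrt{C}$. For the converse, under $\sup_{k,n}\mathrm{var}_{\mu_k}(S_{k,n}u) < \infty$, I would pass to weak $L^2$-subsequential limits $V_k$ of $S_{k,n_\ell}\bar u$ in $L^2(\mu_k)$ along a common subsequence (countable diagonal argument in $k\in\bbZ$) and set $Y_k := -V_k$. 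The recursion $S_{k,n}\bar u = \bar u_k + S_{k+1,n-1}\bar u\circ T_k$ combined with the $L^2$-isometry property of $g\mapsto g\circ T_k$ (hence weak continuity) gives $\mu_k$-almost surely $\bar u_k = Y_{k+1}\circ T_k - Y_k$. Since each partial sum $S_{k,n_\ell}\bar u$ lies in the closed linear span of $\{\bar u_{k+d}\circ T_k^d : d\geq 0\}$, so does its weak limit $-Y_k$.

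For the H\"older refinement of (i), let
\[
\hat T_j g := \cL_0^{(j)}(g h_j^{(0)})/(\la_j(0) h_{j+1}^{(0)})
\]
be the normalized transfer operator (so $\hat T_j\mathbf 1 = \mathbf 1$ and $\hat T_j$ is dual to composition with $T_j$ relative to $\mu_j, \mu_{j+1}$). By (\ref{ExpDec}), each iterate $\hat T_{k-m}^m \bar u_{k-m} \in \cH_k$ has H\"older norm at most $A(2B)\del^m$, so
\[
Y_k := \sum_{m=1}^{\infty} \hat T_{k-m}^m \bar u_{k-m}
\]
converges in $\cH_k$ uniformly in $k$. The $\mu_k$-a.e.\ coboundary identity just constructed, combined with H\"older continuity of both sides and the full-support property of $\mu_s$ from Theorem \ref{RPF SDS}(i), upgrades to the pointwise identity $u_s(x) - \mu_s(u_s) = Y_{s+1}(T_s x) - Y_s(x)$ for every $x \in \cE_s$.

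For part (ii), I would apply Theorem \ref{stability} with $(T, f, u)$ as the reference system: a sufficiently small $\ve_0$ in the hypothesis forces $\sup_k\sup_{z\in K}|\la_k(z) - \la(z)| < \ve$, with analogous closeness for $h_k^{(z)}$ and $\nu_k^{(z)}$. Cauchy's integral formula on a disc inside $K$ then transfers this to closeness of the second derivatives $\Pi_k''(0)$ of the pressure functions $\Pi_k(z) := \log\la_k(z)$ to $\sigma^2 := \Pi''(0)$, and the latter is strictly positive since $u$ is not a coboundary for $T$. The bridge to variance is the Laplace-transform identity
\[
\bbE_{\mu_k}\big[e^{zS_{k,n}u}\big] = \la_{k,n}(z)\,C_{k,n}(z),
\]
where, by (\ref{Exponential convergence}) and Theorem \ref{RPF SDS}(ii), $C_{k,n}(z)$ is analytic on a neighborhood of $0$ and bounded uniformly in $k, n$ away from $0$ and $\infty$. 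Differentiating $\log$ twice at $z=0$ and using Cauchy's formula to control $(\log C_{k,n})''(0)$ uniformly yields
\[
\mathrm{var}_{\mu_k}(S_{k,n}u) = \sum_{j=k}^{k+n-1}\Pi_j''(0) + O(1),
\]
uniformly in $k, n$. Combined with $\Pi_j''(0) \geq \sigma^2/2$ (valid once $\ve_0$ is small), we conclude $\mathrm{var}_{\mu_k}(S_{k,n}u) \geq (\sigma^2/2) n - O(1) \geq \del_0 n$ for sufficiently large $n$.

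The main obstacle I anticipate is in part (ii): extracting uniform-in-$k,n$ bounds on the correction factor $C_{k,n}(z)$ on a complex neighborhood of $0$, which requires combining (\ref{Exponential convergence}) with the uniform estimates of Theorem \ref{RPF SDS}(ii) carefully enough that Cauchy's formula produces second-derivative control independent of $n$ and $k$. In part (i), matching the abstract weak limit with the explicit H\"older series and promoting the $\mu_s$-a.e.\ equality to a pointwise one is the technical point, but both steps follow cleanly from the exponential decay in (\ref{ExpDec}).
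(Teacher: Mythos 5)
Your overall route coincides with the paper's: part (i) is the Gordin-type argument via weak $L^2$ subsequential limits plus a diagonal extraction, and part (ii) combines Theorem \ref{stability} with the pressure-function expansion of the variance; the latter expansion is exactly the content of Theorem \ref{MomThm.0}, which the paper simply invokes, and your uniform control of the correction factor $C_{k,n}(z)$ is the same estimate the paper carries out (in its Berry--Esse\'en section) from (\ref{Exponential convergence}) and (\ref{UnifBound}). One small point in part (i): when passing to the limit in $S_{k,n}\bar u=\bar u_k+S_{k+1,n-1}\bar u\circ T_k$, weak continuity of $g\mapsto g\circ T_k$ is not enough on its own; you also need the single extra term $\bar u_{k+n_\ell}\circ T_k^{n_\ell}$ to tend to $0$ weakly, which is where (\ref{ExpDec}) enters (the paper tests against elements of the closed span of $\{\bar u_{k+d}\circ T_k^d:\,d\ge0\}$, which suffices since the weak limit lies in that subspace).

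The genuine gap is the final step of (i). The explicit series $Y_k=\sum_{m\ge1}\hat T_{k-m}^m\bar u_{k-m}$ does \emph{not} satisfy the coboundary identity by construction: telescoping only yields the transferred equation $Y_k-\hat T_{k-1}Y_{k-1}=\hat T_{k-1}\bar u_{k-1}$, because $(\hat T_j^m g)\circ T_k\neq \hat T_j^{m-1}g$ in general ($\hat T_k$ applied after composition is a conditional expectation, not the identity). In general $\bar u_k-(Y_{k+1}\circ T_k-Y_k)$ is a nonzero martingale difference, and it is precisely the boundedness of the variances --- i.e.\ the existence of the abstract $L^2$ solution --- that kills it. So the $\mu_k$-a.e.\ identity you constructed for the weak limit $-V_k$ cannot simply be transported to the series by continuity and full support; you must first show the two candidate solutions differ by a trivial cocycle. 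The paper closes this by noting that the weak limit and the series satisfy the \emph{same} transferred recursion, so their difference $d_k$ obeys $d_{k+1}=\hat T_k d_k$; since $\hat T_k$ is the adjoint of the isometry $g\mapsto g\circ T_k$ and $\hat T_k(v\circ T_k)=v$, this forces $d_{k+1}\circ T_k=d_k$, whence the series also satisfies $u_k-\mu_k(u_k)=Y_{k+1}\circ T_k-Y_k$ $\mu_k$-a.s., and only then does the full-support property of $\mu_k$ upgrade the identity to every point of $\cE_k$. With that comparison argument inserted, your proof matches the paper's.
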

Suppose that $\cX_j=M$ are all the same  Riemannian manifold, and that the maps $T_j$ satisfy the conditions from \cite{castro}. Assume also that  all of the $T_j$'s  lie a sufficiently small  $C^1$-ball of a single map which satisfies these conditions, and  that the functions $f_j$ and $u_j$ all lie in a sufficiently small  ball (in the $C^1$-norm) around $f$ and $u$, respectively.  Then (\ref{ve }) is satisfied with the norm $\|g\|=\sup|g|+\sup|Dg|$ (i.e. we take $\al=1$, see Proposition 5.3  in \cite{castro}). We note that the size of the neighborhoods of $T,f$ and $u$ which is needed depends only on the initial parameters and on the limit $\sig_u^2=\lim_{n\to\infty}\text{Var}_\mu(\sum_{j=0}^{n-1}u\circ T^j)>0$, where $\mu=\mu_{T,f}$ is the appropriate $T$-invariant Gibbs measure corresponding to the function $f$. In fact, it is possible to give a precise formula for the radius of the balls around $T,f$ and $u$.
 Another example is intervals maps with finite number of monotonicity intervals which do not depend on $j$, where on each one of them each $T_j$ and $T$ is either expanding or contracting. If each $T_j$ is obtained from $T$ by  perturbing each inverse branch of $T$ in some H\"older norm, and $f_j$ and $u_j$ are small perturbations of $f$ and $u$ in this norm,  then (\ref{ve 0}) will hold true in the appropriate H\"older norm. Similar examples can be given in rectangular regions in $\bbR^d$ for $d>1$.

Next, for any $n\geq1$ set 
\[
\sig_{0,n}=\sqrt{\text{var}_{\mu_0}(S_{0,n}u)}
\]
and let $\textbf{x}_0$ be a $\cE_0$-valued random variable which is distributed according to $\mu_0$.
Then, relying on  (\ref{Exponential convergence}) and the arguments in \cite{Conze}, it follows that
 $\frac{\bar S_{0,n}u(\textbf{x}_0)-\mu_0(S_{0,n}u)}{\sig_{0,n}}$ converges in distribution towards  the standard normal
 law, when $\sig_{0,n}^2$  converges to $\infty$ as $n\to\infty$ (this essentially means that the quadratic variations of the martingales constructed in the proof of Theorem \ref{Mart thm} converge as $n\to\infty$, after a proper normalization).
When the variances grow faster than $n^{\frac23}$ then we are able to prove a
self normalized Berry-Esseen theorem:

\subsubsection{Berry-Eseen\'en and a local CLT}
\begin{theorem}\label{Berry-Esseen}
Suppose that 
\[
\lim_{n\to\infty}{\sig_{0,n}}{n^{-\frac13}}=\infty
\]
and set $\bar S_{0,n}u=S_{0,n}u-\mu_0(S_{0,n}u)$.
Then there exists a constant $C>0$ so that for any $n\geq 1$ and $r\in\bbR$,
\begin{equation}
\Big|\mu_0\{x\in\cE_0: \bar S_{0,n}u(x)\leq r\sig_{0,n}\}-\frac1{\sqrt {2\pi}}\int_{-\infty}^{r}e^{-\frac12 t^2}dt\Big|\leq Cn\sig_{0,n}^{-3}.
\end{equation}
In particular, when $\sig_{0,n}^2$ grows linearly fast in $n$ the above left hand side does not exceed $C_1n^{-\frac12}$ for some constant $C_1$. 
\end{theorem}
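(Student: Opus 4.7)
My plan is to follow the standard characteristic-function route to the Berry-Esseen bound, substituting the sequential RPF theorem for a single-operator spectral gap. Let $\phi_n(t):=\int_{\cE_0}\exp(it\bar S_{0,n}u(x)/\sig_{0,n})d\mu_0(x)$. Esseen's smoothing inequality with $T:=c_0\sig_{0,n}^3/n$ reduces the task to bounding $\int_{-T}^T|\phi_n(t)-e^{-t^2/2}|/|t|\,dt$ by $O(n\sig_{0,n}^{-3})$, so that the Esseen residual $1/T$ is of the correct order; the hypothesis $\sig_{0,n}\gg n^{1/3}$ makes $T\to\infty$. Moreover the exponential mixing (\ref{ExpDec}) forces $\sig_{0,n}^2\leq Cn$, hence $T/\sig_{0,n}\leq c_0 C$, and choosing $c_0$ small the entire range $|t|\leq T$ will lie inside the analyticity neighborhood of $0$.

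First, I express $\phi_n(t)$ via the RPF data. The inductive identity $\cL_{is}^{0,n}(h_0^{(0)})=\cL_0^{0,n}(h_0^{(0)}e^{isS_{0,n}u})$, obtained from $\cL_{is}^{(j)}g=\cL_0^{(j)}(ge^{isu_j})$ and $\cL_0^{(j)}((\vf\circ T_j)\cdot g)=\vf\cdot\cL_0^{(j)}g$, combined with $d\mu_0=h_0^{(0)}d\nu_0^{(0)}$, the dual relation $(\cL_0^{(j)})^*\nu_{j+1}^{(0)}=\la_j(0)\nu_j^{(0)}$, and the exponential convergence (\ref{Exponential convergence}), yields
\[
E_{\mu_0}[e^{isS_{0,n}u}]=\frac{\la_{0,n}(is)}{\la_{0,n}(0)}\bigl(\tilde G_n(is)+O(\del^n)\bigr),\quad\tilde G_n(z):=\nu_0^{(z)}(h_0^{(0)})\nu_n^{(0)}(h_n^{(z)}).
\]

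Next I Taylor-expand $\Pi_j(z):=\log\la_j(z)$. The uniform bound (\ref{UnifBound}) together with $|\la_j(z)|\geq c$ supply, via Cauchy's estimates on the disc $U$, uniform-in-$j$ bounds on all Taylor coefficients, so for $|z|$ small,
\[
\sum_{j=0}^{n-1}\Pi_j(z)=zA_n+\tfrac12 z^2 B_n+O(n|z|^3),\quad A_n:=\sum_j\Pi_j'(0),\ B_n:=\sum_j\Pi_j''(0).
\]
Differentiating the eigenrelation $\cL_z^{(j)}h_j^{(z)}=\la_j(z)h_{j+1}^{(z)}$ at $z=0$, integrating against $\nu_{j+1}^{(0)}$ and telescoping give $A_n=\mu_0(S_{0,n}u)+\be_0-\be_n$, with $\be_j:=\nu_j^{(0)}(\partial_zh_j^{(z)}|_{z=0})$; the parallel computation gives $\tilde G_n'(0)=\al_0+\be_n$ with $\al_j:=(\partial_z\nu_j^{(z)})|_{z=0}(h_j^{(0)})$, and the normalization $\nu_j^{(z)}(h_j^{(z)})\equiv 1$ yields $\al_j+\be_j=0$. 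All the $O(|t|/\sig_{0,n})$ bias terms therefore cancel identically. A parallel second-order argument, closed using (\ref{ExpDec}) to resum cross-covariances, identifies $B_n=\sig_{0,n}^2+O(1)$. Substituting $z=it/\sig_{0,n}$ produces, uniformly on $|t|\leq T$,
\[
\phi_n(t)=e^{-t^2/2}\bigl(1+O(t^2\sig_{0,n}^{-2}+n|t|^3\sig_{0,n}^{-3})\bigr)+O(\del^n).
\]

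Dividing by $|t|$ and integrating against the Gaussian gives $\int e^{-t^2/2}(|t|\sig_{0,n}^{-2}+nt^2\sig_{0,n}^{-3})dt=O(\sig_{0,n}^{-2}+n\sig_{0,n}^{-3})=O(n\sig_{0,n}^{-3})$ using $\sig_{0,n}\leq C\sqrt n$, while the $O(\del^n)$ contribution is harmless after cutting a neighborhood of $t=0$ where one uses $|\phi_n(t)-e^{-t^2/2}|=O(t^2)$ from smoothness. Combined with the Esseen residual this proves the theorem. The step I expect to be hardest is the second-order identification $B_n=\sig_{0,n}^2+O(1)$, since it requires resumming all covariance contributions generated by the second derivative of the sequential eigenequation and genuinely uses the exponential mixing (\ref{ExpDec}) together with the uniformity of the RPF estimates from Theorem \ref{RPF SDS}.
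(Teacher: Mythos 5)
Your overall route (Esseen smoothing with $T\asymp\sig_{0,n}^3/n$, the RPF representation $\mu_0(e^{zS_{0,n}u})=\la_{0,n}(z)\la_{0,n}(0)^{-1}(\tilde G_n(z)+O(\del^n))$, Taylor expansion of $\sum_j\Pi_j$, and the identification $B_n=\sig_{0,n}^2+O(1)$ via the asymptotic moment estimates) is essentially the paper's proof; the only structural difference is that the paper first normalizes so that $\cL_0^{(j)}\textbf{1}=\textbf{1}$, $h_j^{(0)}\equiv\textbf{1}$, $\nu_j^{(0)}=\mu_j$, which makes your first-order cancellation $A_n+\tilde G_n'(0)=\mu_0(S_{0,n}u)$ automatic ($\varphi_{0,n}'(0)=0$ there), whereas you verify it by hand from the eigenrelations. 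Your cancellation computation is correct, and your observation that $\sig_{0,n}^2\leq Cn$ (so that $T/\sig_{0,n}$ stays in the analyticity disc) matches the paper's choice $s_n=\sig_{0,n}^2/(2c_0n)$.

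There is, however, one step that is false as written. The claimed uniform expansion
\[
\phi_n(t)=e^{-t^2/2}\bigl(1+O(t^2\sig_{0,n}^{-2}+n|t|^3\sig_{0,n}^{-3})\bigr)+O(\del^n),\qquad |t|\leq T,
\]
cannot be deduced from the Taylor expansion of the exponent alone: writing $\sum_j\Pi_j(it/\sig_{0,n})=-t^2/2+E_n(t)$ with $|E_n(t)|\leq c\bigl(n|t|^3\sig_{0,n}^{-3}+t^2\sig_{0,n}^{-2}\bigr)$, one has at the edge $|t|=T=c_0\sig_{0,n}^3/n$ that $n|t|^3\sig_{0,n}^{-3}=c_0^3\sig_{0,n}^6n^{-2}$, which is of order $n$ when $\sig_{0,n}^2\asymp n$; so $e^{E_n(t)}-1$ is not $O(E_n(t))$ and the displayed formula is meaningless in the outer part of the range. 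The missing ingredient is the real-part estimate $\Re\bigl(\sum_{j=0}^{n-1}\Pi_j(it/\sig_{0,n})\bigr)\leq-qt^2$ for $|t|\leq T$ (the paper's (\ref{Re press1})), which \emph{does} follow from your expansion precisely because $c_0$ is small, since then $n|t|^3\sig_{0,n}^{-3}\leq c_0\,t^2(1+o(1))$ on this range. With that bound in hand one applies the mean value theorem to $e^z$ on the segment from $-t^2/2$ to $\sum_j\Pi_j(it/\sig_{0,n})$ and obtains $|e^{\sum_j\Pi_j(it/\sig_{0,n})}-e^{-t^2/2}|\leq|E_n(t)|e^{-qt^2}$, and separately $|\tilde G_n(it/\sig_{0,n})-1|\leq B_1t^2\sig_{0,n}^{-2}$ multiplied by $e^{-qt^2}$; only then does your final integration $\int_{-T}^{T}e^{-qt^2}(|t|\sig_{0,n}^{-2}+nt^2\sig_{0,n}^{-3})\,dt=O(n\sig_{0,n}^{-3})$ go through. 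This is exactly how the paper bounds its terms $I_1(n,t)$ and $I_2(n,t)$. The fix is routine and all your ingredients suffice, but as stated the key display is incorrect and the argument for the range $|t|$ comparable to $T$ is absent.
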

Note that we obtain here optimal convergence rate (i.e. rate of order $n^{-\frac12}$) in the circumstances of Theorem \ref{Variance SDS} (ii).

For the sake of completeness, we will formulate the following theorem whose proof is carried out exactly as in \cite{HafEdge}:
\begin{theorem}\label{MomThm.0}

(i) By possibly decreasing $r$, where $r$ comes from Theorem \ref{stability}, we can define analytic functions $\Pi_j:B(0,r)\to\bbC,\,j\in\bbZ$
so that 
\[
\Pi_j(0)=0,\, \la_j(z)/\la_j(0)=e^{\Pi_j(z)}\, \text{ and }\, 
|\Pi_j(z)|\leq c_0
\]
for any $z\in B(0,r)$, for some constant $c_0$ which does not depend on $j$
and $z$.


(ii) There exists a constant $R_1$ so that for any integer $j$ and $n\geq1$,
\[
\big|\mu_j(S_{j,n}u)-\sum_{m=0}^{n-1}\Pi'_{j+m}(0)\big|\leq R_1
\]
where $\Pi'_{j+m}(0)$ is the derivative of $\Pi_{j+m}$ at $z=0$.

(iii) Suppose that $\mu_j(u_j)=0$ for any integer $j$.
For any $k\geq 2$, $s\in\bbZ$ and $n\geq1$, set
\[
\gam_{j,k,n}=n^{-[\frac k2]}\int_{\cE_j} (S_{j,n}u(x))^kd\mu_j(x)
\,\text{ and }\, \Pi_{j,k,n}=n^{-1}\sum_{m=0}^{n-1}\Pi_{j+m}^{(k)}(0)
\]
where $\Pi_{j+m}^{(k)}(0)$ is the $k$-th derivative of the function $\Pi_{j+m}$ at $z=0$.
Then there exist constants $R_k>0$, $k\geq2$,
so that for any even $k\geq 2$,
\begin{equation}\label{Even mom}
\max\Big(\big|\gam_{j,k,n}-C_k(\gam_{j,2,n})^{\frac k2}\big|,\,
\big|\gam_{j,k,n}-C_k(\Pi_{j,2,n})^{\frac k2}\big|\Big)
\leq \frac{R_k}n
\end{equation}
where $C_k=2^{-\frac k2}(\frac{k}{2}!)^{-1}k!$, while with $D_k=\frac{k!}{3!}2^{-\frac12(k-3)}(\frac{k-3}{2}!)^{-1}$
for any odd $k\geq 3$,
\[
\max\big(\big|\gam_{j,k,n}-D_k(\gam_{j,2,n})^{\frac {k-3}2}\gam_{j,3,n}\big|,\,\big|\gam_{j,k,n}-D_k(\Pi_{j,2,n})^{\frac {k-3}2}\Pi_{j,3,n}\big|\Big)\leq \frac{R_k}n.
\]
\end{theorem}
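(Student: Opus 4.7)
Part (i) is a direct consequence of the quantitative information in Theorem \ref{RPF SDS}. Indeed, $\la_j$ is analytic on $U$ with $c\leq|\la_j(z)|\leq C$ uniformly in $j$ and $z$, so Cauchy's inequality on $\bar B(0,2r)\subset U$ yields a uniform modulus of continuity for $\la_j$ at $0$. Shrinking $r$ in a manner depending only on the initial parameters, we can arrange $|\la_j(z)/\la_j(0)-1|<1/2$ on $B(0,r)$ for every $j\in\bbZ$, and then the principal branch of the logarithm gives $\Pi_j(z):=\log(\la_j(z)/\la_j(0))$, which is analytic, vanishes at $z=0$, and is uniformly bounded in $j,z$.

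For part (ii) I would first derive the moment generating function representation. Writing $\cL_z^{(k)}g=\cL_0^{(k)}(e^{zu_k}g)$ and iterating gives $\cL_z^{j,n}g=\cL_0^{j,n}(e^{zS_{j,n}u}g)$; applying the dual eigenrelation $(\cL_0^{j,n})^*\nu_{j+n}^{(0)}=\la_{j,n}(0)\nu_j^{(0)}$ with $g=h_j^{(0)}$ and using $d\mu_j=h_j^{(0)}d\nu_j^{(0)}$ yields
\[
\mu_j\bigl(e^{zS_{j,n}u}\bigr)=\la_{j,n}(0)^{-1}\nu_{j+n}^{(0)}\bigl(\cL_z^{j,n}h_j^{(0)}\bigr)=e^{P_{j,n}(z)}A_{j,n}(z),
\]
where $P_{j,n}(z):=\sum_{m=0}^{n-1}\Pi_{j+m}(z)$ and $A_{j,n}(z):=\la_{j,n}(z)^{-1}\nu_{j+n}^{(0)}(\cL_z^{j,n}h_j^{(0)})$. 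Combining (\ref{Exponential convergence}) with (\ref{UnifBound}) shows that $A_{j,n}$ is analytic on $B(0,r)$, uniformly bounded in $j,n,z$, and $O(\del^n)$-close to $\nu_j^{(z)}(h_j^{(0)})\nu_{j+n}^{(0)}(h_{j+n}^{(z)})$, which equals $1$ at $z=0$. After a further shrinkage of $r$, $A_{j,n}$ is bounded away from $0$ uniformly, so $\log A_{j,n}$ is analytic on $B(0,r)$ with $\|\log A_{j,n}\|_\infty$ bounded uniformly in $j,n$. Differentiating $\log\mu_j(e^{zS_{j,n}u})=P_{j,n}(z)+\log A_{j,n}(z)$ at $z=0$ and bounding $(\log A_{j,n})'(0)$ by Cauchy's inequality on a disk of radius $r$ gives part (ii).

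For part (iii), the hypothesis $\mu_j(u_j)=0$ makes $S_{j,n}u$ centered under $\mu_j$, so the moment-cumulant identity reads
\[
\mu_j(S_{j,n}u^k)=\sum_{\pi}\prod_{B\in\pi}\ka_{j,n,|B|},
\]
with $\pi$ ranging over set partitions of $\{1,\ldots,k\}$ whose blocks have size $\geq 2$ and $\ka_{j,n,\ell}=\frac{d^\ell}{dz^\ell}\log\mu_j(e^{zS_{j,n}u})|_{z=0}$. Taylor expanding the identity from the previous paragraph and controlling higher derivatives of $\log A_{j,n}$ by iterated Cauchy estimates gives $\ka_{j,n,\ell}=n\Pi_{j,\ell,n}+O(1)$, uniformly in $j$ and $n$. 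For even $k$ the pair partitions dominate: their number is $C_k$, their contribution is $C_k\ka_{j,n,2}^{k/2}=C_k(n\Pi_{j,2,n})^{k/2}+O(n^{k/2-1})$, and all remaining partitions contribute $O(n^{k/2-1})$; dividing by $n^{k/2}$ and using $\gam_{j,2,n}=\Pi_{j,2,n}+O(n^{-1})$ yields both bounds in (\ref{Even mom}). For odd $k$ the dominant partitions have exactly one block of size $3$ and $(k-3)/2$ blocks of size $2$; their number is $D_k$ and the analogous manipulation, together with $\gam_{j,3,n}=\Pi_{j,3,n}+O(n^{-1})$, produces the odd-$k$ estimate.

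The main obstacle is purely one of \emph{uniformity}: every bound above must be independent of $j$ and $n$. This is exactly where the uniform RPF bounds (\ref{UnifBound}) and the uniform exponential convergence (\ref{Exponential convergence}) from Theorem \ref{RPF SDS} are indispensable, as they are what makes $\log A_{j,n}$ analytic with all derivatives controlled on a fixed disk; once this uniformity is secured, the remaining moment-cumulant combinatorics is routine.
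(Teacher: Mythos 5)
Your proposal is correct and follows essentially the same route as the paper: part (i) is obtained from the uniform two-sided bounds on $\la_j$ exactly as indicated at the start of Section \ref{LimSec}, and your factorization $\mu_j(e^{zS_{j,n}u})=e^{\sum_{m=0}^{n-1}\Pi_{j+m}(z)}A_{j,n}(z)$ with $A_{j,n}$ uniformly bounded, analytic and equal to $1$ at $z=0$ is precisely the function $\varphi_{j,n}$ the paper constructs via (\ref{Fib CLT rel 1})--(\ref{var phi bound}), after which the cumulant/partition combinatorics is the argument of \cite{HafEdge} to which the paper defers. No gaps; the uniformity you emphasize is indeed the only delicate point and is supplied by (\ref{UnifBound}) and (\ref{Exponential convergence}).
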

An immediate consequence of Theorem \ref{MomThm.0} is that $|\gam_{j,k,n}|$ is uniformly bounded in $j$ and $n$, for each $k$. Using the Markov inequality, together with (\ref{Even mom}) with $k=4$, we obtain that for any $\ve>0$ and $j\in\bbZ$,
\[
\mu_j\big\{x:\,|S_{j,n}u(x)-\mu_j(S_{j,n}u)|\geq\ve n\big\}\leq Cn^{-2}\ve^{-4}
\]
for some $C>0$. Therefore, by the Borel-Cantelli Lemma,
\[
\lim_{n\to\infty}\frac{S_{j,n}u(x)-\mu_j(S_{j,n} u)}{n}=0,\,\,\,\mu_j-\text{a.s.}
\]
namely, a sequential version of the strong law of large numbers holds true.

When $T_j,f_j,u_j$ are chosen at random according to some type of (not necessarily stationary) sequence of random variables, then, in Section \ref{NonStat}
we obtain  almost sure converges rate of the form
\[
\Big|\frac1n\mu_j(S_{j,n}u)-p\Big|\leq \frac{R_{1,\om}}{n}
\]
where $p$ is some constant and $R_{1,\om}$ is some random variable.
When $\mu_j(u_j)=0$ we also derive that for any $k\geq2$,
\begin{equation}\label{MomRateTemp}
\big|\gam_{j,k,n}-\gam_k\big|\leq R_{\om,k}n^{-\frac12}\ln n
\end{equation}
where $R_{\om,k}$ is some random variable and $\gam_k$ is some constant.
 Using (\ref{MomRateTemp}) with $k=2$, when $\sig^2=\gam_2>0$,  
and almost optimal convergence rate in the central limit theorem of the form
\begin{equation}\label{BE temp}
\sup_{s\in\bbR}\big|\mu_0\{x\in\cE_0: \bar S_{0,n}u(x)\leq s\sqrt n\}-\frac1{\sqrt{2\pi\sig^2}}\int_{-\infty}^s e^{-\frac{t^2}{2\sig^2}}dt\big|\leq 
c_\om n^{-\frac12}\ln n
\end{equation}
follows (see Section \ref{ConvR}).

%

Next, as usual, in order to present the local central limit theorem we will distinguish between two 
cases. We will call the case a lattice one if the functions $u_j$ take values at some lattice of the form 
$h\bbZ:=\{hk:\,k\in\bbZ\}$ for some $h>0$. We will call the case a non-lattice if there exist no $h$ which satisfy the latter lattice condition. In the non-lattice case set $h=0$ and $I_h=\bbR\setminus\{0\}$, while 
in the lattice case set $I_h=(-\frac{2\pi}h,\frac{2\pi}h)\setminus\{0\}$.

\begin{theorem}\label{Gen LLT}
Suppose that for any compact interval $J\subset I_h$ we have
\begin{equation}\label{A3}
\lim_{n\to\infty}\sqrt n\sup_{t\in J}\|\cL_{it}^{0,n}\|/\la_{0,n}(0)=0
\end{equation}
and that there exists $c_0>0$ so that $\sig_{0,n}^2=\text{var}_{\mu_0}(S_{0,n}u)\geq c_0n$ for any sufficiently large $n$. Then for any continuous function $g:\bbR\to\bbR$ with compact support we have
\begin{eqnarray*}
\lim_{n\to\infty}\sup_{r\in R_h}\Big|\sqrt{2\pi}\sig_{0,n}\int g(S_{0,n}u(x)-\mu_0(S_{0,n}u)-r)d\mu_0(x)-\\\Big(\int_{-\infty}^{\infty} g(t)dm_h(t)\Big)e^{-\frac{r^2}{2\sig_{0,n}^2}}\Big|=0
\end{eqnarray*}
where in the non-lattice case $R_h=\bbR$ and $m_h$ is the Lebesgue measure, while in the lattice case $R_h=h\bbZ=\{hk:\,k\in\bbZ\}$ and $m_h$ is the measure assigning unit mass to each one of the members of $R_h$.
When the limit $\sig^2=\lim_{n\to\infty}\frac1n\sig_{0,n}^2$ exists then we can replace $\sig_{0,n}$ with $\sig\sqrt n$.
\end{theorem}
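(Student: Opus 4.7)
The plan is to base the whole argument on the spectral identity
\begin{equation*}
\mu_0\bigl(e^{it S_{0,n}u}\bigr)=\la_{0,n}(0)^{-1}\nu_n^{(0)}\bigl(\cL_{it}^{0,n}h_0^{(0)}\bigr),
\end{equation*}
obtained by iterating the dual relation $(\cL_0^{(j)})^*\nu_{j+1}^{(0)}=\la_j(0)\nu_j^{(0)}$ against $d\mu_0=h_0^{(0)}d\nu_0^{(0)}$ and using the elementary identity $\cL_0^{0,n}(h_0^{(0)} e^{it S_{0,n}u})=\cL_{it}^{0,n}h_0^{(0)}$. Replacing each $u_j$ by $u_j-\mu_j(u_j)$ leaves $\bar S_{0,n}u$ and $\sig_{0,n}^2$ unchanged and only multiplies $\la_j(z)$ by $e^{-z\mu_j(u_j)}$ (so \eqref{A3} is preserved), allowing us to assume $M_n:=\mu_0(S_{0,n}u)=0$. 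In the non-lattice case a standard sandwiching reduces the claim to functions $g$ whose Fourier transform $\hat g$ has compact support, while in the lattice case $g$ restricted to $h\bbZ$ is automatically a finite sum, so that $\hat g_L$ is a trigonometric polynomial. Combining Fourier inversion with the Gaussian identity $e^{-r^2/(2\sig_{0,n}^2)}=(\sig_{0,n}/\sqrt{2\pi})\int e^{-itr-t^2\sig_{0,n}^2/2}dt$ reduces the uniform-in-$r$ estimate to showing
\begin{equation*}
\sig_{0,n}\int \bigl|\hat g(t)\mu_0(e^{it S_{0,n}u})-\hat g(0)e^{-t^2\sig_{0,n}^2/2}\bigr|\,dt\longrightarrow 0,
\end{equation*}
where the integration range is $\bbR$ in the non-lattice case and $[-\pi/h,\pi/h]$ (via the corresponding Fourier series) in the lattice case.

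Next, I would split the integral at a small fixed $t_0>0$ with $[-t_0,t_0]\subset U$. On the inner piece $|t|\leq t_0$, Theorem \ref{RPF SDS}(iii) applied to $g=h_0^{(0)}$ gives
\begin{equation*}
\cL_{it}^{0,n}h_0^{(0)}=\la_{0,n}(it)\bigl(\nu_0^{(it)}(h_0^{(0)})h_n^{(it)}+O(\del^n)\bigr),
\end{equation*}
hence $\mu_0(e^{itS_{0,n}u})=(\la_{0,n}(it)/\la_{0,n}(0))\,a_n(it)+O(\del^n)$ where $a_n(it):=\nu_0^{(it)}(h_0^{(0)})\nu_n^{(0)}(h_n^{(it)})$ is analytic, uniformly bounded in $n$, and satisfies $a_n(0)=1$. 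Writing $\la_j(z)/\la_j(0)=e^{\Pi_j(z)}$ as in Theorem \ref{MomThm.0}(i) and Taylor expanding each $\Pi_j$ around $0$, combined with the identifications $\sum_{j=0}^{n-1}\Pi'_j(0)=O(1)$ (from Theorem \ref{MomThm.0}(ii) after centering) and $\sum_{j=0}^{n-1}\Pi''_j(0)=\sig_{0,n}^2+O(1)$ (from Theorem \ref{MomThm.0}(iii) with $k=2$), together with the vanishing of the linear term forced by $\mu_0(\bar S_{0,n}u)=0$, yields
\begin{equation*}
\mu_0(e^{itS_{0,n}u})=e^{-t^2\sig_{0,n}^2/2}\bigl(1+O(|t|^3 n+\del^n)\bigr),\qquad |t|\leq t_0.
\end{equation*}
Pairing this with $|\hat g(t)-\hat g(0)|=O(|t|)$ (a consequence of compact support of $g$) bounds the inner contribution by $O(\sig_{0,n}^{-1}+n\sig_{0,n}^{-3}+\sig_{0,n}\del^n)$, which vanishes since $\sig_{0,n}^2\geq c_0 n$.

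For the outer piece, let $M$ be so that $\mathrm{supp}(\hat g)\subset[-M,M]$ in the non-lattice case, or $M=\pi/h$ in the lattice case; the set $J:=\{t:\,t_0\leq |t|\leq M\}$ is a compact subset of $I_h$, so $|\mu_0(e^{itS_{0,n}u})|\leq C\|\cL_{it}^{0,n}\|/\la_{0,n}(0)$ combined with the non-characteristic assumption \eqref{A3} gives
\begin{equation*}
\sig_{0,n}\int_J|\hat g(t)|\cdot|\mu_0(e^{itS_{0,n}u})|\,dt\leq C\sig_{0,n}\sup_{t\in J}\|\cL_{it}^{0,n}\|/\la_{0,n}(0)=o(1),
\end{equation*}
using $\sig_{0,n}=O(\sqrt n)$, while the Gaussian tail $\sig_{0,n}\int_{|t|\geq t_0}e^{-t^2\sig_{0,n}^2/2}dt$ is exponentially small in $\sig_{0,n}$. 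The lattice case is handled identically, with Fourier inversion on $\bbR$ replaced by Fourier series on $[-\pi/h,\pi/h]$ and $R_h=h\bbZ$; the final remark that $\sig_{0,n}$ may be replaced by $\sig\sqrt n$ whenever $\sig^2=\lim\sig_{0,n}^2/n$ exists is immediate from $\sig_{0,n}/(\sig\sqrt n)\to 1$.

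The main obstacle is the inner-piece estimate, which requires simultaneously controlling (i) the analytic-perturbation error of order $\del^n$ in Theorem \ref{RPF SDS}(iii), (ii) the cubic Taylor remainder of $\sum_j\Pi_j$ \emph{uniformly} in $j$, delivered by the uniform bound $|\Pi_j(z)|\leq c_0$ from Theorem \ref{MomThm.0}(i) together with Cauchy estimates for the third derivative, and (iii) the $O(1)$ discrepancy between $\sum_j\Pi''_j(0)$ and $\sig_{0,n}^2$ from Theorem \ref{MomThm.0}(iii). The hypothesis $\sig_{0,n}^2\geq c_0 n$ is then exactly what is needed to absorb the cubic remainder $O(|t|^3 n)$ after multiplication by $\sig_{0,n}$, and \eqref{A3} is exactly tailored to kill the outer contribution after the same multiplication.
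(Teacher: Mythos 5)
Your proposal is correct and follows essentially the same route as the paper: the paper's proof of Theorem \ref{Gen LLT} is exactly the classical Fourier-inversion argument (delegated to Theorem 2.2.3 of \cite{book}), reduced to the Gaussian characteristic-function bound (\ref{Goal}) on a fixed neighborhood of $0$ via the pressure expansion (\ref{Re press0}), with assumption (\ref{A3}) killing the contribution of $t$ in compact subsets of $I_h$; you have simply written out the details the paper leaves to the reference. One point to tighten: the displayed expansion $\mu_0(e^{itS_{0,n}u})=e^{-t^2\sig_{0,n}^2/2}\bigl(1+O(|t|^3n+\del^n)\bigr)$ cannot hold in multiplicative form on the whole range $|t|\le t_0$, since for $n^{-1/3}\ll|t|\le t_0$ the relative error $|t|^3n$ blows up. What you actually need (and what your final integrated bound $O(n\sig_{0,n}^{-3})$ tacitly uses) is the additive estimate $\bigl|\mu_0(e^{itS_{0,n}u})-e^{-t^2\sig_{0,n}^2/2}\bigr|\le Ce^{-qnt^2}(|t|^3n+t^2)+C\del^n$, obtained from the mean value theorem together with $\Re\sum_j\Pi_j(it)\le -qnt^2$ — precisely the bound on $I_2(n,t)$ in the paper's Berry--Esse\'en proof; alternatively, split the inner region at $|t|=n^{-1/3}$ and use only the Gaussian upper bound on the intermediate piece. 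With that correction the argument closes, and your treatment of the outer region and of the lattice case matches the paper's.
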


Next, let $\cY_1,...,\cY_{m_0}$ be compact metric spaces, set  $\cY_{m_0+1}:=\cY_1$ and let 
$S_j:\cY_j\to\cY_{j+1},\,j=1,2,...,m_0$ be maps so that  Assumption \ref{Ass maps} holds true with $\cE'_j=\cY_{k_j}$ and $T'_j=S_{k_j}$ in place of $\cE_j$ and $T_j$, respectively, where $k=k_j=j\text{ mod }m_0$. Let
$r_i,v_i:\cY_i\to\bbR$ be H\"older continuous functions  whose $\|\cdot\|$ norms are bounded by $B$.
Set $S=S_{m_0}\circ\cdots\circ S_2\circ S_1$ and for each real $t$
let the transfer operator $\textbf{L}_{it}$ be defined by 
\[
\textbf{L}_{it}g(x)=\sum_{y\in S^{-1}\{x\}}e^{\sum_{j=1}^{m_0}r_j(y)+it\sum_{j=1}^{m_0}v_j(y)}g(y).
\]
In Section \ref{LLT sec} we will show that (\ref{A3}) holds true under the following
\begin{assumption}\label{GenPer}
The the spectral radius of $\textbf{L}_{it}$ is strictly less than $1$ for any $t\in I_h$. Moreover,
for any compact interval $J\subset I_h$ there exists $\del_0\in(0,1)$ so that for any constant $B_J>0$ and a sufficiently large $s$ we have 
\begin{equation}\label{GenPerEq}
\lim_{n\to\infty}\frac{|\{0\leq m<n:\,B_J\|\cL_{it}^{m,sm_0}-\textbf{L}_{it}^s\|<1-\del_0\,\,\,\forall t\in J\}|}{\ln n}=\infty
\end{equation}
where $|\Gam|$ is the cardinality of a finite set $\Gam$.
\end{assumption}
Note that in Assumption \ref{GenPer} there is an underlying assumption that $\cE_{m}=\cY_1$ for any $m$ so that $B_J\|\cL_{it}^{m,sm_0}-\textbf{L}_{it}^s\|<1-\del_0$. 
In the non-lattice case, the condition about  the spectral radius of $\textbf{L}_{it}$ means that the function $\sum_{j=1}^{m_0}v_j\circ S_{j-1}\circ\cdots\cdot S_2\circ S_1$ is non-arithmetic (or aperiodic) with respect to the map $S$ defined above, while in the lattice case it means that $h$ satisfies a certain maximality condition with respect to this function (see \cite{GH} and \cite{HH}).

Assumption \ref{GenPer} holds true when $T_k,f_k,u_k$ are chosen at random according to some (not necessarily stationary) sequence of random variables, see Section \ref{NonStat} and a discussion at the end of this Section. 
Non random examples can be constructed as follows: assume that
\[
\|\cL_{it}^{a_k+im_0,m_0}-\textbf{L}_{it}\|\leq\del_k\al(t)
\]
for any sufficiently large $k$ and $0\leq i\leq m_0^{-1}(a_{k+1}-a_k-m_0)$, for some sequence $\del_k$ which converges to $0$ as $k\to\infty$ and a continuous function $\al(t)$, where $(a_k)_{k=1}^\infty$ is a sequence of natural numbers so that $\lim_{k\to\infty}(a_{k+1}-a_{k})=\infty$ and $a_k\leq c_1e^{c_2 k^r}$  for some $r\in(0,1)$ and $c_1,c_2>0$ and all natural $k$'s.


\subsubsection{Exponential concentration inequalities and moderate and large deviations}
Now we will discuss several large deviations type results. We begin with the following 
\begin{theorem}\label{Mart thm}
There exist  constants $C,C_1>0$, which depend only on the initial parameters, so that for any natural $n$ there is a martingale $\{M_j^{(n)}=	W_1^{(n)}+...+W_j^{(n)}:\,j\geq1\}$ whose differences $W_j^{(n)}$ are bounded by $C$, and 
\[
\|S_{0,n}u(\textbf{x}_0)-\bbE[S_{0,n}u(\textbf{x}_0)]-M_n^{(n)}\|_{L^\infty}\leq C_1
\]
where $\textbf{x}_0$ is a random member of $\cE_0$ which is distributed according to $\mu_0$. Moreover, for any $t\geq0$ we have
\[
\mu_0\{x\in\cX:\,|S_{0,n}u(x)-\mu_0(S_{0,n}u)|\geq t+C_1\}\leq 2e^{-\frac{t^2}{4nC}}.
\]
\end{theorem}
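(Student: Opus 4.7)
The plan is a Gordin-type martingale approximation built from the normalized transfer operator, followed by Azuma--Hoeffding. Introduce the normalized operator $\hat\cL^{(j)}: \cH_j\to\cH_{j+1}$ defined by
\[
\hat\cL^{(j)} g \;=\; \frac{\cL_0^{(j)}(g\, h_j^{(0)})}{\la_j(0)\, h_{j+1}^{(0)}},
\]
which is the $L^2$-dual of $g\mapsto g\circ T_j$ with respect to $\mu_j,\mu_{j+1}$, satisfies $\hat\cL^{(j)}\mathbf 1 = \mathbf 1$, and, being positive, is an $L^\infty$-contraction. Combining Theorem~\ref{RPF SDS}(iii) at $z=0$ with the uniform bounds on $h_j^{(0)}$ and $\la_j(0)$ from part (ii) gives
\[
\|\hat\cL^{j,m} g - \mu_j(g)\|_\infty \;\le\; A'\,\|g\|_\al\, \del^m,
\]
with $A',\del$ independent of $j$.

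Next, set $\cF_\ell := \sigma(T_0^\ell)$ (a decreasing filtration on $\cE_0$) and let $\cG_k := \cF_{n-k}$, which is increasing. Writing $\bar u_j := u_j - \mu_j(u_j)$, the duality above yields $E_{\mu_0}[\bar u_j\circ T_0^j \mid \cF_\ell]=(\hat\cL^{j,\ell-j}\bar u_j)\circ T_0^\ell$ for $j\le\ell$, so
\[
E_{\mu_0}[\bar S_{0,n}u\mid\cG_k] \;=\; \sum_{j=n-k}^{n-1}\bar u_j\circ T_0^j \;+\; \Bigl(\sum_{j=0}^{n-k-1}\hat\cL^{j,n-k-j}\bar u_j\Bigr)\circ T_0^{n-k}.
\]
Define
\[
M_k^{(n)} \;:=\; E_{\mu_0}[\bar S_{0,n}u\mid \cG_k] - E_{\mu_0}[\bar S_{0,n}u\mid\cG_0],
\]
a $\cG_k$-martingale starting at $0$. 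A direct telescoping identifies the increments as $W_k^{(n)}=(B_\ell - \hat\cL^{(\ell)}(B_\ell)\circ T_\ell)\circ T_0^\ell$ with $\ell=n-k$ and $B_\ell=\bar u_\ell + \sum_{j=0}^{\ell-1}\hat\cL^{j,\ell-j}\bar u_j$.

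The exponential decay gives $\|B_\ell\|_\infty\le\|\bar u_\ell\|_\infty + A'\sum_{j<\ell}\|\bar u_j\|_\al\del^{\ell-j}$, bounded uniformly in $\ell$ by Assumption~\ref{f u ass}/\ref{Bound ass}. Since $\hat\cL^{(\ell)}$ is an $L^\infty$-contraction, $\|W_k^{(n)}\|_\infty\le 2\|B_{n-k}\|_\infty\le C$ uniformly in $k,n$. The same decay yields
\[
\bigl\|E_{\mu_0}[\bar S_{0,n}u\mid\cG_0]\bigr\|_\infty \;=\; \Bigl\|\sum_{j=0}^{n-1}(\hat\cL^{j,n-j}\bar u_j)\circ T_0^n\Bigr\|_\infty \;\le\; \sum_{j=0}^{n-1} A'\|\bar u_j\|_\al \del^{n-j} \;\le\; C_1
\]
uniformly in $n$. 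Since $M_n^{(n)}=\bar S_{0,n}u - E_{\mu_0}[\bar S_{0,n}u\mid\cG_0]$, this yields $\|\bar S_{0,n}u-M_n^{(n)}\|_\infty\le C_1$. Finally, Azuma--Hoeffding applied to the $\cG_k$-martingale with $|W_k^{(n)}|\le C$ gives $\mu_0(|M_n^{(n)}|\ge t)\le 2\exp(-t^2/(2nC^2))$; combining with the $C_1$-approximation and adjusting constants produces the stated Hoeffding bound.

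The main obstacle is purely bookkeeping: translating the exponential convergence in (\ref{Exponential convergence}) from $\cL_0^{j,n}$ into the $L^\infty$-decay of $\hat\cL^{j,n}\bar u_j$ toward $0$ (which requires the lower bound on $h_j^{(0)}$ in Theorem~\ref{RPF SDS}(ii)), and setting up the reverse-time filtration $\cG_k=\cF_{n-k}$ carefully so that the conditional expectations have a clean closed form in terms of $\hat\cL^{j,\ell-j}$. Once the normalized operator and its decay are in hand, the Gordin-style telescoping and Azuma--Hoeffding are standard.
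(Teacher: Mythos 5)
Your proposal is correct and follows essentially the same route as the paper: normalize the transfer operator so it fixes $\textbf{1}$, use the reverse-time increasing filtration $\sigma(T_0^{n-k})$ together with the duality $E_{\mu_0}[g\circ T_0^j\mid\sigma(T_0^\ell)]=(\hat\cL^{j,\ell-j}g)\circ T_0^\ell$ and the exponential decay from Theorem \ref{RPF SDS}(iii) to build a Gordin-type martingale with uniformly bounded increments, then apply Azuma--Hoeffding. Your Doob-martingale formulation $M_k^{(n)}=E[\bar S_{0,n}u\mid\cG_k]-E[\bar S_{0,n}u\mid\cG_0]$ produces, after telescoping, exactly the same increments as the paper's explicit coboundary decomposition $W_j=X_j+\sum_{s\ge j+1}E[X_s\mid\cF_{j+1}]-\sum_{s\ge j}E[X_s\mid\cF_j]$, so the two arguments coincide up to presentation.
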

By taking $t=\ve n$ we obtain estimates of the form
\[
\mu_0\{x\in\cX:\,|S_{0,n}u(x)-\mu_0(S_{0,n}u)|\geq \ve n\}\leq 2e^{-\frac{\ve^2 n}{16C}}
\]
for any $\ve>0$ and $n\geq1$ so that $\ve n\geq 2C_1$.




The following moderate deviations principle also holds true:
\begin{theorem}\label{Gen MD}
Set $\sig_n^2:=\text{var}_{\mu_0}(S_{0,n}u)$  and suppose that 
\[
\lim_{n\to\infty}\frac{\sig_{n}}{n^{\frac13+\ve_0}}=\infty
\]
for some $0<\ve_0<\frac 16$.
Let $(a_n)_{n=1}^\infty$ be a strictly increasing sequence of real numbers so that $\lim_{n\to\infty}a_n=\infty$
and $\lim_{n\to\infty}n^{-\ve_0}a_n=0$, 
and set $W_n=\frac{S_{0,n}u-\mu_0(S_{0,n}u)}{\sig_n a_n}$. Then for any Borel set $\Gam\subset\bbR$,
\begin{eqnarray}\label{MDP}
-\inf_{x\in\Gamma^0}I(x)\leq \liminf_{n\to\infty}\frac1{a_n^2}\mu_0\{x: W_n(x)\in\Gamma\}\,\,\text{and}\\
 \limsup_{n\to\infty}\frac1{a_n^2}\mu_0\{x: W_n(x)\in\Gamma\}\leq -\inf_{x\in\bar \Gamma}I(x)\nonumber
\end{eqnarray}
where $I(x)=-\frac{x^2}{2}$, $\Gam^o$ is the interior of $\Gamma$ and $\bar\Gamma$ is its closer. 
\end{theorem}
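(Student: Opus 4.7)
The plan is to apply the G\"artner--Ellis theorem at the moderate deviation speed $a_n^2$. It suffices to prove that for every $\la\in\bbR$,
\[
\La(\la):=\lim_{n\to\infty}\frac{1}{a_n^2}\log\bbE_{\mu_0}\bigl[e^{a_n^2\la W_n}\bigr]=\frac{\la^2}{2},
\]
because the Fenchel--Legendre transform of $\la\mapsto\la^2/2$ is the strictly convex, essentially smooth function $I(x)=\tfrac{x^2}{2}$, and the two bounds in (\ref{MDP}) (with $\log$ inserted) then follow from the standard G\"artner--Ellis argument applied to $(W_n)$ with speed $a_n^2$.

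To evaluate the MGF I exploit the complex RPF theorem. A direct computation gives $\cL_0^{0,n}(h_0\cdot e^{zS_{0,n}u})=\cL_z^{0,n}(h_0)$, so pairing with $\nu_n=\nu_n^{(0)}$ and iterating the duality $\nu_{j+1}(\cL_0^{(j)}g)=\la_j(0)\nu_j(g)$ yields
\[
\bbE_{\mu_0}\bigl[e^{zS_{0,n}u}\bigr]=\frac{\nu_n(\cL_z^{0,n}h_0)}{\la_{0,n}(0)}.
\]
Combining (\ref{Exponential convergence}) with the representation $\la_j(z)=\la_j(0)e^{\Pi_j(z)}$ from Theorem \ref{MomThm.0}(i), together with the uniform boundedness and non-vanishing of $A_n(z):=\nu_0^{(z)}(h_0)\nu_n(h_n^{(z)})$ (analytic by Theorem \ref{RPF SDS}(ii), with $A_n(0)=1$), I obtain, for $z$ in a fixed real neighborhood of $0$,
\[
\log\bbE_{\mu_0}\bigl[e^{zS_{0,n}u}\bigr]=\sum_{j=0}^{n-1}\Pi_j(z)+O(1).
\]

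Without loss of generality I replace $u_j$ by $u_j-\mu_j(u_j)$; this alters $S_{0,n}u$ and $\mu_0(S_{0,n}u)$ by the same constant, leaves $W_n$ and $\sig_n$ unchanged, and now $\mu_0(S_{0,n}u)=\sum_{j<n}\mu_j(u_j)=0$. Parts (ii) and (iii) of Theorem \ref{MomThm.0} then give $\sum_{j<n}\Pi_j'(0)=O(1)$ and $\sum_{j<n}\Pi_j''(0)=\sig_n^2+O(1)$. Set $z_n:=\la a_n/\sig_n$, which lies in the required real neighborhood of $0$ for all large $n$ since $a_n/\sig_n\to 0$. The analyticity of each $\Pi_j$ on $B(0,r)$ together with the uniform bound $|\Pi_j(z)|\leq c_0$ implies, via Cauchy estimates, that the third derivative $\Pi_j^{(3)}(0)$ is bounded uniformly in $j$; a third-order Taylor expansion therefore gives
\[
\sum_{j=0}^{n-1}\Pi_j(z_n)=\frac{\la^2a_n^2}{2}+O(z_n)+O(z_n^2)+O\bigl(n|z_n|^3\bigr).
\]
Dividing by $a_n^2$ I reach
\[
\frac{1}{a_n^2}\log\bbE_{\mu_0}\bigl[e^{a_n^2\la W_n}\bigr]=\frac{\la^2}{2}+O\Bigl(\tfrac{1}{a_n\sig_n}+\tfrac{1}{\sig_n^2}+\tfrac{na_n}{\sig_n^3}+\tfrac{1}{a_n^2}\Bigr),
\]
and the only delicate error is $na_n/\sig_n^3$, which the hypotheses $\sig_n\gg n^{1/3+\ve_0}$ and $a_n\ll n^{\ve_0}$ control via $na_n/\sig_n^3\ll n^{1+\ve_0}/n^{1+3\ve_0}=n^{-2\ve_0}\to 0$.

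The main obstacle is this last estimate: the variance lower bound $\sig_n\gg n^{1/3+\ve_0}$ is sharp in that it is exactly what absorbs the cubic Taylor remainder into the speed $a_n^2$, while the upper bound $a_n=o(n^{\ve_0})$ simultaneously keeps $z_n$ inside the complex neighborhood on which the RPF data are analytic. Once $\La(\la)=\la^2/2$ is established for every real $\la$, G\"artner--Ellis with this strictly convex and essentially smooth limit yields the moderate deviations bounds (\ref{MDP}) with rate function $I(x)=x^2/2$.
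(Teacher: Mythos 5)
Your proof is correct, but it follows a genuinely different route from the paper. The paper proves Theorem \ref{Gen MD} by the method of cumulants: it first derives the multiple correlation estimate (\ref{CumEst}) from the exponential decay (\ref{ExpDec}), feeds it into Lemma 14 of \cite{Douk} to get the cumulant bound $|\Gam_k(S_{0,n}u-\mu_0(S_{0,n}u))|\leq n(k!)^2c_0^k$, and then invokes Theorem 1.1 of \cite{Dor} with $\Del_n=n^{-3\ve_0}$ and $\gam=2$; the hypotheses $\sig_n\gg n^{1/3+\ve_0}$ and $a_n=o(n^{\ve_0})$ enter only through checking the normalization condition $n^{-k\ve_0}\leq n^{-3\ve_0(k-2)}$ there. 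You instead run G\"artner--Ellis at speed $a_n^2$, computing $\frac{1}{a_n^2}\log\bbE_{\mu_0}[e^{z_nS_{0,n}u}]$ with $z_n=\la a_n/\sig_n$ via the RPF representation $\log\mu_0(e^{zS_{0,n}u})=\sum_{j<n}\Pi_j(z)+O(1)$ and a uniform third-order Taylor expansion of the $\Pi_j$'s; your error analysis ($na_n/\sig_n^3\ll n^{-2\ve_0}$, etc.) is accurate, the limit $\La(\la)=\la^2/2$ is essentially smooth, and you correctly read the paper's $I(x)=-x^2/2$ and the missing $\ln$ in (\ref{MDP}) as typos. Your route is closer in spirit to the paper's proof of Theorem \ref{Gen: Var, LD, MD}(ii)--(iii) (where G\"artner--Ellis is invoked) and is self-contained given the RPF machinery, but it genuinely uses analyticity of the sequential pressures near $0$; the paper's cumulant route uses only correlation decay and yields, as a by-product, the Berry--Esseen-type bound (\ref{SecBE}) from \cite{Stat}. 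Both arguments deliver exactly the stated MDP.
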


The scaling sequence $(a_n)_{n=1}^\infty$  in theorem \ref{Gen MD} is not optimal even when $\sig_n^2$ grows linearly fast in $n$. In the following circumstances we can also derive more accurate moderate deviations principle together with a local large deviations principle:
\begin{theorem}\label{Gen: Var, LD, MD}
Suppose that for some $\del>0$ for any $z\in B(0,\del)$ the following limit
\[
\Pi(z)=\lim_{n\to\infty}\frac1n\sum_{j=0}^{n-1}\Pi_{j}(z)
\]
exists. Then 
$\Pi(z)$ is analytic on $z\in B(0,2\del)$ and we have
\begin{equation}\label{second exp}
\Pi(z)=\lim_{n\to\infty}\frac1n\ln\mu_0(e^{zS_{0,n}u})=\lim_{n\to\infty}\frac1n\ln l_n(\cL_z^{0,n})
\end{equation}
where under Assumption \ref{Ass maps} $l_n(g)=\sum_{i=1}^{L_n}g(x_{i,n})$ for any $g:\cE_n\to\bbC$, and the $x_{i,n}$'s come from Assumption \ref{Ass maps} (iii), while under either Assumption \ref{Ass maps 2} or Assumption \ref{Ass maps 3} $l_n(g)=g(x_n)$ for an arbitrary point $x_n$.
Moreover:

(i) The limit
\[
p=\lim_{n\to\infty}\frac{1}{n}\int S_{j,n}(x)d\mu_j(x)
\]
exists, and it does not depend on $j$, and when $\mu_j(u_j)=0$ for each $j$ the centralized asymptotic moments (defined in Theorem \ref{MomThm.0}) 
$\gam_k=\lim_{n\to\infty}\gam_{j,k,n}$ exist and they do not depend on $j$. Furthermore, with $\sig^2=\gam_2$ we have
$\gam_k=C_k\sig^k$ for even $k$'s, while for odd $k$'s we have 
$\gam_k=D_k\sig^{k-1}\gam_3$, where the $C_k$'s the the $D_k$'s are defined in Theorem \ref{MomThm.0}.
In addition, 
\[
p=\Pi'(0),\,  \gam_2=\sig^2=\Pi''(0) \,\text{ and }\,\gam_3=\Pi'''(0).
\] 
In particular, in the circumstances of Theorem \ref{Variance SDS} (ii) we have $\sig^2>0$.

(ii) Suppose that $\sig^2>0$. Then for any strictly increasing sequence $(b_n)_{n=1}^{\infty}$ of real numbers 
so that $\lim_{n\to\infty}\frac{b_n}n=0$ and $\lim_{n\to\infty}\frac{b_n}{\sqrt n}=\infty$ and a
Borel set $\Gam\subset\bbR$ (\ref{MDP}) holds true with 
\[
W_n=\frac{S_{0,n}u-\mu_0(S_{0,n}u)}{b_n}
\] 
and $I(x)=\frac{x^2}{2\sig^2}$.

(iii) Let $L(t)$ be the Legendre transform of $\Pi(t)$. Then, (\ref{MDP}) holds true for any Borel set 
 $\Gam\subset[\Pi'(-\del),\Pi'(\del)]$ with $W_n=\frac{S_{0,n}u-\mu_0(S_{0,n}u)}{n}$ and $I(t)=L(t)$.
\end{theorem}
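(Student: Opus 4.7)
My plan is to reduce all the conclusions to Theorem \ref{RPF SDS}, Theorem \ref{MomThm.0}, and the G\"artner--Ellis theorem. First, since Theorem \ref{MomThm.0}(i) gives that each $\Pi_j$ is analytic on $B(0,r)$ with $\Pi_j(0)=0$ and $|\Pi_j(z)|\leq c_0$ uniformly in $j$, the Ces\`aro averages $F_n(z):=\frac1n\sum_{j=0}^{n-1}\Pi_j(z)$ form a normal family of analytic functions on $B(0,r)$ uniformly bounded by $c_0$. The hypothesised pointwise convergence on $B(0,\delta)$ combined with Vitali's convergence theorem will then force uniform convergence on compact subsets of $B(0,r)$ to an analytic limit $\Pi$, covering the analyticity claim (in particular on $B(0,2\delta)$). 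For \eqref{second exp} I would combine the operator factorisation $\cL_z^{0,n}g=\cL_0^{0,n}(g\cdot e^{zS_{0,n}u})$ with the iterated adjoint relation $(\cL_0^{0,n})^{*}\nu_n^{(0)}=\lambda_{0,n}(0)\,\nu_0^{(0)}$ to get $\int\cL_z^{0,n}g\,d\nu_n^{(0)}=\lambda_{0,n}(0)\int g\,e^{zS_{0,n}u}\,d\nu_0^{(0)}$. Taking $g=h_0^{(0)}$ and recalling $d\mu_0=h_0^{(0)}d\nu_0^{(0)}$ yields $\mu_0(e^{zS_{0,n}u})=\lambda_{0,n}(0)^{-1}\int\cL_z^{0,n}h_0^{(0)}\,d\nu_n^{(0)}$, and plugging in \eqref{Exponential convergence} produces
\[
\mu_0(e^{zS_{0,n}u})=\frac{\lambda_{0,n}(z)}{\lambda_{0,n}(0)}\Bigl[\nu_0^{(z)}(h_0^{(0)})\,\nu_n^{(0)}(h_n^{(z)})+O(\delta^n)\Bigr].
\]
Since the bracketed factor equals $1$ at $z=0$ and varies analytically with uniform bounds (Theorem \ref{RPF SDS}(ii)), it stays bounded away from $0$ and $\infty$ uniformly in $n$ on a small complex neighbourhood of the origin; taking logarithm and dividing by $n$ gives the first equality of \eqref{second exp}, and the analogous computation applied to $l_n(\cL_z^{0,n}\mathbf{1})$, using the uniform positive bounds on $l_n(h_n^{(z)})$ coming from $h_n^{(0)}\in[c,d]$, yields the second.

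For part (i), uniform convergence of $F_n\to\Pi$ on compact subsets of $B(0,r)$ propagates to all derivatives by Weierstrass, and the identity $\frac1n\sum_{m=0}^{n-1}\Pi_{j+m}(z)=\frac{n+j}{n}F_{n+j}(z)-\frac{j}{n}F_j(z)$ shows that the same limit $\Pi^{(k)}(0)$ is obtained for any fixed starting index $j$. Combined with Theorem \ref{MomThm.0}(ii)--(iii) this gives the existence of $p=\Pi'(0)$ as the limit of $n^{-1}\mu_j(S_{j,n}u)$ and of $\gamma_k=\lim_n\gamma_{j,k,n}$ independent of $j$, with $\gamma_2=\sigma^2=\Pi''(0)$, $\gamma_3=\Pi'''(0)$, and the polynomial relations $\gamma_k=C_k\sigma^k$ (even $k$), $\gamma_k=D_k\sigma^{k-3}\gamma_3$ (odd $k$) immediate from \eqref{Even mom}. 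The bound $\sigma^2>0$ under the hypotheses of Theorem \ref{Variance SDS}(ii) follows from the uniform linear lower bound $\gamma_{j,2,n}\geq\delta_0$ supplied there.

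Parts (ii) and (iii) are then G\"artner--Ellis applications with \eqref{second exp} providing the limiting log-moment generating function. For (iii), with $W_n=(S_{0,n}u-\mu_0(S_{0,n}u))/n$ and speed $n$, one computes $\frac1n\log\mu_0(e^{n\lambda W_n})\to\Pi(\lambda)-\lambda p$, analytic and (by $\sigma^2>0$) strictly convex on $(-\delta,\delta)$, whose Legendre transform is the claimed rate $L$ on $[\Pi'(-\delta),\Pi'(\delta)]$ after the natural centring shift. For (ii), with $W_n=(S_{0,n}u-E_n)/b_n$ and speed $v_n=b_n^2/n\to\infty$, set $t_n=\lambda b_n/n\to 0$ and Taylor-expand $\Pi_j(t_n)-t_n\Pi_j'(0)=\tfrac12 t_n^2\Pi_j''(0)+O(|t_n|^3)$ uniformly in $j$ via Cauchy's estimates; summing over $j$ and dividing by $v_n$ then gives $v_n^{-1}\log\mu_0(e^{v_n\lambda W_n})\to\lambda^2\sigma^2/2$, the error terms $O(b_n\lambda^3/n)$ and $O(n/b_n^2)$ vanishing because $b_n\ll n$ and $b_n\gg\sqrt n$. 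The Legendre transform $x^2/(2\sigma^2)$ is the claimed $I$.

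The hard part will be the bookkeeping in identity \eqref{second exp}: verifying that the correction factor $\nu_0^{(z)}(h_0^{(0)})\nu_n^{(0)}(h_n^{(z)})$ stays bounded away from $0$ uniformly in $n$ on a complex neighbourhood of the origin (so that the logarithm is unambiguous), and that the exponentially small remainder is indeed negligible against $\lambda_{0,n}(z)/\lambda_{0,n}(0)$ after taking $\frac1n\log$. A minor subtlety in (iii) is that the direct G\"artner--Ellis output is the rate function $L(\cdot+p)$ for the centred process, which matches the stated $L$ only after the natural shift by $p$ (equivalently, by working with $W_n=S_{0,n}u/n$).
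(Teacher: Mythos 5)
Your proposal is correct and follows essentially the same route as the paper: analyticity of $\Pi$ from the uniformly bounded analytic averages, (\ref{second exp}) via the asymptotics (\ref{Exponential convergence}) (the paper first normalizes so that $\cL_0^{(j)}\textbf{1}=\textbf{1}$ using (\ref{Press diff}) and proves the $l_n$-identity by telescoping the ratios $l_{n+j+1}(\cL_z^{j,n+1}\textbf{1})/l_{n+j+1}(\cL_z^{j+1,n}\textbf{1})$, whereas you carry the correction factor $\nu_0^{(z)}(h_0^{(0)})\,\nu_n^{(0)}(h_n^{(z)})$ explicitly and apply (\ref{Exponential convergence}) directly to $l_n(\cL_z^{0,n}\textbf{1})$ --- both are fine), part (i) from Theorem \ref{MomThm.0}, and parts (ii)--(iii) from G\"artner--Ellis. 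Your explicit Taylor-expansion verification of the moderate-deviations scaling in (ii) and your remark on the shift by $p$ in the rate function of (iii) are details the paper delegates to the cited references, and they are correct (note also that the paper's displayed relation $\gam_k=D_k\sig^{k-1}\gam_3$ for odd $k$ should read $\sig^{k-3}$, as in your version and in Theorem \ref{MomThm.0}).
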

Note that $\Pi'(-\del)<\Pi'(\del)$ when $\sig^2>0$ since then the function $t\to\Pi(t)$ is strictly convex
in some real neighbourhood of the origin.
Theorem \ref{Gen: Var, LD, MD} (ii) is a moderate deviations principle (i.e. with quadratic rate function $I(x)$) which allows scaling sequences $(b_n)_{n=1}^\infty$ of optimal order, as can be viewed from Theorem \ref{Gen: Var, LD, MD} (iii), which is a local large deviations principle. 
 Note also that the equality 
\[
\Pi(z)=\lim_{n\to\infty}\frac1n\ln l_n(\cL_z^{0,n})
\]
can be interpreted for real $z$'s as a sequential analogy of the usual pressure function of the potential $f+zu=f_j+u_j$, in the case of a single map $T$ and functions $f$ and $u$ (see \cite{Bow} in the subshift case). Remark also that, in fact, our proof shows that if one of the limits in (\ref{second exp}) exists, then all of them exists and they are equal.


The limits $\Pi(z)$ does not seem to exist in general, not even for a single $z$. 
When $T_k,f_k,u_k$ are chosen at random according to some (not necessarily stationary) sequence of random variables, we
provide in Section \ref{NonStat} quite general conditions guaranteeing that Assumption \ref{GenPer} holds true and that the limits $\Pi(z)$ exist. In particular, in the circumstances considered in Section \ref{NonStat}, 
the limit $\sig^2=\lim_{n\to\infty}\frac1{n}\text{var}_{\mu_j}(S_{j,n}u)$ exist and it does not depend on $j$. In the circumstances of  Theorem \ref{Variance SDS} (ii), we will derive that $\sig^2>0$. Under certain circumstances, we will also show that (\ref{MomRateTemp}) holds true, and then derive (\ref{BE temp}).





\section{Sequential stability and non-singular maps} 

We will prove  here Theorem \ref{stability}. We will first recall some of the parts of the proof of Theorem \ref{RPF SDS}. The crucial part of the proof is to show that the exists a sequence of complex cones $\cC_j$, containing the constant function $\textbf{1}$, and a constant $n_0$ so that for any $n\geq n_0$ and $j\in\bbZ$ 
\[
\cL_z^{j,n}\cC_j\subset\cC_{j+n}.
\]
Under Assumption \ref{Ass maps}, the linear functional $\te_j(g)=L_j^{-1}\sum_{i=1}^{L_j}g(x_{i,j})$ belongs to the dual of the complex cone $\cC_j$, while under Assumptions \ref{Ass maps 2} and \ref{Ass maps 3} the linear functions $\te_j(g)=g(x_j)$ belongs to this dual, where $x_j$ is an arbitrary point in $\cE_j$.

Next, let $K$, $T_{1,j}$,  $f_{1,j}$ and $u_{1,j}$ be as 
in the statement of Theorem \ref{stability}. Let $\ve>0$, and denote by $\nu_{j,m}$ and $\nu_{1,j,m}$ the $m$-th derivative
at $z=0$ of the maps $z\to\nu_j^{(z)}$ and $z\to\nu_{1,j}^{(z)}$, respectively. Since $\|\nu_j(z)\|$ and 
$\|\nu_{1,j}(z)\|$  are both bounded by some constant $C$, which does not depend on $z$ and $j$, it follows from Lemma 2.8.2 in \cite{book} that for any $z\in\bar B(0,r)$ and $k\geq1$,
\[
\big\|\nu_{j}(z)-\sum_{m=0}^{k}\frac{\nu_{j,m}}{m!}z^m\big\|<(k+2)C2^{-k-1}
\]
and the same inequality holds true with $\nu_{1,j}$ in place of $\nu_{j}$. Let $k=k_\ve$ be the smallest positive integer so that $(k+2)C2^{-k-1}<\frac14\ve$. Then it is sufficient to show that there exists $\del_0>0$ so that 
if $\|\cL_z^{(j)}-\cL_{1,z}^{(j)}\|<\del_0$ for any $j$ and $z\in K$ then for any $m=0,1,...,k_\ve$ we have
\begin{equation}\label{A1}
\|\nu_{j,m}-\nu_{1,j,m}\|<\frac14 r^{-m}k_\ve^{-1}\ve.
\end{equation}
 First, applying (\ref{Exponential convergence}), we get that for any sufficiently large $n$, $z\in U$ and $j\in\bbZ$,
\[
\max\Big(\|\nu_j^{(z)}-F(j,n,z)\|,\,\|\nu_{1,j}^{(z)}-F_1(j,n,z)\|\Big)\leq C_1\del^n
\] 
where the linear functionals $F(j,n,z)$ and $F_1(j,n,z)$ are given by
\[
F(j,n,z)=
\frac{\te_{n+j}(\cL_z^{j,n}(\cdot))}{\te_{n+j}(\cL_z^{j,n}\textbf{1})}\,\,\text{ and }\,\,F_1(j,n,z)=
\frac{\te_{n+j}(\cL_{1,z}^{j,n}(\cdot))}{\te_{n+j}(\cL_{1,z}^{j,n}\textbf{1})}.
\]
Here $C_1$ and $\del\in(0,1)$ depend only on the initial parameters $B,L,\al,\gamma,n_0,\xi$ and $D$ and the linear functional $\te_{n+j}$ was defined at the beginning of this section. Note that the denominators in the definition of $F(j,n,z)$ and $F_1(j,n,z)$ indeed do not vanish since 
$\cL_{z}^{j,n}(\cdot))$ and $\cL_{1,z}^{j,n}(\cdot))$ belong to $\cC_{j+n}$.
Let $n_1=n_1(\ve)$ be the smallest positive integer so that $C_1\del^{n_1}r^{-m}m!<\frac{\ve}8$ for any 
$0\leq m\leq k_\ve$. Then by the Cauchy integral formula, for any $0\leq m\leq k_\ve$ we have
\[
\max\Big(\|\nu_{j,m}-F^{(m)}(j,n_1,0)\|,\,\|\nu_{1,j,m}-F_1^{(m)}(j,n_1,0)\|\Big)\leq r^{-m}m!C_1\del^{n_1}<\frac18\ve
\]
where $F^{(m)}(j,n,0)=\frac{d^mF(j,n,z)}{z^m}\big|_{z=0}$ and $F_1^{(m)}(j,n,z)$ is defined similarly with $F_1(j,n,z)$ in place of $F(j,n,z)$. Observe next that
\[
L_{j+n_1}^{-1}l_{j+n_1}(\cL_0^{j,n_1}\textbf{1})\geq e^{-\|S_{j,n_1}f\|_\infty}
\geq e^{-Bn_1}
\]
and the same inequality holds true with $\cL_{1,0}^{j,n_1}$ in place of $\cL_0^{j,n_1}$.
Therefore, the denominators in $F^{(m)}(j,n_1,0)$ and $F_1^{(m)}(j,n_1,0)$ are bounded from below by 
$e^{-Bn_12^m}$, which depends only on $\ve$ and $m$.
Using that for any families of operators $A_1,...,A_{n}$ and $B_1,...,B_{n}$, we have
\[
A_1\circ A_2\circ\cdots\circ A_{n}-B_1\circ B_2\circ\cdots\circ B_{n}
=\sum_{i=1}^{n-1}A_1\circ\cdots\circ A_{i-1}(A_i-B_i)B_{i+1}\circ\cdots\circ B_{n}
\]
and that $\cL_z^{j,n_1}$ are analytic in $z$ and uniformly bounded in $j$ and $z\in K$ in the operator norm $\|\cdot\|$ (by some constant which depends only on $n_1=n_1(\ve)$ and the initial parameters), we find that 
\[ 
 \sup_{j\in\bbZ}\sup_{z\in K}\|\cL_z^{(j)}-\cL_{1,z}^{(j)}\|<\del_0
\]
for some $\del_0>0$, then for any $j$ and $m\geq0$, 
\[
\|F^{(m)}(j,n_1,0)-F_1^{(m)}(j,n_1,0)\|<C(m,r,\ve)\del_0
\]
where $C(m,r,\ve)$ depends only on $m,\ve, r$ and the initial parameters.
Taking a sufficiently small $\del_0$ completes the proof of the claim about the stability of $\{\nu_j^{(z)}:\,j\in\bbZ\}$ (which was stated in Theorem \ref{stability}).
Since $\la_{j}(z)=\nu_{j+1}^{(z)}(\cL_z^{(j)}\textbf{1})$ and $\la_{1,j}(z)=\nu_{1,j+1}^{(z)}(\cL_{1,j}^{(j)}\textbf{1})$, we drive that for any $\ve>0$ there exists $\del_1>0$ so that 
\[
\sup_{j\in\bbZ}\sup_{z\in K}|\la_j(z)-\la_{1,j}(z)|<\ve\,\, \text{ if }\,\, \sup_{j\in\bbZ}\sup_{z\in K}\|\cL_{z}^{(j)}-\cL_{1,z}^{(j)}\|<\del_1.
\]
Finally, by (\ref{Exponential convergence}) for any $j\in\bbZ$, $n\geq1$ and $z\in U$ we have
\[
\Big\|h_{j}^{(z)}-\frac{\cL_z^{j-n,n}\textbf{1}}{\la_{j-n,n}(z)}\Big\|\leq A\del^n
\]
and similar inequality holds true with $h_{1,j}^{(z)}$, 
\[
\la_{1,j-n,n}(z):=\prod_{i=0}^{n-1}\la_{j-n+i}(z)
\]
and $\cL_{1,z}^{j-n,n}$. Let $n_2$ be so that $ A\del^{n_2}<\frac12\ve$. By taking fixing a sufficiently large $n_2$, we can also assume that $|\la_{j-n_2,n_2}(0)|\geq C_2\|\cL_0^{j-n_2,n_2}\|\geq C_2e^{-Bn_2}$ for some constant $C_2$ which does not depend on $j$.
Using now the stability of  $\la_j(z)$, for any $q>0$ and $m\geq0$ we can find $\del_2=\del_2(q,m,n_2)$ so that 
\[
\Bigg\|\frac{d^mL_j(z)}{dz^m}\Big|_{z=0}-\frac{d^mL_{1,j}(z)}{dz^m}\Big|_{z=0}\Bigg\|<q
\]
if $\|\cL_z^{(j)}-\cL_{1,z}^{(j)}\|<\del_2$ for any integer $j$ and $z\in K$,
where 
\[
L_j(z)=\frac{\cL_z^{j-n_2,n_2}\textbf{1}}{\la_{j-n_2,n_2}(z)}
\]
and $L_{1,j}(z)$ is defined similarly but with $\cL_{1,z}^{j-n_2,n_2}\textbf{1}$ and $\la_{1,j-n_2,n_2}(z)$.
Using this we can approximate each one of the derivatives of $h_{j}^{(z)}$ at $z=0$ by the corresponding derivative of $h_{1,j}^{(z)}$, which, as  in the proof of the stability of $\{\nu_j(z):\,j\in\bbZ\}$, is enough to drive the stability of $\{h_j^{(z)}:\,j\in\bbZ\}$ (for $z\in K$).



\section{Limit theorems: proofs}\label{LimSec}
First, since $\la_j(\cdot)$ are analytic function and $|\la_j(z)|$ is bounded 
uniformly in $j$ and $z$, and $a\leq \la_j(0)\leq b$ for some positive constants $a$ and $b$ which do not depend on $j$,
it is indeed possible to construct  functions $\Pi_j(\cdot)$ which satisfies the
conditions stated in Theorem \ref{MomThm.0} (i).

Next, set 
\[
\tilde\cL_z^{(j)}(g)=\frac{\cL_z(gh_j^{(0)})}{\la_j(0)h_{j+1}^{(0)}}.
\]
Then $\tilde\cL_0^{(j)}\textbf{1}=\textbf{1}$ for any $j$ and  all the results stated in Theorem \ref{RPF SDS} and in the rest of Section \ref{SDSsec} can be applied with the $\tilde\cL_z^{(j)}$. Indeed we can just consider the triplets
\begin{equation}\label{RPF2}
\tilde\la_j(z)=\frac{a_j(z)\la_j(z)}{a_{j+1}(z)\la_j(0)},\,\,
\tilde h_j^{(z)}=\frac{a_j(z)h_j^{(z)}}{h_j^{(0)}}\,\,\, \text{ and }\,\, \,
\tilde\nu_j^{(z)}=(a_j(z))^{-1}h_j^{(0)}\nu_j^{(z)}
\end{equation}
where $a_j(z)=\nu_j^{(z)}(h_j^{(0)})$ (which is nonzero since $h_j^{(0)}\in\cC_{j}$).  
Notice that $\tilde\nu_j^{(0)}=\mu_j$, that $\tilde h_j^{(0)}\equiv 1$ and $\la_j(0)=1$. Moreover, there exists a constant $C$ so that for any $j$ and $z$ we have $|a_j(z)|\leq c$. Since $a_j(0)=0$ it follows that that exist constant $a,b>0$ so that for any $z\in B(0,a)$ we have $b\leq |a_j(z)|\leq c$. Therefore, there exist positive constants $a_1$ $c_1$ so that for any $z\in B(0,a_1)$ and $n\geq1$,
\begin{equation}\label{Press diff}
\big|\sum_{m=0}^{n-1}\Pi_{j+m}(z)-\sum_{m=0}^{n-1}\tilde\Pi_{j+m}(z)\big|=|\ln a_j(z)-\ln a_{j+n}(z)|\leq c_1
\end{equation}
where $\tilde\Pi_j(z),\,j\in\bbZ$ are analytic functions which are defined (simultaneously) in some neighborhood $V$ of the origin, which depends only on the initial parameters, so that $e^{\tilde\Pi_j(z)}=\tilde\la_j(z)$ for each $z$ in $V$.
By the Cauchy integral formula we obtain that for any $k$ there exists a constant $E_k$ so that 
\[
\big|\sum_{m=0}^{n-1}\Pi_{j+m}^{(k)}(0)-\sum_{m=0}^{n-1}\tilde\Pi_{j+m}^{(k)}(0)\big|\leq E_k.
\] 
We conclude that in the proofs of Theorems \ref{Variance SDS}, \ref{Berry-Esseen} and \ref{MomThm.0} we can assume that 
$\cL_z^{(j)}\textbf{1}=\textbf{1}$, $\la_j(0)=1$, $h_j^{(0)}\equiv 1$ and $\nu_j^{(0)}=\mu_j$.

\subsection{The growth of the variance of $S_{0,n}u$} 
We begin with the proof of  Theorem \ref{Variance SDS} (i). It is sufficient to prove this part in the case when $\mu_s(u_s)=0$ for any $s$, for otherwise we will just replace $u_s$ with $u_s-\mu_s(u_s)$. 
First, if there exists a family of functions $\{Y_k:\,k\in\bbZ\}$ as in the statement of Theorem \ref{Variance SDS} (i),
then clearly $\text{var}_{\mu_k}(S_{k,n}u)$ is bounded in $n$, for each integer $k$. On the other hand, 
suppose that 
\[
\liminf_{n\to\infty}\text{var}_{\mu_0}(S_{0,n}u)<\infty.
\]
Since $(T_j)_*\mu_j=\mu_{j+1}$ and $S_{j+1,m}u\circ T_j=S_{j,m+1}u-u_j$ for any $j$ and $m\geq1$, it follows from (\ref{ExpDec}) that 
\[
\liminf_{n\to\infty}\text{var}_{\mu_k}(S_{k,n}u)<\infty 
\]
for any integer $k$. Let $L^{(k)}_\infty$ be the closed linear subspace of $L^2(\cE_k,\mu_k)$ generated
by the functions $u_k,u_{k+1}\circ T_k^1,u_{k+2}\circ T_k^2,u_{k+3}\circ T_k^3,...$.
We conclude that for any $k$ there exists a subsequence of $\text{var}_{\mu_k}(S_{k,n}u)$ 
which converges weakly in $L^2(\cE_k,\mu_k)$ to a member $-Y_k$ of $L^2(\cE_k,\mu_k)$. Using a diagonal argument we can assume that the indexes $(n_m)_{m=1}^\infty$ of the above subsequences do not depend on $k$. 

Next, for any $\zeta\in L^2(\cE_k,\mu_k)$ we have
\[
\lim_{n\to\infty}\mu_k(S_{k,n_m}\cdot\zeta)=-\mu_k(Y_k\zeta).
\]
Therefore, since $\tilde\cL_0^{(k)}$ is the dual of the map $g\to g\circ T_k$ with respect to the spaces $L^2(\cE_k,\mu_k)$
and $L^2(\cE_{k+1},\mu_{k+1})$, we have
\begin{eqnarray*}
\mu_k(Y_{k+1}\circ T_k\cdot\zeta)-\mu_k(Y_k\cdot\zeta)=
\mu_{k+1}(Y_{k+1}\cdot\tilde\cL_0^{(k)}\zeta)-\mu_k(Y_k\cdot\zeta)\\=
-\lim_{m\to\infty}\big(\mu_{k+1}(S_{k+1,n_m}u\cdot\tilde\cL_0^{(k)}\zeta)-\mu_k(S_{k,n_m}u\cdot\zeta)\big)\\=
-\lim_{m\to\infty}\big(\mu_k(S_{k+1,n_m}u\circ T_k\cdot\zeta)-\mu_k(S_{k,n_m}u\cdot\zeta)\big)\\=
\mu_k(u_k\cdot\zeta)-\lim_{m\to\infty}\mu_k(u_{k+n_m}\circ T_k^{n_m}\cdot\zeta)
\end{eqnarray*}
where  we used that $S_{k+1,n_m}u\circ T_k=S_{k,n_{m}+1}u-u_{k}$.
Since $\zeta\in L^{(k)}_\infty$ and $\mu_s(u_s)=0$ for any $s$, it follows from  (\ref{ExpDec}) that 
\[
\lim_{m\to\infty}\mu_k(u_{k+n_m}\circ T_k^{n_m}\cdot\zeta)=0
\]
and hence 
\[
Y_{k+1}\circ T_k-Y_k=u_k
\]
as members of $L^2(\cE_k,\mu_k)$. Now we will show that $Y_k$  can be chosen to be H\"older continuous so that the norms $\|Y_k\|$ are bounded in $k$. 
Let the function $W_k$ be given by 
\[
W_k=\sum_{j=1}^\infty\tilde\cL_0^{k-j,j}u_{k-j}.
\] 
This function is we defined, and is a member of $\cH_k$ because of (\ref{Exponential convergence}), and our
assumption that $\mu_j(u_j)=0$ for any $j$. 
In fact, the exponential convergence  (\ref{Exponential convergence}) implies that $\|W_k\|$ is bounded in $k$. 
Moreover, for each $k$ we have 
\[
W_{k}-\tilde\cL_0^{(k-1)}W_{k-1}=\tilde\cL_0^{(k-1)}u_{k-1}.
\]
Next, substituting both sides of the equality $Y_{k}\circ T_{k-1}-Y_{k-1}=u_{k}$ into $\tilde\cL_0^{(k-1)}$, it follows that
\[
Y_{k}-\tilde\cL_0^{(k-1)}Y_{k-1}=\tilde\cL_0^{(k-1)}u_{k-1}.
\]
Since the functions $\{W_k:\,k\in\bbZ\}$ also satisfy the above
relations, the family of functions $\{d_k:\,k\in\bbZ\}$ given by $d_k=Y_k-W_k$ satisfies that for any $k\in\bbZ$,
\[
d_{k+1}=\tilde\cL_0^{(k)}d_k,\,\,\mu_k-\text{a.s.}.
\]
Since $\tilde\cL_0^{(k)}$ is the dual of $g\to g\circ T_k$
 with respect to the spaces $L^2(\cE_k,\mu_k)$
and $L^2(\cE_{k+1},\mu_{k+1})$, $\tilde\cL_0^{(k)}(v\circ T_k)=v$ for any $v:\cE_{k+1}\to\bbC$ and $(T_k)_*\mu_k=\mu_{k+1}$, it follows that for any integer $k$,
\[
d_{k+1}\circ T_k=d_{k}
\]
or, equivalently, since $Y_{k+1}\circ T_k=Y_k+u_k$,
\[
Y_k-W_k=d_k=Y_{k+1}\circ T_k-W_{k+1}\circ T_k=Y_k+u_k-W_{k+1}\circ T_k.
\]
Therefore, $u_k=W_{k+1}\circ T_k-W_k$. This equality holds true $\mu_k$-almost surely,
but since both sides are continuous and $\mu_k$ assigns positive mass to open sets we derive that it holds true for any point in $\cE_k$. The proof of Theorem \ref{Variance SDS} (i) is complete. 
\qed

Now we will prove Theorem \ref{Variance SDS} (ii). By Theorem \ref{MomThm.0} there exists a constant $R_2$ so that 
\[
\big|\frac{1}{n}\text{var}_{\mu_0}(S_{0,n}u)-\frac1n\sum_{j=0}^{n-1}\la_{j+n}''(0)\big|\leq\frac{R_2}n
\]
where we used that $\la_j''(0)=\Pi_j''(0)$ (recall our assumption that $\mu_j(u_j)=0$). 
Let $T,f,u$ satisfy the conditions stated in Theorem \ref{Variance SDS} (i), and let $(\la(z),h^{(z)},\nu^{(z)})$ be the RPF triplet corresponding to the operators $\cL_z$ given by
\[
\cL_z g(x)=\sum_{y\in T^{-1}\{x\}}e^{f(y)+zu(y)}g(y).
\] 
Then (see \cite{IL}), since $u$ does not admit a co-boundary representation, 
\[
\sig^2=\la''(0)=\lim_{n\to\infty}\frac1n\text{var}_\mu(S_n u)>0.
\]
Note that $\sig^2=\la''(0)\geq c>0$ for some constant $c$ which depends only on the initial parameters
(as Theorem \ref{RPF SDS} holds true also when $T_j,f_j$ and $u_j$ do not depend on $j$).
Let $\ve>0$ and let $\del$ as in Theorem \ref{stability}. 
Then $|\la_j(z)-\la(z)|\leq\ve$ for any $j$, and so by the Cauchy integral formula,
\[
|\la_j''(0)-\la''(0)|=|\la_j''(0)-\sig^2|\leq c\ve 
\]
for some constant $c$ which depends only on the initial parameters.
We conclude that
\[
\Big|\frac1n\sum_{j=0}^{n-1}\la_{j+n}''(0)-\sig^2\Big|\leq
c\ve
\]
and therefore when $c\ve<\sig^2$ we obtain that 
\[
\frac{1}{n}\text{var}_{\mu_0}(S_{0,n}u)\geq \sig^2-c\ve-\frac{R_2}n
\]
which completes the proof of Theorem \ref{Variance SDS} (ii).

\qed

\subsection{Self normalized Berry-Essen theorem} 
Set $\sig_{0,n}=\sqrt{\text{var}_{\mu_0}(S_{0,n}u)}$, and
suppose that 
\[
\lim_{n\to\infty}\sig_{0,n}n^{-\frac13}=\infty.
\]
By replacing $u_j$ with $u_j-\mu_j(u_j)$, we can assume without a loss of generality that 
$\mu_j(u_j)=0$, which implies that
$\mu_j(S_{0,n})=0$ for any $n\geq1$, 
since $(T_j)_*\mu_j=\mu_{j+1}$ for any $j$.
By Theorem \ref{MomThm.0}, 
\begin{equation}\label{Mom11}
\big|\text{var}_{\mu_0}(S_{0,n}u)-\sum_{j=0}^{n-1}\Pi_j''(0)\big|\leq R_2
\end{equation}
for some constant $R_2$, and therefore, 
\begin{equation}\label{Pi growth}
\lim_{n\to\infty}n^{-\frac13}\sum_{j=0}^{n-1}\Pi_j''(0)=\infty.
\end{equation} 
Next, since $\la_j(0)=1$ and $\mu_j=\nu_j^{(0)}$ for any $n\geq1$ and $j$ we have
$(\cL_0^{0,n})^*\mu_{n}=\mu_{0}$
and therefore  for any $z\in\bbC$,
\begin{equation}\label{Fib CLT rel 1}
\mu_{0}(e^{zS_{0,n}u})
=\mu_{n}(\cL_0^{0,n}e^{zS_{0,n}})=
\mu_{n}(\cL_z^{0,n}\textbf{1}).
\end{equation}

Consider now the analytic function $\varphi_{0,n}:U\to\bbC$ given by 
\begin{equation}\label{var phi def 1}
\varphi_{0,n}(z)=\frac{\mu_{n}
(\cL_z^{0,n}\textbf{1})}{\la_{0,n}(z)}=
\int \frac{\cL_z^{0,n}\textbf{1}(x)}{\la_{0,n}(z)}d\mu_{n}(x)
\end{equation}
where $U$ is the neighborhood of $0$ specified in Theorem \ref{RPF SDS}.
Then by (\ref{Fib CLT rel 1}) for any $z\in U$, $j\in\bbZ$ and $n\geq1$,
\begin{equation}\label{Fib CLT rel 2}
\mu_{j}(e^{zS_{0,n}u})=\la_{0,n}(z)
\varphi_{,n}(z)=
e^{\sum_{j=0}^{n-1}\Pi_j(z)}\varphi_{0,n}(z).
\end{equation}
Next, by Theorem \ref{MomThm.0} (ii) we have $\la'_{0,n}(0)=\sum_{j=0}^{n-1}\Pi_j'(0)=\bbE_{\mu_{j}}S_{j,n} u$, and therefore by (\ref{var phi def 1}), 
\begin{equation}\label{var phi ' 0 =0}
\varphi_{0,n}'(0)=0.
\end{equation}

Now, by taking a ball which 
is contained in the neighborhood $U$ specified in Theorem \ref{RPF SDS}, we can always assume that $U=B(0,r_0)$
is a ball around $0$ with radius $r_0>0$.
We claim that there exists a constant $A>0$ such that 
for any $n\in\bbN$ and  $z\in U_0$,
\begin{equation}\label{var phi bound}
|\varphi_{0,n}(z)|\leq A.
\end{equation}
Indeed, by (\ref{Exponential convergence}), there exist 
constants $A_1,k_1>0$ and $c\in(0,1)$ 
such that  for any $z\in U_0$
and $n\geq k_1$,
\begin{equation}\label{L z approx B.E.}
\big\|\frac{\cL_z^{0,n}\textbf{1}}{\la_{0,n}(z)}-h_{n}(z)
\big\|\leq A_1\del^{n}
\end{equation}
 By  Theorem \ref{RPF SDS}, for any $n\geq0$, $j\in\bbZ$ and $z\in U_0$  we have
$\|h_{n}(z)\|\leq C_1$, and  (\ref{var phi bound}) follows.

Next, applying Lemma 2.8.2 in \cite{book} with $k=1$ 
we deduce from  (\ref{var phi ' 0 =0}) and (\ref{var phi bound})
that there exists a constant $B_1>0$ such that 
\begin{equation}\label{The const B}
|\varphi_{0,n}(z)-\varphi_{0,n}(0)|=|\varphi_{0,n}(z)-1|\leq B_1|z|^2
\end{equation}
for any $z\in U_1=B(0,\frac12r_0)=\frac12 U$. 
Moreover, using (\ref{Mom11}) and the above Lemma 2.8.2 there exist constants
$t_0,c_0>0$ such that for any $s\in[-t_0,t_0]$ and a sufficiently large 
$n$,
\begin{equation}\label{Re press0}
\Big|\sum_{j=0}^{n-1}\Pi_j(is)-
\frac{s^2}2\sig_{0,n}^2\Big|\leq c_0|s|^3n+\frac12R_2s^2
\end{equation}
where we also used that 
$
\sum_{j=0}^{n-1}\Pi_j'(0)=\bbE_{\mu_0}S_{0,n}u=0
$
and that $|\Pi_j(z)|\leq C$ for some $C$ which does not depend on $j$ and $z$. 
Set $s_n=\frac{\sig_{0,n}^2}{n2c_0}$ (which is bounded in $n$). Then, by (\ref{Re press0}), there exist constants $q,q_0>0$ so that $q_0s_n\leq \min(t_0,\frac12 r_0)$ and that 
for any sufficiently large $n$ and $s\in[-q_0s_n,q_0s_n]$, 
\begin{equation}\label{Re press1}
\Re\Big(\sum_{j=0}^{n-1}\Pi_j(is)\Big)\leq -qs^2\sig_{0,n}^2.
\end{equation}
Next, by the Esseen-inequality (see \cite{Shir})
for any two distribution functions $F_1:\bbR\to[0,1]$
and $F_2:\bbR\to[0,1]$
with characteristic functions $\psi_1,\psi_2$, respectively, and $T>0$,
\begin{equation}\label{Essen ineq}
\sup_{x\in\bbR}|F_1(x)-F_2(x)|\leq \frac{2}{\pi}\int_{0}^T
\big|\frac{\psi_1(t)-\psi_2(t)}{t}\big|dt+\frac{24}{\pi T}\sup_{x\in\bbR}|F_2'(x)|
\end{equation}
assuming that $F_2$ is a  function with a bounded first
derivative. Set $T_n=\frac{q_0\sig_{0,n}^3}{2c_0 n}$, which converges to $\infty$ as $n\to\infty$. For any real $t$ set 
$t_n=\sig_{0,n}^{-1}t$. Let $t\in[-T_n,T_n]$. Then
by(\ref{Fib CLT rel 2}),
\begin{eqnarray}\label{We have}
|\mu_0(e^{it\sig_{0,n}^{-1}S_{0,n}u})-e^{-\frac12t^2}|\leq e^{\sum_{j=0}^{n-1}\Re(\Pi_j(it_n))}|\varphi_{0,n}(it_n)-1|
\\+|e^{\sum_{j=0}^{n-1}\Re(\Pi_j(it_n))}-e^{-\frac12t^2}|:=I_1(n,t)+I_2(n,t).\nonumber
\end{eqnarray}
By (\ref{Re press1}) we have $e^{\sum_{j=0}^{n-1}\Re(\Pi_j(it_n))}\leq e^{-qt^2}$ and therefore by (\ref{The const B}),
\[
I_1(n,t)\leq B_1e^{-qt^2}t^2\sig_{0,n}^{-2}.
\]
Using the mean value theorem, together with
(\ref{Re press0}) applied with $s=t_n$, taking into account that  $e^{\sum_{j=0}^{n-1}\Re\Pi_j(it_n)}\leq e^{-qt^2}$, we derive that 
\[
I_2(n,t)\leq c_1n\sig_{0,n}^{-3}(|t|^3+t^2)e^{-c_2t^2}
\]
for some constants $c_1,c_2>0$. Let $F_1$ be the distribution function of $S_{0,n}u(\textbf{x}_0)$, where $\textbf{x}_0$ is distributed according to $\mu_0$, and let $F_2$ be the standard normal distribution.
Since the functions $|t|e^{-qt^2}$ and $(|t|^3+t^2)e^{-c_2t^2}$ are integrable, applying (\ref{Essen ineq}) with these functions and the above $T=T_n$ we complete the proof of Theorem \ref{Berry-Esseen}, taking into account that $\sig_{0,n}^{-2}$ converges faster to $0$ than $T_n$.
\qed

\subsection{Local limit theorem and Edgeworth expansion of order three}\label{LLT sec}
Theorem \ref{Gen LLT} follows from the arguments in the proof of Theorem 2.2.3 in \cite{book}, which provides conditions for a  central local limit theorem to hold true in the case when the asymptotic variance $\lim_{n\to\infty}n^{-1}\text{var}(S_{0,n}u)$ exists, and it is positive (using Fourier transforms). Still, 
 when there exist constants $c_1,c_2>0$ so that for any sufficiently large $n$,
\[
\text{var}(S_{0,n}u)\geq c_1n
\]
then, 
by replacing any appearance of $\sig\sqrt n$ with $\text{var}(S_{0,n}u)$, we derive that in order to prove Theorem \ref{Gen LLT} it is sufficient to show that there exists constants $a,b>0$ and $\del\in(0,1)$ so that for any $n\geq1$ and $t\in[-\del,\del]$,
\begin{equation}\label{Goal}
|\mu_0(e^{itS_{0,n}u})|\leq ae^{-bnt^2}.
\end{equation}
When $\sig_{0,n}^2$ grows linearly fast in $n$ then by (\ref{Re press0}), there exists a constant $t_1>0$ so that for any $t\in[-t_1,t_1]$ and a sufficiently large $n$ we have
\begin{equation}\label{Re press}
\sum_{j=0}^{n-1}\Re(\Pi_j(it))\leq -q ns^2
\end{equation}
which together with (\ref{Fib CLT rel 2}) yields (\ref{Goal}).

Now we will show that Assumption \ref{GenPer} implies that (\ref{A3}) holds true. Indeed, using the spectral methods in \cite{HH}, it follows that for any $J\subset I_h$ and $s\geq1$,
\begin{equation}\label{BJ first}
\sup_{t\in J}\|\textbf{L}_{it}^s\|\leq cr^s
\end{equation}
for some $c=c(J)$ and $r=r(J)\in(0,1)$ which may depend on $J$ but not on $s$. Under our assumptions we have $\|\cL_{it}^{j,n}\|\leq 6(1+\frac{2B}{1-\gam^{-1}})\leq B_J$, where $B_J$ is some number which depends only on $J$. Under Assumptions \ref{Ass maps} and \ref{Ass maps 2} such a constant exists due to Lemma 5.6.1 in \cite{book}, while under Assumption \ref{Ass maps 3} it exists in view of (5.3) in \cite{ASIP me}. Write 
\[
\{0\leq m:\,B_J\|\cL_{it}^{m,sm_0}-\textbf{L}_{it}^s\|_\al<1-\del_0\,\,\,\forall t\in J\}=\{m_1<m_2<....\}.
\]
for some strictly increasing  infinite sequence of nonnegative integers $(m_i)_{i=1}^\infty$. 
Fix some $s$ so that $B_Jcr^s<\frac{\del_0}2$, and for each $k$
set $i_k=sm_0k$. Then $m_{i_k}+sm_0\leq m_{i_{k+1}}$ and by (\ref{GenPerEq}) we have
\[
\lim_{n\to\infty}\frac{k_n}{\ln n}=\infty
\]
where $k_n=\max\{k:\,m_{i_k}\leq n-m_0s\}$. Set $l_k=m_{i_k}$ and write: 
\[
\cL_{it}^{0,n}=\cL_{it}^{l_{k_n}+sm_0,n-l_{k_n}-sm_0}\circ\cL_{it}^{l_{k_n},sm_0}\circ\cdots\circ\cL_{it}^{l_2,sm_0}\circ\cL_{it}^{l_1+sm_0,l_2-l_1-sm_0}\circ\cL_{it}^{l_1,sm_0}\circ \cL_{it}^{0,l_1}.
\]
The blocks of the form $\cL_{it}^{l_i,sm_0}$ satisfy 
\[
\|\cL_{it}^{l_i,sm_0}\|_\al<B_J^{-1}(1-\del_0)+cr^s<
B_J^{-1}(1-\frac12\del_0)
\]
and the norm of the other block does not exceed $B_J$. Therefore, for any $n\geq1$ we have
\[
\sup_{t\in J}\|\cL_{it}^{0,n}\|\leq \big(1-\frac12\del_0\big)^{k_n}
\]
and since $\mu_{0}(e^{itS_{0,n}u})=\mu_n(\cL_{it}^{0,n}\textbf{1})$ and $k_n$ grows faster than logarithmically in $n$,
we conclude that (\ref{A3}) holds true.


Next, when (\ref{A3}) holds true, then  the following theorem is proved exactly as in \cite{HafEdge}:
\begin{theorem}
There exists a sequence of polynomials
\[
P_{j,n}(s)=\sum_{k=0}^{m_1}a_{j,n,k}s^j,\,n\geq1
\] 
with random coefficients, whose degree $m_1$ does not depend on $j$ and $n$, so that  for any $n\geq1$ with $\Pi_{j,n,2}=\frac1n\sum_{m=0}^{n-1}\Pi_{j+m}''(0)>0$ we have
\begin{eqnarray*}
\sup_{s\in\bbR}\Big|\sqrt{2\pi}\mu_j\{x\in\cE_j: S_{j,n}(x)\leq\sqrt n s\}-\\\frac{1}{\sqrt{\Pi_{j,n,2}}}\int_{-\infty}^s e^{-\frac{t^2}{2\Pi_{j,n,2}}}dt- n^{-\frac{1}2}P_{j,n}(s)e^{-\frac{s^2}2}\Big|=o(n^{-\frac{1}2}).
\end{eqnarray*}
\end{theorem}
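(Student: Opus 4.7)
The plan is to imitate the Edgeworth expansion argument of \cite{HafEdge} by combining the characteristic function identity (\ref{Fib CLT rel 2}) with the spectral decay provided by (\ref{A3}). After the reductions already carried out in Section \ref{LimSec} (so $\la_j(0)=1$, $\nu_j^{(0)}=\mu_j$, $h_j^{(0)}\equiv\textbf{1}$), and assuming without loss of generality that $\mu_j(u_j)=0$, one has
\[
\mu_j\bigl(e^{itS_{j,n}u}\bigr)=\exp\Bigl(\sum_{m=0}^{n-1}\Pi_{j+m}(it)\Bigr)\,\varphi_{j,n}(it),
\]
with $\varphi_{j,n}$ analytic on a fixed disk $U$, uniformly bounded, and satisfying $\varphi_{j,n}(0)=1$, $\varphi'_{j,n}(0)=0$. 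The Cauchy integral formula, applied to the uniform bounds on $\Pi_{j+m}$ and $\varphi_{j,n}$ already used in the proof of Theorem \ref{Berry-Esseen}, produces the Taylor expansions
\[
\sum_{m=0}^{n-1}\Pi_{j+m}(it/\sqrt n)=-\tfrac12 t^2\Pi_{j,n,2}+\tfrac{(it)^3}{6\sqrt n}\Pi_{j,n,3}+O(t^4/n),
\]
\[
\varphi_{j,n}(it/\sqrt n)=1+\tfrac{(it)^2}{2n}\varphi''_{j,n}(0)+O(|t|^3/n^{3/2}),
\]
uniformly in $j$ and $n$ on $|t|\leq \del\sqrt n$. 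Multiplying and retaining terms up to order $n^{-1/2}$ yields, on the narrower range $|t|\leq \del n^{1/6}$,
\[
\mu_j\bigl(e^{itS_{j,n}u/\sqrt n}\bigr)=e^{-t^2\Pi_{j,n,2}/2}\Bigl(1+\tfrac{(it)^3}{6\sqrt n}\Pi_{j,n,3}+O\bigl((t^6+1)/n\bigr)\Bigr).
\]
The polynomial $P_{j,n}(s)$ is then defined by Fourier inversion of this $n^{-1/2}$ correction term expressed against the standard Gaussian $e^{-s^2/2}$, with its coefficients absorbing the change of scale from $\Pi_{j,n,2}$ to $1$; since only finitely many derivatives at the origin enter, the degree $m_1$ is independent of $j$ and $n$.

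To turn this characteristic-function expansion into a distributional estimate at rate $o(n^{-1/2})$, apply the Esseen smoothing inequality (\ref{Essen ineq}) with $F_1$ the law of $S_{j,n}u/\sqrt n$ and $F_2$ the Edgeworth approximant suggested by the above expansion, and choose $T=T_n=c\sqrt n$. The boundary term $24/(\pi T_n)$ then contributes $O(n^{-1/2})$, meeting the target rate. Split the integral $\int_0^{T_n}|t|^{-1}|\psi_1(t)-\psi_2(t)|dt$ into three zones. On $|t|\leq \del n^{1/6}$ the Taylor expansion, together with the Gaussian-type bound $\Re\sum_{m}\Pi_{j+m}(it/\sqrt n)\leq -qt^2$ analogous to (\ref{Re press1}), bounds the integrand by $C(t^4+t^6)n^{-1}e^{-qt^2}/|t|$, which integrates to $o(n^{-1/2})$. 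On $\del n^{1/6}\leq |t|\leq \del\sqrt n$ the same Gaussian bound yields a contribution of order $e^{-q' n^{1/3}}$, negligible. On $\del\sqrt n\leq |t|\leq T_n$ the point $t/\sqrt n$ lies in a fixed compact subset $J$ of $I_h$ bounded away from $0$, and since $\mu_j(e^{itS_{j,n}u/\sqrt n})=\mu_{j+n}\bigl(\cL_{it/\sqrt n}^{j,n}\textbf{1}\bigr)$, the block estimate $\sup_{s\in J}\|\cL_{is}^{0,n}\|\leq (1-\tfrac12\del_0)^{k_n}$ extracted from Assumption \ref{GenPer} in the proof of (\ref{A3}) together with $k_n/\ln n\to\infty$ gives $o(n^{-A})$ for every $A>0$; consequently this zone contributes $o(n^{-1/2})$, with room to spare for the logarithmic factor coming from $|t|^{-1}dt$.

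The main obstacle is the third zone. Its treatment is the whole point of Assumption \ref{GenPer}: one needs decay of the characteristic function on compact sets away from the origin that is fast enough to outpace both $n^{-1/2}$ and the logarithmic weight arising from $\int|t|^{-1}dt$, and the super-polynomial bound $(1-\tfrac12\del_0)^{k_n}$ with $k_n/\ln n\to\infty$ is precisely what supplies this. Once this is in hand, combining the three zone estimates with the Taylor form of $\psi_2$ inside the Esseen inequality gives the Edgeworth expansion stated in the theorem, with $P_{j,n}(s)=\sum_{k=0}^{m_1}a_{j,n,k}s^k$ whose coefficients $a_{j,n,k}$ are explicit polynomial expressions in $\Pi_{j,n,2}$, $\Pi_{j,n,3}$ and $\varphi''_{j,n}(0)$.
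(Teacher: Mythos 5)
Your overall strategy --- the identity $\mu_j(e^{itS_{j,n}u})=e^{\sum_m\Pi_{j+m}(it)}\varphi_{j,n}(it)$, Taylor expansion of the pressure sum and of $\varphi_{j,n}$, the Esseen smoothing inequality, and a three-zone splitting with Assumption \ref{GenPer} controlling the outer zone --- is exactly the route the paper intends (the paper itself gives no details and defers to \cite{HafEdge}). There is, however, one step that does not deliver the stated conclusion: you take $T_n=c\sqrt n$ in (\ref{Essen ineq}) and assert that the boundary term $24/(\pi T_n)=O(n^{-1/2})$ ``meets the target rate''. The theorem claims an error of $o(n^{-1/2})$, and $O(n^{-1/2})$ is not $o(n^{-1/2})$: with a fixed constant $c$ the smoothing term contributes a genuine, non-vanishing quantity at scale $n^{-1/2}$, so your argument as written only reproves the Berry--Esse\'en-type bound of Theorem \ref{Berry-Esseen} and does not justify isolating the $n^{-1/2}P_{j,n}(s)e^{-s^2/2}$ correction with a smaller remainder.

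The standard repair --- and the place where the aperiodicity hypothesis is actually consumed --- is to take $T_n=M_n\sqrt n$ with $M_n\to\infty$, so that the boundary term is $o(n^{-1/2})$. The outer zone then becomes $t/\sqrt n\in[\del,M_n]$, a compact set growing with $n$, and (\ref{A3}) must be invoked on the sequence of compacts $J_n=[\del,M_n]$ rather than on a single fixed $J$. Since the bound $(1-\tfrac12\del_0)^{k_n}$ with $k_n/\ln n\to\infty$ extracted from Assumption \ref{GenPer} is super-polynomial on each fixed compact, a diagonal argument (choose $M_n\to\infty$ slowly enough that the decay on $J_n$ remains, say, $o(n^{-1})$ uniformly) closes the gap, but this choice and the uniformity over growing compacts need to be made explicit. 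Two smaller points: the term $\varphi''_{j,n}(0)(it)^2/(2n)$ is of order $n^{-1}=o(n^{-1/2})$ and so belongs to the remainder, not to $P_{j,n}$ --- at this precision only $\Pi_{j,n,3}$ (and the normalization by $\Pi_{j,n,2}$) enters the coefficients; and since the theorem is asserted for every $j$, you should note that the decay of (\ref{A3}) is needed for $\cL_{it}^{j,n}$ uniformly in $j$, which is what Assumption \ref{GenPer} in fact provides.
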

In the case when $T_j,f_j$ and $u_j$ do not depend on $j$, such (and high-order) expansions  were obtained in \cite{Coelho} and \cite{Liverani}.
In \cite{HafEdge} we  obtained high order Edgeworth expansions for random dynamical systems, and, in concrete examples, we have managed to show that the additional required condition  (Assumption 2.4 there) holds true, using some sub-additivity arguments (relying on Kingman's theorem), together with the arguments in \cite{[3]}, which is impossible to generalize for general sequential dynamical systems (as there is no subadditivity of any kind). Therefore we will not formulate here an appropriate theorem, even though it is possible to obtain high-order Edgeworth expansions under appropriate version of the latter Assumption 2.4 (plus additional assumptions in the spirit of Assumption \ref{GenPer}).

\subsection{Martingale approximation, exponential concentration inequalities and moderate deviations}
As we have explained at the beginning of Section \ref{LimSec}, we can assume in course of the proof of Theorem \ref{Mart thm} that $\cL_0^{(j)}\textbf{1}=\textbf{1}$ for each $j$, which means that $\la_j(0)=1$ and $h_j^{(0)}\equiv\textbf{1}$. Moreover, we can clearly assume that $\mu_j(u_j)=0$ for each $j$.
Fix $n\geq1$, and let $\textbf{x}_0$ be a random member of $\cE_0$ which is distributed according to $\mu_0$. Consider the random variables $X_1,X_2,...,X_n$ given by 
\[
X_m=X_m^{(n)}=u_{n-m}(T_0^{n-m}\textbf{x}_0).
\] 
For each $m$, 
consider the $\cE_{m}$-valued random variable $Z_m=T_0^{m}\textbf{x}_0$. Then $Z_{m+1}$ is a function of $Z_{m}$ and so
the family of $\sig$-algebras $\cF_k=\cF_k^{(n)},\,k=1,2,...,n$, 
which is given by 
\[
\cF_{k}=\sig\{Z_n,Z_{n-1},...,Z_{n-k}\}=\sig\{Z_{n-k}\}
\]
is increasing in $k$. For the sake of convenience, set $\cF_m=\cF_n$ when $m>n$.
Next, we claim that. 
\[
\bbE[X_k|\cF_{k-l}]=\cL_0^{n-k,l}(u_{n-k})\circ T_0^{n-k+l}.
\]
Indeed, since for any $j$ we have $(\cL_0^{(j)})^*\mu_{j+1}=\mu_j$
and $(T_j)_*\mu_j=\mu_{j+1}$, 
for any measurable and bounded function $g:\cE_{n-k+l}\to\bbR$ we have
\begin{eqnarray*}
\bbE[g(Z_{n-k+l})\cL_0^{n-k,l}(u_{n-k})\circ T_0^{n-k+l}]=\\\int g(T_0^{n-k+l}x)\big(\cL_{0}^{n-k,l}(u_{n-k})\circ T_0^{n-k+l}\big)(x)d\mu_0(x)\\=
\int g(x)\big(\cL_{0}^{n-k,l}(u_{n-k})\big)(x)d\mu_{n-k+l}(x)=
\mu_{n-k+l}\big(\cL_0^{n-k,l}(g\circ T_{n-k}^l\cdot u_{n-k})\big)\\=
\mu_{n-k}(g\circ T_{n-k}^l\cdot u_{n-k})=\mu_0(g\circ T_0^{n-k+l}\cdot u_{n-k}\circ T_{0}^{n-k})=\bbE[g(Z_{n-k+l})X_k].
\end{eqnarray*}
By (\ref{Exponential convergence}) and since $\mu_j(u_j)=0$ we derive that for each $0\leq l\leq k\leq n$ we have
\[
\|\bbE[X_k|\cF_{k-l}]\|_{L^\infty}\leq C_1\del^l
\]
for some constants $C_1>0$ and $\del\in(0,1)$, which depend only on the initial parameters.
Set $X_m=0$ for any $m>n$ and then for any $j\geq1$ set
\[
W_j=X_j+\sum_{s\geq j+1}\bbE[X_s|\cF_{j+1}]-
\sum_{s\geq j}\bbE[X_s|\cF_{j}].
\]
Then $\{W_j:\,j\geq1\}$ is a martingale difference with respect to the filtration $\{\cF_j:\,j\geq1\}$, whose differences are bounded by some constant $C>0$. Observe that 
\[
\|S_{0,n}u(\textbf{x}_0)-\sum_{j=1}^{n}W_j\|_{L^\infty}
=\|\sum_{j=1}^{n}X_j-\sum_{j=1}^{n}W_j\|_{L^\infty}\leq C_2
\]
for some other constant $C_2$. Set $M_n=\sum_{j=1}^{n}W_j$. Let $(\Om,\cF,P)$ be a probability measure so large so that all 
the random variables defined above are defined on $(\Om,\cF,P)$, and denote by $\bbE_P$ with expectation with respect to $P$. Then  by the Hoeffding-Azuma inequality (see \cite{Mil}), for any $\la>0$ we have
\[
\max\big(\bbE_P[e^{\la M_n}],\bbE_P[e^{-\la M_n}]\big)
\leq e^{\la^2 nC^2}
\]
and so, by the Markov inequality, for any $t\geq 0$,
\begin{eqnarray*}
\mu_0\{x: |S_{0,n}u(x)|\geq C_1+t)=P(|\sum_{j=1}^n X_j|\geq C_1+t)
\leq P(|M_n|\geq t)\\\leq P(M_n\geq t)+P(-M_n\geq t)\leq
P(e^{\la_t M_n}\geq e^{t\la_t})+P(e^{-\la_t M_n}\geq e^{t\la_t})\\\leq 2e^{-t\la_t+\la_t^2nC^2}=2e^{-\frac{t^2}{4Cn}}
\end{eqnarray*}
where $\la_t=\frac{t}{2Cn}$, which together with the previous estimates completes the proof of 
Theorem \ref{Mart thm}.


\subsubsection{\textbf{Moderate deviations theorems via the method of cumulants}}

Relying on (\ref{ExpDec})  and using that $(T_j)_*\mu_j=\mu_{j+1}$, we derive  the following multiple correlation estimate holds: for any $s$ and functions $f_i:\cE_{j+m_i}$, where $i=0,1,...,s$ and $0\leq m_0<m_1<m_2<...<m_s$,
\begin{equation}\label{CumEst}
\Big|\mu_j(\prod_{i=0}^{s}f_i\circ T_j^{m_i})-\prod_{j=0}^{s}\mu_{j+m_i}(f_i)\Big|\leq dM^s\sum_{i=1}^{s}\del^{m_i-m_{i-1}}
\end{equation}
where $d$ is some constant and $M=\max\{\|f_i\|_\al:\,0\leq i\leq s\}$. 

Recall next that the $k$-th cumulant of a (bounded) random variable $W$ is given by 
\[
\Gam_k(W)=\frac{1}{i^k}\frac{d^k}{dt^k}\big(\ln\bbE e^{itW}\big)\big|_{t=0}.
\]
Relying on \ref{CumEst} we can apply Lemma 14 in \cite{Douk}, we derive that
\begin{equation}\label{Comulant est}
|\Gam_k(S_{j,n}u(\textbf{x}_j)-\mu_j(S_{j,n}u))|\leq n(k!)^2c_0^k
\end{equation}
where $\textbf{x}_j$ is distributed according to $\mu_j$ and $c_0$ is some constant. Suppose that 
\[
\lim_{n\to\infty}\frac{\sig_{0,n}}{n^{\frac13+\ve_0}}=\infty
\]
for some $0<\ve_0<\frac16$, where $\sig_{0,n}^2:=\text{var}_{\mu_0}(S_{0,n}u)$. Then, Theorem \ref{Gen MD} follows from  
Theorem 1.1 in \cite{Dor}, applied with $Z_n=\frac{S_{j,n}u\textbf{x}_j-\mu_j(S_{j,n}u)}{\sig_{0,n}}$, $\Del_n=n^{-3\ve_0}$ and $\gam=2$, taking into account that for any $0<\ve_0<\frac16$ and $k\geq3$ we have 
$n^{-k\ve_0}\leq n^{-3\ve_0(k-2)}$.
 Note that several other types of moderate deviations type results follow from the above estimates of the cumulants, see \cite{Dor}.
Remark also that by Corollary 2.1 in \cite{Stat} for any $n\geq1$ we have
\begin{equation}\label{SecBE}
\Big|\mu_0\{x\in\cE_0: S_{0,n}u(x)-\mu_0(S_{0,n}u)\leq r\sig_{0,n}\}-\frac1{\sqrt {2\pi}}\int_{-\infty}^{r}e^{-\frac12 t^2}dt\Big|\leq cn^{-\ve_0}
\end{equation}
for some constant $c$ which depends only on the initial parameters. This provides another proof of the CLT when the variances grow sufficiently fast (note: the rate $n^{-\ve_0}$ is not optimal).

\subsection{Logarithmic moment generating functions}
In this section we will prove Theorem \ref{Gen: Var, LD, MD}. 
Suppose that the limits 
\[
\Pi(z)=\lim_{n\to\infty}\frac1n\sum_{j=0}^{n-1}\Pi_{j}(z)
\]
exist in some open disk $B(0,\del)$ around $0$ in the complex plane. Then $\Pi(z)$ is analytic in $z$ since it is a pointwise limit of a sequence of analytic functions which is uniformly bounded in $n$ (such limits are indeed analytic, as a consequence of the Cauchy integral formula). Next, we claim that we can construct a branch of the logarithm of 
$\mu_0(e^{zS_{0,n}u})$ on $B(0,\del)$ so that
for any $z\in B(0,\del)$,
\begin{equation}\label{Lim Goal}
\Pi(z)=\lim_{n\to\infty}\frac1n\ln\mu_0(e^{zS_{0,n}u}).
\end{equation}
In view of (\ref{Press diff}), we can prove the above in the case when $\cL_z^{(j)}\textbf{1}=\textbf{1}$. In this case, we have 
\[
\mu_{0}(e^{zS_{0,n}u})
=\mu_{n}(\cL_0^{0,n}e^{zS_{0,n}})=
\mu_{n}(\cL_z^{0,n}\textbf{1})
\]
and so by (\ref{Exponential convergence})  we have 
\[
\lim_{n\to\infty}
\Big|\frac{\mu_{0}(e^{zS_{0,n}u})}{\la_{0,n}(z)}-\mu_n(h_n^{(z)})\Big|=0.
\]
Since $h_n^{(0)}=1$ and the norms $\|h_n(z)\|_\al$ are uniformly bounded in $n$ and $z$, there exist positive constants $\del_1, c_1$ and $c_2$ so that for any $z\in B(0,\del_1)$ and  $n\in\bbN$ we have $c_1\leq |\mu_n(h_n^{(z)})|\leq c_2$, which implies that for any sufficiently large $n$,
\[
C_1\leq \Big|\frac{\mu_{0}(e^{zS_{0,n}u})}{\la_{0,n}(z)}\Big|\leq C_2
\]
where $C_1$ and $C_2$ are some positive constants. Therefore, a branch of the logarithm of $\mu_{0}(e^{zS_{0,n}u})$
can be defined so that (\ref{Lim Goal}) holds true.
Note that we also used that $\sum_{j=0}^{n-1}\Pi_j(z)$ is a branch of $\la_{0,j}(z)$. 

In order to prove that 
\[
\Pi(z)=\lim_{n\to\infty}\frac1n\ln l_n(\cL_z^{0,n})
\]
we first observe that  since $\la_j=\nu_{j+1}^{(z)}(\cL_z^{(j)}\textbf{1})$, then by (4.3.26) in \cite{book} we have
\[
|\la_j(z)-g_{j,n}(z)|\leq C\del^n
\]
where
\[
g_{j,n}(z,j)=\frac{l_{n+j+1}(\mathscr{L}_z^{j,n+1}\textbf{1})}{l_{n+j+1}(\mathscr{L}_z^{j+1,n}\textbf{1})}
\]
and $l_{j+n}$ was defined in the statement of Theorem \ref{Gen: Var, LD, MD}.
We also used that the norms $\|\cL_z^{(j)}\|$ are bounded in $j$ and $z\in U$.
Replacing $n$ with $n-j-1$ we obtain that 
\[
\Big|\sum_{j=0}^{n-1}\Pi_j(z)-\sum_{j=0}^{n-1}\ln g_{j,n-j-1}(z,j)\Big|=
\Big|\sum_{j=0}^{n-1}\Pi_j(z)-(\ln l_n(\cL_z^{0,n})-\ln l_0(\cL_z^{(0)})\Big|
\]
is bounded in $n$.

All the statements from Theorem \ref{Gen: Var, LD, MD} (i) follow from Theorem \ref{MomThm.0}. The statements in parts (ii) and (iii) of Theorem \ref{Gen: Var, LD, MD} follow from the Gartner-Ellis theorem (see \cite{Demb}), as noted in \cite{Young3}.

\section{Applicatio to non-stationary random environemnts}\label{NonStat}
The case of independent and identically distributed maps $T_j$ has been widely studied, and extension to stationary maps $T_j$ were then considered. From this point of view it was natural to ask about limit theorems for sequences of map which are not random, but naturally the results that can be obtained are limited, for instance, one could not expect to have a local central limit theorem (without some kind of normalization) in such a generality. This  leads us to consider random maps which are not stationary. In this setup, even the case when the maps are independent but not identically distributed was not studied. Even in this simple case one does not have an underlying skew product map, so the system can not be handled using a single operators as was done in \cite{Aimino} for iid maps.  

\subsection{Random non-stationary mixing environmens: a local CLT}\label{Two sided} 
Let $\{\xi_n:n\in\bbZ\}$ be a family of random variables taking values at some measurable space $\cY$, which are defined on the same probability space $(\Om,\cF,P)$. Let $(\cX,d)$ be a compact metric space and let $\cE\subset\cY\times\cX$ a set measurable with respect to the product $\sig$-algebra, so that the fibers 
 $\cE_y=\{x\in \cX:\,(y,x)\in\cE\},\,\om\in\Om$ are compact. The latter yields (see \cite{CV} Chapter III) that the mapping $y\to\cE_y$ is measurable with respect to the Borel $\sig$-algebra induced by the Hausdorff topology on the space $\cK(\cX)$ of compact subspaces of $\cX$ and the distance function $\text{dist}(x,\cE_y)$ is measurable in $y$ for each $x\in \cX$.  
Furthermore, the projection map $\pi_\cY(y,x)=y$ is
measurable and it maps any $\cF\times\cB$-measurable set to a
$\cF$-measurable set (see ``measurable projection" Theorem III.23 in \cite{CV}).
For each $y\in\cY$, let 
$f_{y},u_{y}:\cE_{y}\to\bbR$ be functions so that the norms $\|f_{y}\|_\al$ and $\|u_{y}\|_\al$ are bounded in $y$. Let $T_{y}$ be  a family of maps from $\cE_{y}$ to $\cX$ so that $P$-a.s. we have 
\begin{equation}\label{Non-Stat ass 1}
T_{\xi_j}:\cE_{\xi_j}\to\cE_{\xi_{j+1}}
\end{equation}
and the family $\{T_j=T_{\xi_j}:\,j\in\bbZ\}$  satisfies all the conditions specified in Section \ref{SDSsec} (with non-random constants) with the spaces $\cE_j=\cE_{\xi_j}$. Note that for (\ref{Non-Stat ass 1}) to hold true, we can just assume that $\cE_{y}=\cX$ for each $y$. 
We also assume here that the maps $(y,x)\to f_y(x), u_y(x)$ and $(y,x)\to T_y(x)$ are measurable with respect to the $\sig$-algebra induced on $\{(y,x):\,y\in\cY,\,x\in\cE_y\}$ from the product $\sig$-algebra on $\cY\times\cX$.
Then by Lemma 5.1.3 in \cite{book}, the norms $\|f_y\|$ and $\|u_y\|$ are measurable functions of $y$.
Let the transfer operators $\cL_z^{(y)}$ which maps functions $g$ on $\cE_y$ to functions on $T_y(\cE_y)$ by the formula
\[
\cL^{(y)}_{z}g(x)=\sum_{a\in T_{y}^{-1}\{x\}}e^{f_{y}(a)+zu_{y}(a)}g(a)
\]
Under Assumption \ref{Ass maps}, 
by Lemma 4.11 in \cite{MSU} there exist $L(y)$ and $x_{i,y}=x_{i,y}$ which are measurable $y$, so that 
\[
\cE_y=\bigcup_{j=1}^{L(y)}B_y(x_{i,y},\xi)
\]
where $B_y(x,\xi)$ is an open open around $x\in\cE_y$ with radius $\xi$. In the above circumstance we also assume that 
$L(y)$ is bounded in $y$ (e.g. when $\cE_{y}=\cX$).
Consider the random operators $\cL^{(j)}_{z}$ given by 
$\cL^{(j)}_{z}=\cL^{(\xi_j)}_{z}$. We remark that the 
the RPF triplets $\la_j(z)$, $h_j^{(z)}$ and $\nu_j^{(z)}$ from Theorem \ref{RPF SDS} are measurable in $\om$ (in view of the limiting expressions of  $\la_j(z)$, $h_j^{(z)}$ and $\nu_j^{(z)}$ from Chapter 4 of \cite{book}).

In the rest of this section, we will impose restrictions on  the process $\{\xi_j\}$, which will guarantee that the 
 local central limit theorem described in \ref{Gen LLT} holds true, where it is sufficient to 
 to derive that Assumption \ref{GenPer} holds true. In order to achieve that,
we will rely on the following
\begin{assumption}\label{Mix Per}
The sequence $\{\xi_j:\,j\in\bbZ\}$ satisfies the following $\phi$-mixing type condition:
there exists a sequence $\phi(n),\,n\geq1$ so that $\sum_{n=1}^\infty\phi(n)<\infty$ and for any $j\in\bbZ$, $n\geq1$, $A\in\sig\{\xi_m:\,m\leq j\}$ and $B\in\sig\{\xi_m:\,m\geq j+n\}$,
\begin{equation}\label{phi}
|P(A\cap B)-P(A)P(B)|\leq P(A)\phi(n)
\end{equation}
where $\sig\{X_i:\,i\in\cI\}$ is the $\sig$-algebra generated by a family of random variables $\{X_i:\,i\in\cI\}$.

(ii) There exist points $y_1,y_2,...,y_{m_0}\in\cY$ so that for any sufficiently large $s\in\bbN$ and all
sufficiently small open neighborhoods $U_i$
of $y_i$,\,$i=1,2,...,m_0$ and $s\geq1$ we have
\begin{equation}\label{PropGrowth}
\lim_{n\to\infty}\frac{\sum_{m=1}^{n}P\big\{\xi_{msm_0+i}\in U_{i};\,\forall\,1\leq i\leq m_0s\}}{\sqrt{n\ln n}}=\infty
\end{equation}
where we set $U_{i}=U_{k_i}$ for any $i>m_0$,  if $i=m_0k_i+r$ some $0\leq r<m_0$.
\end{assumption}
Under Assumption \ref{Mix Per},
set 
\[
\textbf{L}_z=\cL_z^{(y_{m_0})}\circ\cdots\circ\cL_z^{(y_2)}\circ\cL_z^{(y_1)}.
\]

\begin{theorem}\label{LLT RandEnv}
Suppose that Assumption \ref{Mix Per} holds true and that the following two conditions hold true:

(A1)  For any compact $J\subset\bbR$, the family of maps $\{y\to\cL_{it}^{(y)}\}$, where $t$ over $J$, is equicontinuous at the points $y_1,y_2,...,y_{m_0}$, and $\cE_{y}$ does not depend on $y$,  when $y$ lies in some open neighborhood of one of the $y_i$'s.  

(A2) The spectral radius of $\textbf{L}_{it}$ is strictly less than $1$ for any $t\in I_h$, where in the non-lattice case we set $I_h=\bbR\setminus\{0\}$, while in the lattice case we set $I_j=(-\frac{2\pi}{h},\frac{2\pi}{h})$.

Then Assumption \ref{GenPer} holds true. 
\end{theorem}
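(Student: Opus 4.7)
The plan is to verify the two parts of Assumption \ref{GenPer} separately. The spectral radius condition is given directly by hypothesis (A2). For the quantitative block estimate (\ref{GenPerEq}), I would combine the equicontinuity (A1) with the $\phi$-mixing concentration from Assumption \ref{Mix Per} to count the indices $m$ at which the random word $\xi_{m+1},\ldots,\xi_{m+sm_0}$ traces cyclically through small neighborhoods of $y_1,\ldots,y_{m_0}$.

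Fix a compact interval $J\subset I_h$ and pick any $\del_0\in(0,1)$ (it can even be chosen independently of $J$). Given $B_J>0$, set $\eta:=(1-\del_0)/B_J$. By (A1) together with the uniform bound on $\|\cL_{it}^{(y)}\|$ for $y\in\cY$ and $t\in J$ (which is available from the same kind of estimate that produces (\ref{BJ first})), the telescoping identity used in the proof of Theorem \ref{stability} yields that, for any $s$, one can choose open neighborhoods $U_1,\ldots,U_{m_0}$ of $y_1,\ldots,y_{m_0}$ small enough so that
\[
\sup_{t\in J}\bigl\|\cL_{it}^{m,sm_0}-\textbf{L}_{it}^s\bigr\|<\eta
\]
whenever $\xi_{m+i}\in U_{\kappa(i)}$ for all $1\le i\le sm_0$, where $\kappa$ is the cyclic index from Assumption \ref{Mix Per}(ii). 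On this event $B_J\|\cL_{it}^{m,sm_0}-\textbf{L}_{it}^s\|<1-\del_0$ uniformly for $t\in J$, so it suffices to show that the number of such good $m$'s in $[0,n]$ grows faster than $\ln n$ almost surely.

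Let $A_m=\{\xi_{msm_0+i}\in U_{\kappa(i)},\ 1\le i\le sm_0\}$, $N_n=\sum_{m=1}^n \mathbf{1}_{A_m}$ and $\bar p_n=\sum_{m=1}^n P(A_m)$. By (\ref{PropGrowth}), once $s$ is large enough and the $U_i$ are small enough (both requirements compatible with the previous paragraph), $\bar p_n/\sqrt{n\ln n}\to\infty$. Since the events $A_m$ depend on disjoint blocks of $\{\xi_k\}$ of length $sm_0$, the $\phi$-mixing hypothesis gives $|\mathrm{Cov}(\mathbf{1}_{A_m},\mathbf{1}_{A_{m'}})|\le P(A_m)\phi(sm_0(m'-m-1)+1)$ for $m<m'$, hence $\mathrm{Var}(N_n)\le C\bar p_n$ with $C=1+2\sum_{k\ge 1}\phi(k)<\infty$. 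Chebyshev gives $P(|N_n-\bar p_n|\ge\bar p_n/2)\le 4C/\bar p_n$; evaluating along $n_k=2^k$, the fact that $\bar p_{n_k}\gg\sqrt{n_k\ln n_k}$ makes $\sum_k 1/\bar p_{n_k}$ summable, so Borel--Cantelli yields $N_{n_k}\ge\bar p_{n_k}/2$ eventually, almost surely. The monotonicity of $N_n$ transfers this bound to all $n$, giving $N_n\gg\sqrt{n\ln n}\gg\ln n$ almost surely. Each block-aligned $m$ counted by $N_n$ produces a qualifying index in the set appearing in (\ref{GenPerEq}), so that limit equals $+\infty$ almost surely.

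The main obstacle will be the last step: the $\phi$-mixing variance bound, and the transfer from subsequence Borel--Cantelli to a.s. growth for all $n$. Some bookkeeping is also required to confirm that the order of quantifiers in Assumption \ref{GenPer} ($\del_0=\del_0(J)$ first, then arbitrary $B_J$, then all $s$ sufficiently large) remains compatible with the way (A1) forces the $U_i$ to shrink as $B_J$ grows and with Assumption \ref{Mix Per}(ii)'s phrase ``all sufficiently small $U_i$,'' and that the telescoping estimate from (A1) produces uniformity on $J$ for compositions of length $sm_0$ that grow with $s$.
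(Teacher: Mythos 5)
Your proposal is correct and its deterministic half coincides with the paper's proof: both reduce (\ref{GenPerEq}) to counting block-aligned indices $m$ for which $\xi_{msm_0+1},\dots,\xi_{(m+1)sm_0}$ visit the prescribed neighborhoods, via the same telescoping identity $A_1\circ\cdots\circ A_m-B_1\circ\cdots\circ B_m=\sum_j A_1\circ\cdots\circ A_{j-1}(A_j-B_j)B_{j+1}\circ\cdots\circ B_m$ combined with (A1) and the uniform bound $B_J$, with the neighborhoods shrinking like $(1-\del_0)/(sm_0B_J^3)$ as $s$ grows; the quantifier bookkeeping you flag is resolved exactly as you anticipate, since Assumption \ref{Mix Per}(ii) is stated for all sufficiently small $U_i$ and all large $s$. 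Where you diverge is the probabilistic step. The paper invokes an exponential concentration inequality for $\phi$-mixing arrays (Theorem 2.4 of \cite{HafMD}), getting $P\{|S_n-\bbE S_n|\geq t+c\}\leq 2e^{-c_1t^2/n}$, and then applies Borel--Cantelli directly over all $n$ with $t_n=\te\sqrt{n\ln n}$ (the bound $2n^{-c_1\te^2}$ is summable for large $\te$). You instead prove the variance bound $\mathrm{Var}(N_n)\leq C\bar p_n$ from the covariance estimate $|\mathrm{Cov}(\mathbf{1}_{A_m},\mathbf{1}_{A_{m'}})|\leq P(A_m)\phi(sm_0(m'-m-1)+1)$ (valid since the $A_m$ live on disjoint, ordered blocks, and the subsum $\sum_k\phi(sm_0k+1)$ is dominated by $\sum_n\phi(n)<\infty$), and then use Chebyshev along the dyadic subsequence plus monotonicity of $N_n$. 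Your route is more elementary and self-contained, needing only second moments rather than the external exponential inequality; the paper's route gives stronger (square-root times logarithm) a.s.\ fluctuation control and avoids the subsequence argument. Both exploit hypothesis (\ref{PropGrowth}) in the same way and both deliver $N_n/\ln n\to\infty$ a.s., which is what (\ref{GenPerEq}) requires, so the proof is complete as proposed.
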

After showing that  Assumption \ref{GenPer} holds true we can apply Theorem \ref{Gen LLT} with $T_j=T_{\xi_j}, f_j=f_{\xi_j}$ and $u_j=u_{\xi_j}$ when, $P$-a.s. the variance $\text{var}_{\mu_0}(S_{0,n}u)$ grows linearly fast in $n$, which holds true when $\cE_y=\cX$ and $\|\cL_z^{(y)}-\cL_z\|_\al<\ve_0$ for any $y\in\cY$ and complex $z$ in some neighborhood of $0$, where $\cL_z$ and $\ve_0$ are specified in Theorem \ref{Variance SDS} (ii). 

\begin{proof}[Proof of Theorem \ref{LLT RandEnv}]
Let $J\subset$ be a compact set and let $B_J>0$ be as specified after (\ref{BJ first}), so that
that $\|\cL_{it}^{j,n}\|/\la_{j,n}(0)\leq B_J$.
Let $s\in\bbN$ and $\del_0\in(0,1)$. 
 Then, in view of our Assumption (A1), there exist open neighborhoods $U_j$ of $y_j$, $j=1,2,...,m_0$ so that $\|\cL_{it}^{(y)}-\cL_{it}^{(y_j)}\|<\ve_0$ for any $t\in J$, where $\ve_0$ satisfies that $sm_0 B_J^3\ve_0<1-\del_0$. Using that for any families of operators $A_1,...,A_m$ and $B_1,...,B_m$ we have
\begin{eqnarray*}
A_1\circ A_2\circ\cdots\circ A_m-B_1\circ B_2\circ\cdots\circ B_m\\=\sum_{j=1}^m A_1\circ A_2\circ\cdots\circ A_{j-1}(A_j-B_j)B_{j+1}\circ B_{j+2}\circ\cdots\circ B_m
\end{eqnarray*}
we obtain that for any $t\in J$ and $j\in\bbZ$ so that $\xi_{j+k}\in U_{k}$ for any $1\leq k\leq m_0s$,
we have
\begin{equation}\label{So that}
B_J\|\cL_{it}^{j+1,m_0s}-\textbf{L}^{s}_{it}\|_\al\leq sm_0 B_J^3\ve_0<1-\del_0
\end{equation}
where 
\[
\cL_{it}^{j,m_0s}=\cL_{it}^{(\xi_{j+m_0s})}\circ\cdots\circ\cL_{it}^{(\xi_{j+2})}\circ\cL_{it}^{(\xi_{j+1})}.
\]
Next, set 
$
\Gam_{m}=\{\om:\,\xi_{msm_0+i}(\om)\in U_{i};\,\forall\,1\leq i\leq m_0s\}
$
and 
\[
S_n=\sum_{m=1}^{[\frac{n-sm_0}{sm_0}]}\bbI_{\Gam_m}
\]
where $\bbI_\Gam$ is the indicator of a set $\Gam$. Then $S_n$ does not exceed the number of $j$'s between 
$1$ and $n$ so that (\ref{So that}) holds true.
In the above circumstances, we can use Theorem 2.4 in \cite{HafMD} with the random vectors $\{\bbI_{\Gam_{m}}:\,1\leq j\leq sm_0\},\,m=1,2,...$
and derive from (2.11) there that for any $t\geq0$,
\[
P\{|S_n-\bbE S_n|\geq t+c\}\leq 2e^{-c_1\frac{t^2}{n}}
\]
where $c$ and $c_1$ are positive constants which may depend only on $m_0$ and $s$. Taking $t$ of the form $t=t_n=\te\sqrt{n\ln n}$ for an appropriate $\te$, we derive from the Borel-Cantelli lemma that $P$-a.s. for any sufficiently large $n$ we have 
\[
|S_n-\bbE S_n|\leq t_n+c,\,P-\text{a.s.}
\]
which together with (\ref{PropGrowth}) yields that 
\[
\lim_{n\to\infty}\frac{S_n}{\sqrt{n \ln n}}=\infty
\]
and so (\ref{GenPerEq}) holds true $P$-a.s. with the operators $\cL_{it}^{(j)}$, where, in fact, in our circumstances the numerator inside the limit expression in (\ref{GenPerEq}) grows faster than $\sqrt{n \ln n}$.
\end{proof}


\subsection{Examples}



We will provide here several examples in which Assumption  \ref{Mix Per} is satisfied.
First, when considering the simple case when $\cY=\bbZ$, then for any $U_1\subset(-1,1)$ which contains $0$ we have
\[
\sum_{m=1}^{n}P\big\{\xi_{mm_0s+i}\in U_{i};\,\forall\,1\leq i\leq m_0s\}
=\sum_{m=1}^{n}P\{\xi_{mm_0s+i}=y_{i};\,\forall\,1\leq i\leq m_0s\}
\]
where we set $y_i=y_r$ if $i=m_0k+r$ for some integers $k\geq 0$ and $0\leq r<m_0$.
When $\xi_i$'s form an inhomogenious Markov chain (e.g. when they are independent) with $n$-th step transition probabilities $p^{(i,n)}_{j,k}=P(\xi_{i+n}=k|\xi_i=j)$ (so that (\ref{phi}) holds true), then we only require that 
\[
\sum_{m=1}^{n}\prod_{i=1}^{m_0s}P(\xi_{mm_0s}=y_1)p^{(mm_0s+i,1)}_{y_i,y_{i+1}}
\] 
grows faster than $\sqrt{n \ln n}$ in $n$, for any $s$. 
 For instance, we could require that 
 $p^{(j,1)}_{y_i,y_{i+1}}\geq\del_j$ and $P(\xi_j=y_1)\geq r_j$ for some $\del_j,r_j>0$ and all $i$'s, which will give us linear growth if $\del_i$'s and $r_j$'s are bounded from below by some positive constant, while in general we can impose certain restrictions on the $\del_i$ and $r_j$'s to obtain the desired growth rate (e.g when $\frac1{\del_j}$ is at least of logarithmic order in $j$ and $r_j$ decays sufficiently slow to $0$ as $j\to\infty$).
Note that in these circumstances condition (i) from Theorem \ref{LLT RandEnv} trivially holds true since $\cL_z^{(y)}$ are locally constant in $y$ around $y_1,...,y_{m_0}$.

A close but more general situation is the case when the maps $y\to T_y,f_y,u_y$ are locally constant around 
$y_1,y_2,...,y_{m_0}$ and $\{\xi_j\}$ is an inhomogeneous Markov chain so that, 
\[
P(\xi_{i+1}\in U|\xi_i=x)=\int_{U} p_i(x,y)d\eta(y),\,\,\forall x\in V
\]
for any sufficiently small neighborhoods $U$ and $V$ some $y_i$ and $y_j$, respectively.
Here $\eta$ is some probability measure on $\cY$ which assigns positive mass to open sets, and $p_i(x,y)$ are functions which are bounded from below by some positive constants $\del_i$. In this case we have
\[
\sum_{m=1}^n P\big\{\xi_{mm_0s+i}\in U_{i};\,\forall\,1\leq i\leq m_0s\}\geq \big(\prod_{j=1}^{m_0}\eta(U_j)\big)^{s}
\sum_{m=1}^{n}\prod_{i=1}^{m_0s}\del_{mm_0s+i}.
\]
Imposing some restrictions on the $\del_i$'s we will get that the above right hand side grows faster than $\sqrt{n\ln n}$. For instance, when $\frac1{\del_j}\leq c\ln j$ then, for any $s$,  
$
\prod_{i=1}^{m_0s}\del_{mm_0s+i}
$
is of order $\frac1{\ln^\te m}$ in $m$,  where $\te=m_0s$, and so 
\[
\sum_{m=1}^{n}\prod_{i=1}^{m_0s}\del_{mm_0s+i}
\]
is at least of order $\frac{n}{\ln^\te n}$ in $n$. When $\cY$ is compact and the densities $p_i(x,y)$ are bounded from below and above by some positive constant then  condition (\ref{phi}) holds true with $\phi(n)$ of the form $\phi(n)=ae^{-nb}$ for some $a,b>0$ (this follows from Section 6 in \cite{book} and the arguments in the proof of Lemma 7.1.1. in \cite{FunCLT}), and in this case the $\del_j$'s are bounded from below.
Still, relying on (\ref{phi}) our arguments allow that $\liminf_{i\to\infty}\del_i=0$'s,  as $i\to\infty$, as described above.

Finally, let $(\cQ,T,\textbf{m})$ be a mixing Young tower (see \cite{Young1} and \cite{Young2}) whose tails $\nu\{R\geq n\}$ are of order $n^{-a}$ for some $a>0$ (here $\nu$ is the original measure on the tower, $R$ is the return time function 
and $\textbf{m}$ is the invariant mixing probability measure).
Consider the case when $\xi_j(q)=H_j(T^jq)$, where $q$ is distributed according to $\textbf{m}$ and $H_j$ is a function that is constant on elements of the partition defining the tower. Then by
(7.6) in \cite{FunCLT}  the inequality (\ref{phi}) holds true with a summuable sequence $\phi(n)$ 
if the tails of the tower decay polynomially and sufficiently fast, and the function $H_j$ is measurable with respect to the partition which defines the tower.
 Next, suppose that $T$ has a periodic point $q_0$ with period $m_0$ and that $H_j(T^mq_0):=y_m$ does not depend on $j$ for each $m=0,1,...,m_0-1$. We will show now that (\ref{PropGrowth}) holds true with the above $y_i$'s. Let $U_i$ be a neighborhood of $y_i$, where $i=0,1,...m_0-1$, and let $s\geq 1$. Observe that $(\xi_{msm_0+i}(q))_{i=1}^{sm_0}$ takes the value $(y_1,y_2,...,y_{0})^{\otimes s}$ when $q$ lies in a set of the form $T^{-msm_0}A_s$, for some open  neighborhood $A_s$ of the periodic point $q_0$. Here the power $s$ stands for concatenation: $a^{\otimes s}=aaa...a$.
Since $\textbf{m}$ is $T$-invariant we derive that
\[
\sum_{m=1}^{n}P\big\{\xi_{msm_0+i}\in U_{i};\,\forall\,1\leq i\leq m_0s\}\geq 
n\textbf{m}(A_s)
\]
and so (\ref{PropGrowth}) holds true (as $\textbf{m}(A_s)>0$). We note that the case when all of the $H_j$'s are H\"older continuous uniformly in $j$ can be considered, since then we can approximate (in the proof of Theorem \ref{LLT RandEnv}) the $H_j$'s by functions which are constant on the above partitions, and use again (7.6) in \cite{FunCLT} .

We can also consider the case when each $\xi_j$ depends only the first coordinate in the following model:

\paragraph{\textbf{Non-stationary subshifts of finite type}.}
Let $d_j,\,j\in\bbZ$ be a family of positive integers so that $d_j\leq d$ for some $d\in\bbN$ and all $j$'s. Let $A_j=A_j(a,b)$ be a family of matrices of sizes $d_j\times d_{j+1}$ whose entries are either $0$ or $1$, so that all the entries of $A_{j+n_0}\cdots A_{j+n_0-1}\cdot A_{j+1}$ are positive, for some $n_0\geq1$ and all $j$'s.
 Let the compact space $\cX$ be given by 
\[
\cX=\{1,2,...,d\}^{\bbN\cup\{0\}}
\] 
and for each $j$, and let $d(x,y)=2^{-\min\{n\geq0: x_n\not=y_n\}}$.
For each integer $j$ set
\[
\cE_j=\{(x_{j+m})_{m=0}^\infty\in\cX:\,x_{j+m}\leq d_{j+m}\,\text{ and }\, A_{j+m}(x_{j+m},x_{j+m+1})=1,\,\,\forall m\geq 0\}
\]
and define $T_j:\cE_j\to\cE_{j+1}$ by 
\[
T_j(x_{j},x_{j+1},x_{j+2},..)=(x_{j+1},x_{j+2},...).
\]
We also set $\gam=2$ and $\xi=1$, so the inequality $d(x,y)<\xi$ means that $x_0=y_0$. In this nonstationary subshift case we have the following result: 

\begin{theorem}\label{SFT pros}
(i) There exist constants $C_1,C_2>0$ so that fir any symbols $a_j,...,a_{j+r}$, where $j\in\bbZ$ and $r\geq0$ we have
\[
C_1\leq \mu_j\{(x_m)_{m=j}^\infty: x_i=a_i\,\,\,\forall \in[j,j+r]\}/e^{S_j^n\phi(a^{(j)})-\ln\la_{j,r}(0)}\leq C_2
\]
where $a^{(j)}\in\cX_j$ is any point so that $a^{(j)}_i=a_i$ for any $j\leq i\leq j+r$.

(ii) There exist constants $C>0$ and $\del\in(0,1)$ so that for
any integer $j$, $r,s,n\geq1$, symbols $a_j,...,a_{j+r}$ and $b_{j+r+n},...,b_{j+r+n+s}$ and 
cylinder sets
\[
A=\{(x_m)_{m=j}^\infty: x_i=a_i\,\,\forall \in[j,j+r]\}\subset\cE_j
\] 
and 
\[
B=\{(x_m)_{m=j}^\infty: x_i=b_i\,\,\forall \leq i\in[j+r+n, j+r+n+s]\}\subset\cE_{j}
\]
we have
\begin{equation}\label{SFT mix}
|\mu_j(A\cap B)-\mu_j(A)\mu_{j}(B)|\leq C\mu_j(A)\mu_{j}(B)\del^{n}.
\end{equation}
Namely, the $\sig$-algebras generated by the cylinder sets are exponentially fast $\psi$-mixing (uniformly in $j$).
\end{theorem}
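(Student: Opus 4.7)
For part (i), the strategy is the standard Ruelle--Bowen Gibbs argument adapted to the sequential setup. Iterating the dual identity $(\cL_0^{(j)})^*\nu_{j+1}^{(0)}=\la_j(0)\nu_j^{(0)}$ gives
\[
\nu_j^{(0)}(\mathbf{1}_A)=\la_{j,r+1}(0)^{-1}\,\nu_{j+r+1}^{(0)}\bigl(\cL_0^{j,r+1}\mathbf{1}_A\bigr).
\]
For the cylinder $A=[a_j,\ldots,a_{j+r}]$, every $y\in\cE_{j+r+1}$ has at most one preimage in $A$ under $T_j^{r+1}$, and this preimage $x_y=(a_j,\ldots,a_{j+r},y_{j+r+1},y_{j+r+2},\ldots)$ exists precisely on the admissibility set $V=\{y:A_{j+r}(a_{j+r},y_{j+r+1})=1\}$. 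Bounded distortion for H\"older potentials in the $2^{-n}$ metric---based on the summable geometric series $\sum_{k\ge 1}2^{-k\alpha}$---gives $|S_{j,r+1}f(x_y)-S_{j,r+1}f(a^{(j)})|\le B'$ uniformly in $j$, $r$ and the choice of the representative $a^{(j)}$, so $\cL_0^{j,r+1}\mathbf{1}_A(y)$ equals $e^{S_{j,r+1}f(a^{(j)})}\mathbf{1}_V(y)$ up to a factor in $[e^{-B'},e^{B'}]$. Combined with the uniform two-sided bounds $0<c\le h_j^{(0)}\le C$ from Theorem~\ref{RPF SDS} and a uniform positive lower bound on $\nu_{j+r+1}^{(0)}(V)$ (upper bound trivial; lower bound obtained by iterating the dual identity $n_0$ further steps and invoking the hypothesis that $A_{j+r}\cdots A_{j+r+n_0-1}$ has all entries positive), this yields the two-sided Gibbs bound on $\mu_j(A)=\int\mathbf{1}_A\,h_j^{(0)}\,d\nu_j^{(0)}$.

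For part (ii), the idea is that although $\mathbf{1}_A$ has H\"older norm growing like $2^{r\alpha}$, a single iterate of the normalized transfer operator smooths it into a function with H\"older norm bounded uniformly in $r$ and $j$. Let $\tilde\cL_0^{(j)}$ be the normalized operator from (\ref{RPF2}) for which $\tilde\mu_j=\mu_j$, $\tilde h_j^{(0)}\equiv 1$, $\tilde\la_j(0)=1$, and which is $L^2$-dual to the Koopman map $g\mapsto g\circ T_j$. Write $\mathbf{1}_B=\mathbf{1}_{B''}\circ T_j^{r+1}$ where $B''\subset\cE_{j+r+1}$ is the corresponding cylinder; duality then gives
\[
\mu_j(A\cap B)=\int\tilde\cL_0^{j,r+1}\mathbf{1}_A\cdot\mathbf{1}_{B''}\,d\mu_{j+r+1}.
\]
Set $\phi=\tilde\cL_0^{j,r+1}\mathbf{1}_A/\mu_j(A)$, which integrates to $1$ against $\mu_{j+r+1}$. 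The bounded distortion argument of part~(i) applied once more shows $\|\phi\|_\alpha\le C$ uniformly in $j,r$ (this is the main technical step, discussed below). Writing next $\mathbf{1}_{B''}=\mathbf{1}_{B'''}\circ T_{j+r+1}^{n-1}$ for the cylinder $B'''\subset\cE_{j+r+n}$ and applying the exponential decay of correlations (\ref{ExpDec}) to the pair $(\phi,\mathbf{1}_{B'''})$ yields
\[
\Big|\int\phi\cdot\mathbf{1}_{B'''}\circ T_{j+r+1}^{n-1}\,d\mu_{j+r+1}-\mu_{j+r+n}(B''')\Big|\le A\|\phi\|_\alpha\,\mu_{j+r+n}(B''')\,\del^{n-1}.
\]
Since $(T_j)_*\mu_j=\mu_{j+1}$ telescopes to $\mu_{j+r+n}(B''')=\mu_j(B)$, multiplying through by $\mu_j(A)$ delivers (\ref{SFT mix}), absorbing $\del^{-1}$ into the constant.

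The technical crux is the uniform H\"older bound on $\phi$. On $V$ one has the explicit formula
\[
\phi(y)=\frac{e^{S_{j,r+1}f(x_y)}\,h_j^{(0)}(x_y)}{\la_{j,r+1}(0)\,h_{j+r+1}^{(0)}(y)\,\mu_j(A)},
\]
and by part~(i) the denominator scales like $e^{S_{j,r+1}f(a^{(j)})}$, reducing the sup bound to the already-used distortion estimate. For the variation, if $y,y'\in V$ agree in their first $k$ coordinates then $x_y,x_{y'}$ agree in the first $r+1+k$ coordinates, so
\[
|S_{j,r+1}f(x_y)-S_{j,r+1}f(x_{y'})|\le B\sum_{m=0}^{r}2^{-(r-m+k+1)\alpha}\le\frac{B\,2^{-k\alpha}}{1-2^{-\alpha}},
\]
uniformly in $r$, and the H\"older variations of $h_j^{(0)}$ and $1/h_{j+r+1}^{(0)}$ contribute similar $O(2^{-k\alpha})$ terms. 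For pairs $y,y'$ separated by membership in $V$, they must differ in the $(j+r+1)$-th coordinate, hence $d(y,y')=1$ and the trivial bound $|\phi(y)-\phi(y')|\le\|\phi\|_\infty$ suffices. Everything else is formal manipulation of the duality identities supplied by Theorem~\ref{RPF SDS}.
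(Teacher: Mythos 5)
Your argument is correct and is exactly the route the paper intends: the paper's entire proof is the remark that the result ``is proved similarly to \cite{Bow}'', and what you have written out is precisely Bowen's Gibbs-property and $\psi$-mixing argument transplanted to the sequential RPF framework of Theorem \ref{RPF SDS} (iterating the dual eigen-identity plus bounded distortion for (i); smoothing $\mathbf{1}_A$ by one block of normalized transfer operators into a uniformly H\"older density before invoking (\ref{ExpDec}) for (ii)). The two points that genuinely need care in the nonstationary setting --- the uniform lower bound on $\nu_{j+r+1}^{(0)}(V)$ via the primitivity of the products $A_{j+n_0}\cdots A_{j+1}$, and the uniform-in-$r$ H\"older bound on $\phi$ --- are both handled correctly.
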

This result is proved similarly to \cite{Bow}.



\subsection{Random sequential dynamical environments}\label{RandSeqDyEnv}

\subsubsection{\textbf{Random sequential distance expanding environments}} 
Let $\cY$, $\cE_y,T_y,f_y$ and $u_y$ be as in the beginning of Section \ref{Two sided}. 
We assume here that $\cY$ is a metric space and let $\cB$ be its Borel $\sig$-algebra. Let $\cY_j\subset\cY,\,j\geq0$ be a family of closed sets, $\te_j:\cY_{j}\to\cY_{j+1},\,j\in\bbZ$ be family of measurable maps and $P_j,\,j\in\bbZ$ be family of probability measures on $\cY$ which are supported on $\cY_j$, respectively, so that $(\te_j)_*P_j=P_{j+1}$ for each $j$.
For each $j$ and $m\geq1$ set $\te_j^m=\te_{j+m-1}\circ\cdots\circ\te_{j+1}\circ\te_j$ and 
consider the case when $\zeta_j=\te_0^j\textbf{y}_0$, where $\textbf{y}_0$ is distributed according to $P_0$.
In this section we will start with a certain type of one sided sequences  and consider iterates of the form 
\[
\cL_0^{(\zeta,n)}:=\cL_0^{(\zeta_{j-1})}\circ\cdots\circ\cL_0^{(\zeta_1)}\circ\cL_0^{(\zeta_0)}.
\] 
Namely, we view here the $\zeta_j$'s as a sequential dynamical random environment and consider (random) one sided sequences of maps $T_j$ and functions $f_j$ and $u_j$ of the form 
\[
T_j=T_{\zeta_j},\,f_j=f_{\zeta_j}\,\,\text{ and }\,\,u_j=u_{\zeta_j}.
\]
Note that we can not apply directly Theorem \ref{RPF SDS} and all the other results from Section \ref{SDSsec}  since we only have one sided sequences. In order to overcome this difficulty we will need the following.
Let $\hat\cY$ be the space of all sequences $\hat y=(y_k)_{k=-\infty}^\infty\in\cY^\bbZ$ so that $y_{k+1}=S_ky_k$ for each $k$, let $\sig:\hat\cY\to\cY$ be the shift map given by $\sig x=(x_{k+1})_{k=-\infty}^\infty$ and set $\xi_k=\sig^k\xi_0$ where $\xi_0$ is distributed according to the measure $\hat P$ induced on $\hat\cY$ by 
the sequence of finite dimensional distributions given by 
\[
\hat P_k\{y: y_i\in A_i;\,\,\forall -k\leq i\leq k\}=P_{-k}\big\{\bigcap_{i=-k}^{k}(\te_{-k}^{i})^{-1}A_i\big\}.
\]
Note that the Kolmogorov extension theorem indeed can be applied (i.e. the family $\{\hat P_k\}$ is consistent) since $(\te_j)_*P_j=P_{j+1}$ for each $j$. Henceforth, we will refer to the process $\{\xi_j:\,j\in\bbZ\}$ as the ``invertible extension" of the process $\{\zeta_j:\,j\geq0\}$. Now, we can view $T_j,f_j$ and $u_j$ as functions of $\xi_j$: they depend only on the $0$-th coordinate of $\xi_j$ (so now $T_j$, $f_j$ and $u_j$ are defined also for negative $j$'s). Henceforth, $\{\la_j(z):\,j\in\bbZ\}$, $\{h_j^{(z)}:\,j\in\bbZ\}$ and $\{\nu_j^{(z)}:\,j\in\bbZ\}$ will denote the
RPF triplets corresponding to the (random) family of operators $\cL_z^{(j)}=\cL_z^{(\xi_j)}:=\cL_z^{(\pi_0\xi_j)}$, where $\pi_0 y=y_0$.

In the following section we will provide general conditions under which the results stated in Section \ref{SDSsec} hold true. Note that formally, we will show that the limit theorems stated in Section \ref{SDSsec} hold true with the extension 
$\{\xi_j:\,j\in\bbZ\}$ as the random environment, but when the random Gibbs measure $\mu_j$ given by $d\mu_j=h_j^{(0)}d\nu_j^{(0)}$ depend only on $\xi=\{\zeta_j:\,j\geq0\}$ then we can formulate all the limit theorems without passing to the invertible extension. First, taking a careful look at the arguments in Chapter 4 of \cite{book}, we see that the functional $\nu_j^{(z)}$ depend only on $\zeta_j$. Therefore, $\mu_j$ depends only on $\xi_j$ if the function $h_j^{(0)}$ is deterministic. We refer the readers to Theorem \ref{II'} (and its proof) for conditions which guarantee that $h_j^{(0)}=h$ for any $j$, for some deterministic function $h$ (take there $T_j$ in place of $S_j$).

\subsubsection{\textbf{The LCLT}}
We assume here that for any Lipschitz function $g$ on $\cY$, an integer $s\geq1$ and $t\geq0$ we have
\begin{equation}\label{Lip Corr Ass}
P_0\big\{\big|\sum_{j=0}^{n-1}g\circ\te_0^{js}-\sum_{j=0}^{n-1}\bbE_{P_0}g\circ\te_0^{js}\big|\geq t+c_1\big\}\leq c_2e^{-c_3\frac{t^2}{n}}
\end{equation}
where $c_1, c_2$ and $c_3$ are some positive constants which may depend on $g$. The inequality (\ref{Lip Corr Ass}) holds true when $\te_j$'s are maps satisfying  the assumptions from Section \ref{SDSsec}  and $P_j$'s are the appropriate (sequential) Gibbs measures corresponding to these maps.

In this section we will prove the following:
\begin{theorem}\label{III}
Suppose that (\ref{Lip Corr Ass}) holds true, that there exists a point $y_0\in\bigcap_{j=0}^\infty\cY_j$ so that $\te_jy_0=y_0$ for each $j$ and that 
the  maps $\te_j$ are H\"older continuous with exponent $\be\in(0,1]$ and H\"older constant less or equal to $K$, for some constants $\be$ and $K$ which do not depend on $j$. Assume, in addition, that
for any open neighbourhood $V$ of $y_0$ we have 
\begin{equation}\label{PerII}
\lim_{n\to\infty}\frac{\sum_{j=1}^{n}P_{j}(V)}{\sqrt{ n\ln n}}=\infty
\end{equation}
and that conditions (A1) and (A2) from Theorem \ref{LLT RandEnv} hold true with $\textbf{L}_{it}=\cL_{it}^{(y_0)}$.
Then Assumption \ref{GenPerEq} holds true $P_0$-a.s. with $m_0=1$ and the above $\textbf{L}_{it}$.
\end{theorem}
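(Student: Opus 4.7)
My plan mirrors the structure of the proof of Theorem~\ref{LLT RandEnv}, with the mixing-based counting step there replaced by the combination of the H\"older dynamics of $\{\te_j\}$ (to produce long stretches of $\zeta_{m+i}$'s near $y_0$) and the concentration inequality~\eqref{Lip Corr Ass} (to count such $m$). Fix a compact $J\subset I_h$, a sufficiently large $s\in\bbN$, and $\del_0\in(0,1)$; let $B_J$ be a uniform upper bound on $\|\cL_{it}^{j,n}\|/\la_{j,n}(0)$ as in the proof of Theorem~\ref{LLT RandEnv}. Assumption (A1) provides an open neighbourhood $V'$ of $y_0$ on which $\sup_{t\in J}\|\cL_{it}^{(y)}-\cL_{it}^{(y_0)}\|<\ve_0$, where $\ve_0$ is chosen so that $sB_J^3\ve_0<1-\del_0$. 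Exactly as in the derivation of~\eqref{So that}, whenever $\zeta_{m+i}\in V'$ for all $0\leq i<s$, one obtains $B_J\|\cL_{it}^{m,s}-\textbf{L}_{it}^s\|<1-\del_0$ uniformly in $t\in J$.

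The new dynamical input is the following. Since $y_0$ is a common fixed point of the uniformly $\be$-H\"older maps $\te_j$ with constant $K$, compositions of at most $s$ of them satisfy $d(\te_j^i y,y_0)\leq K^{(1-\be^i)/(1-\be)}d(y,y_0)^{\be^i}$ for all $j\geq 0$ and $0\leq i\leq s-1$. Because $s$ is fixed, one may choose nested open neighbourhoods $V_2\subset V_1\subset V'$ of $y_0$ so that $\te_j^i V_1\subset V'$ for every $j\geq 0$ and every $0\leq i<s$. Consequently $\zeta_m\in V_1$ already forces $\zeta_{m+i}\in V'$ for $0\leq i<s$, and hence the good event $\{B_J\|\cL_{it}^{m,s}-\textbf{L}_{it}^s\|<1-\del_0\ \forall t\in J\}$ occurs.

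It therefore suffices to show that $N_n:=|\{0\leq m<n:\zeta_m\in V_1\}|$ grows faster than $\ln n$ along $P_0$-almost every sample path (in fact it will grow faster than $\sqrt{n\ln n}$). Fix a Lipschitz bump $g:\cY\to[0,1]$ with $\bbI_{V_2}\leq g\leq \bbI_{V_1}$. Since $(\te_0^j)_*P_0=P_j$, we have $\bbE_{P_0}[g\circ\te_0^j]=\int g\,dP_j\geq P_j(V_2)$, and~\eqref{PerII} gives that $\sum_{j=0}^{n-1}\bbE_{P_0}[g\circ\te_0^j]$ grows faster than $\sqrt{n\ln n}$. Applying~\eqref{Lip Corr Ass} to $g$ (with the auxiliary index there equal to $1$) at the scale $t_n=\kappa\sqrt{n\ln n}$ for a sufficiently large $\kappa$, and invoking Borel--Cantelli exactly as in the proof of Theorem~\ref{LLT RandEnv}, yields that $P_0$-almost surely
\[
\Big|\sum_{j=0}^{n-1}g(\zeta_j)-\sum_{j=0}^{n-1}\bbE_{P_0}[g\circ\te_0^j]\Big|=O\big(\sqrt{n\ln n}\,\big)
\]
for all sufficiently large $n$. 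Combined with the mean bound and $g\leq\bbI_{V_1}$, this forces $N_n\geq\sum_{j=0}^{n-1}g(\zeta_j)$ to grow faster than $\sqrt{n\ln n}$ $P_0$-almost surely, verifying~\eqref{GenPerEq} with $m_0=1$.

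The principal technical point is in paragraph two: ensuring that a fixed-$s$ window of the nonautonomous orbit stays inside $V'$ merely because it starts in a small ball around $y_0$. Uniform H\"older continuity with the same $\be$ and $K$ across all $j$, together with $y_0$ being a common fixed point, is precisely what makes a single neighbourhood $V_1$ work for every $m$ and every $0\leq i<s$; without that uniformity, cumulative H\"older constants along the sequential dynamics would preclude such a $V_1$. The rest is a routine adaptation of the concentration-plus-Borel--Cantelli template already deployed for the mixing environment.
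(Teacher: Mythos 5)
Your proof is correct and follows essentially the same route as the paper's: the uniform H\"older bound together with the common fixed point $y_0$ yields a neighbourhood of $y_0$ (independent of the starting index) whose $s$-step forward orbit stays in $V'$, and then a Lipschitz bump function, the concentration inequality \eqref{Lip Corr Ass}, and Borel--Cantelli give the required super-$\sqrt{n\ln n}$ count. The only (cosmetic) difference is that you count all starting indices $m$ and apply \eqref{Lip Corr Ass} with step $1$, whereas the paper counts blocks starting at multiples of $s$ and pulls the neighbourhood back through $\te_0^{ms}$; your variant matches the hypothesis \eqref{PerII} slightly more directly.
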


Note that when verifying Assumption \ref{GenPerEq}, we can use the one sided sequence $\{\zeta_j\}$ without passing to its invertible extension.
When $\te_j,\,j\geq0$ are distance expanding maps which satisfy one of  Assumptions \ref{Ass maps}-\ref{Ass maps 3}, assuming that all of them have the same fixed point, then we can take $P_j$ to be the appropriate $j$-th Gibbs measure. In this case, by Lemma 5.10.3 in \cite{book} we have $P_j\big(B_j(y_0,r)\big)\geq Cr^q$ for some $q>0$, which implies that  (\ref{PerII}) holds true, since in this case the numerator grows linearly fast in $n$. Note that a common fixed point exists, for instance when all $\cY_j$ are, the same torus and all $S_j$'s vanish at the origin, and when $S_j$'s form a non-stationary subshift of finite type  so  that the matrices $A_j,\,j\geq0$ defining the shift satisfy that $A_j(a,a)=1$ for some $a\in\bbN$ (and then we can take $y_0=(a,a,a,...)$).

\begin{proof}[Proof of Theorem \ref{III}]
Let $s\geq1$.
As in the proof of Theorem \ref{LLT RandEnv}, it is sufficient to show that for any neighborhood $U$ of $y_0$ in 
$\cY$ we have
\[
\lim_{n\to\infty}\frac{\sum_{m=0}^{n-s}\bbI_{\Gam_m}(\om)}{\sqrt{n\ln n}}=\infty,\,P-\text{a.s.}
\]
where $\Gam_m=\{\om:\,(\zeta_{ms}(\om),\zeta_{ms+1}(\om),...,\zeta_{ms+s-1}(\om))\in U\times U\times\cdots\times U\}$, and $\bbI_A$ is the indicator function of a set $A$.
Equivalently, we need to show that for $P_0$-almost any $y$ we have  
\[
\lim_{n\to\infty}\frac{\sum_{m=0}^{n-s}\bbI_{\Del_m}(y)}{\sqrt{n\ln n}}=\infty
\]
where 
\[
\Del_m=\bigcap_{j=0}^{s-1}(\te_0^{ms+j})^{-1}U=(\te_{0}^{ms})^{-1}U_{m,s}
\]
and $U_{m,s}=U\cap\te_{ms}^{-1}U\cap(\te_{ms}^2)^{-1}U\cap...\cap(\te_{ms}^{s-1})^{-1}U$. 
Since $y_0$ is a common fixed point, for any $r>0$ we have
\[
B_m(y_0,r^{\frac 1\beta}K^{-\frac 1\beta})\subset\te_m^{-1}B_{m+1}(y_0,r)
\]
where for each $m$, $x\in\cY_m$ and $\del>0$ the set $B_m(x,\del)$ denotes an open ball in $\cY_m$ around $x$ with radius $\del$.
Therefore, $U_{m,s}$ contains an open ball $V_{m,s}=B_m(y_0,2r_s)=B(y_0,2r_s)\cap\cY:=V_s$ around $y_0$ in $\cY_m$, whose radius does not depend on $m$ (here $B(y_0,2r_s)$ is the corresponding ball in $\cY$). Hence, it is sufficient to show that 
\begin{equation}\label{Need}
\lim_{n\to\infty}\frac{\sum_{m=0}^{n-s}\bbI_{V_s}\circ \te_0^{ms}}{\sqrt{n\ln n}}=\infty,\,P_0-\text{a.s.}.
\end{equation}
Let $f$ be Lipschitz function so that 
\[
\bbI_{B(0,r_s)}\leq f\leq\bbI_{V_s}=\bbI_{B(0,2r_s)}.
\] 
Then 
\[
\sum_{m=0}^{n-s}\bbI_{V_s}\circ \te_0^{ms}\geq\sum_{m=0}^{n-s}f\circ\te_0^{ms}.
\]
Note that $\bbE_{P_0}f\circ\te_0^{ms}=\bbE_{P_{ms}}f=\int fdP_{ms}\geq P_{ms}(B(0,r_s))$.
Taking $t$ of the form $t=t_n=c\sqrt{n\ln n}$ in  (\ref{Lip Corr Ass}), for an appropriate $c$, and using the Borel-Cantelli lemma we derive that $P_0$-a.s. for any sufficiently large $n$,
\[
\big|\sum_{m=0}^{n-s}f\circ\te_0^{ms}-\sum_{m=0}^{n-s}\bbE_{P_0}f\circ\te_0^{ms}\big|\leq C \sqrt{n\ln n}
\]
where $C$ is some constant. The letter inequality together with the former inequalities and (\ref{PerII}) imply (\ref{Need}).
\end{proof}

\begin{remark}
The proof of Theorem \ref{III} proceeds similarly if we assume that there exists $y_0,y_1,...,y_{m_0-1}\in\cY$ so that for any $j$ and $i=0,1,...,m_0-1$ we have $S_{j+i}y_i=y_{i+1}$, where $y_{m_0}:=y_0$, that (\ref{PropGrowth}) holds true, with $y_{i-1}$ in place of $y_i$ appearing there and with $\zeta_{mm_0s+i}$ in place of $\xi_{mm_0s+i}$, 
and that for any family of Lipschitz functions $g_j,\,j\geq0$ with Lipschitz constant less or equal to $1$ so that 
$\sup|g_j|\leq1$ we have
\begin{equation}\label{Lip Corr Ass.1}
P_0\big\{\big|\sum_{j=0}^{n-1}g_j\circ\te_0^{j}-\sum_{j=0}^{n-1}\bbE_{P_0}g_j\circ\te_0^{js}\big|\geq t+c_1\big\}\leq c_2e^{-c_3\frac{t^2}{n}}
\end{equation}
for some positive constants $c_1$ and $c_2$ (which do not depend on the $g_j$'s). In this case we require that the spectral radius of 
\[
\textbf{L}_{it}=\cL_{it}^{(y_{m_0-1})}\circ\cdots\circ\cL_{it}^{(y_1)}\circ\cL_{it}^{(y_0)}
\]
is less than $1$ for any $t\in I_h$. 
In particular we can consider non-stationary subshifts of finite type, with the property that for some $a_1,a_2,...a_{m_0}\in\bbN$ and all $j$'s we have $A_j(a_i,a_{i+1})=1$, where $a_{m_0+1}:=a_1$, which means that the periodic word $(a_1,a_2,a_3,...a_{m_0},a_1,a_2,...,a_{m_0},...)=(a_1,a_2,...,a_{m_0})^{\otimes\bbN}$ belongs to all of the $\cE_j$'s.
\end{remark}

\subsubsection{\textbf{Existence of limiting logarithmic moment generating functions}}
We assume here that $\cY$ is a compact metric space.
Let $S_j:\cY\to\cY$ be a family of maps satisfying all the conditions specified in  Section \ref{SDSsec} with $\cE_j=\cY$, and consider the case when $\te_j=S_j$ for each $j$.
Let $r_j:\cY\to\bbR$ be a family of maps so that the H\"older norms $\|r_j\|_\al$  are bounded in $j$, and let 
$(\boldsymbol{\la}_j(0),\textbf{h}_j^{(0)},\boldsymbol{\nu}_j^{(0)})$ be the RPF triplet corresponding to the operators $\mathscr{L}_0^{(j)}$ given by 
\[
\mathscr{L}_0^{(j)}g(x)=\sum_{y\in S_j^{-1}\{x\}}e^{r_j(y)}g(y).
\]  
In these circumstances, we take $P_j=\boldsymbol{\mu}_j$, where $\boldsymbol{\mu}_j$ is given by 
$d\boldsymbol{\mu}_j=\textbf{h}_j^{(0)}d\boldsymbol{\nu}_j^{(0)}$. Namely, we consider here a random sequential 
environment generated by a two sequence of distance expanding maps $S_j$.

We will first need the following
\begin{theorem}\label{I}
(i) The random pressure function
$\Pi_j(z)$ depends only on $\zeta_j$.

(ii) When the maps $S_j$ are H\"older continuous with the same exponent and H\"older constants which are bounded in $j$
then $P$-a.s. we have 
\[
\lim_{n\to\infty}\Big|\frac1n\sum_{j=0}^{n-1}\Pi_j(z)-\frac1n\sum_{j=0}^{n-1}\bbE_P\Pi_j(z)\Big|=0.
\] 
In particular,
when the distribution of $\zeta_j$ does not depend on $j$ (i.e, when $P_j=\boldsymbol{\mu}_j$ does not depend on $j$) then the limit $\Pi(z)$ from Theorem  \ref{Gen: Var, LD, MD}
exists and $\Pi(z)=\bbE_P\Pi_0(z)$.
\end{theorem}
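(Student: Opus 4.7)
By construction of the invertible extension, $\pi_0 \xi_j = S_0^j \pi_0 \xi_0 = \zeta_j$, so $\cL_z^{(j)}=\cL_z^{(\pi_0\xi_j)}$ depends only on $\zeta_j$. The functional $\nu_j^{(z)}$ was obtained in the proof of Theorem \ref{stability} as the uniform limit, as $n\to\infty$, of the normalised pushforwards $F(j,n,z) = \te_{j+n}(\cL_z^{j,n}(\cdot))/\te_{j+n}(\cL_z^{j,n}\textbf{1})$; the composition $\cL_z^{j,n}=\cL_z^{(j+n-1)}\circ\cdots\circ\cL_z^{(j)}$ only involves operators at times $j,j+1,\dots,j+n-1$, each depending on $\zeta_{j+k} = S_{j+k-1}\circ\cdots\circ S_j\zeta_j$, which is a deterministic function of $\zeta_j$. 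Hence $\nu_j^{(z)}$, and in turn $\la_j(z)=\nu_{j+1}^{(z)}(\cL_z^{(j)}\textbf{1})$ and $\Pi_j(z)=\log(\la_j(z)/\la_j(0))$, are measurable with respect to $\sig\{\zeta_j\}$.

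\textbf{Plan for (ii).} Part (i) lets us write $\Pi_j(z)=g_z^{(j)}(\zeta_j)$ for deterministic functions $g_z^{(j)}:\cY\to\bbC$ uniformly bounded in $j$ and in $z$ (by Theorem \ref{MomThm.0} (i)). The plan is to apply Theorem \ref{Mart thm} to the \emph{base} sequential distance-expanding system $(S_j)$ with initial Gibbs measure $\boldsymbol{\mu}_0=P_0$ and observables $u_j=g_z^{(j)}$: provided $g_z^{(j)}\in\cH$ with H\"older norm bounded uniformly in $j$, it yields
\[
P_0\Bigl\{\Bigl|\sum_{j=0}^{n-1}\Pi_j(z)-\sum_{j=0}^{n-1}\bbE_P\Pi_j(z)\Bigr|\geq t+C_1\Bigr\}\leq 2e^{-t^2/(4nC)}.
\]
Choosing $t=t_n$ so that $t_n/n\to 0$ while the right-hand side is summable in $n$ (e.g.\ $t_n=\sqrt n\,(\ln n)^2$) and applying the Borel--Cantelli lemma gives the required almost-sure convergence $\frac1n\bigl|\sum_{j<n}\Pi_j(z)-\sum_{j<n}\bbE_P\Pi_j(z)\bigr|\to 0$. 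In the stationary case $\bbE_P\Pi_j(z)=\bbE_P\Pi_0(z)$ is independent of $j$, so the Cesaro average of expectations equals $\bbE_P\Pi_0(z)$ and we obtain $\Pi(z)=\bbE_P\Pi_0(z)$ a.s., establishing in particular that the limit $\Pi(z)$ from Theorem \ref{Gen: Var, LD, MD} exists.

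\textbf{Main obstacle.} The delicate step is verifying that $g_z^{(j)}\in\cH$ with uniformly bounded H\"older norm, since Theorem \ref{Mart thm} is formulated for H\"older observables. The natural route is to truncate: the finitary quantity
\[
g_{j,N}(z)=\frac{\te_{j+N+1}\bigl(\cL_z^{j,N+1}\textbf{1}\bigr)}{\te_{j+N+1}\bigl(\cL_z^{j+1,N}\textbf{1}\bigr)}
\]
approximates $\la_j(z)$ with error $O(\del^N)$ by (\ref{Exponential convergence}), and as a function of $\zeta_j$ depends only on the orbit segment $\zeta_j,S_j\zeta_j,\ldots,S_j^N\zeta_j$. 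Its H\"older modulus in $\zeta_j$ is then controlled by the uniform (in $j$) H\"older constants of the $S_j$ together with the quantitative stability estimates in Theorem \ref{stability}, using H\"older dependence of $y\mapsto\cL_z^{(y)}$ in operator norm. Combining these uniform-in-$j$ H\"older bounds on the $g_{j,N}(z)$ with their exponentially fast approximation of $\la_j(z)$ transfers to a uniform H\"older bound on $g_z^{(j)}$ itself, enabling the application of Theorem \ref{Mart thm} above.
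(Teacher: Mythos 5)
Your part (i) and the overall strategy for part (ii) — approximate $\Pi_j(z)$ by the finitary quantities $g_{j,N}(z,\zeta_j)$ built from $\cL_z^{j,N+1}\textbf{1}$, apply the exponential concentration inequality of Theorem \ref{Mart thm} to the base system $(S_j,\boldsymbol{\mu}_j)$, and conclude by Borel--Cantelli — are exactly the paper's. The difference, and the gap, is in your final step. You claim that the uniform-in-$j$ H\"older bounds on the truncations $g_{j,N}(z,\cdot)$, combined with the exponentially fast approximation $|\la_j(z)-g_{j,N}(z,\zeta_j)|\leq C\del^N$, ``transfer to a uniform H\"older bound on $g_z^{(j)}$ itself''. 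This does not follow: the H\"older constant of $\zeta_j\mapsto g_{j,N}(z,\zeta_j)$ depends on the modulus of continuity of the orbit map $\zeta_j\mapsto (\zeta_j, S_j\zeta_j,\dots,S_j^N\zeta_j)$ and therefore grows (typically exponentially) in $N$, and a uniform limit of H\"older functions whose H\"older constants are unbounded need not be H\"older. So the hypothesis you need in order to apply Theorem \ref{Mart thm} to the observables $\Pi_j(z)$ themselves is not established by the argument you give. (It can be rescued by the standard interpolation: for $x,y$ choose $N$ comparable to $\log(1/d(x,y))$ and balance $2C\del^N$ against $K^{N}d(x,y)^\al$, which yields H\"older continuity of $\Pi_j(z)$ with a \emph{smaller} exponent $\al'$, uniformly in $j$; one must then check Theorem \ref{Mart thm} applies in the $\al'$-H\"older space, which it does since the assumptions on the maps are exponent-free.)

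The paper sidesteps this regularity-of-the-limit issue entirely by never letting the truncation level go to infinity inside the concentration step. Given $\ve>0$ it fixes $k$ with $C\del^k<\ve$, applies the concentration inequality of Theorem \ref{Mart thm} to the genuinely H\"older observables $\hat g_{j,k}(z,\cdot)=\ln g_{j,k}(z,\cdot)$ (whose H\"older constants are bounded in $j$ for this \emph{fixed} $k$, with constants in the inequality allowed to depend on $k$), obtains via Borel--Cantelli that the Ces\`aro averages of $\hat g_{j,k}-\bbE\hat g_{j,k}$ tend to $0$ a.s., and then uses $\sup_j|\Pi_j(z)-\hat g_{j,k}(z,\zeta_j)|\leq C_1\del^k<\ve$ to conclude $\limsup_n\big|\frac1n\sum_{j<n}\Pi_j(z)-\frac1n\sum_{j<n}\bbE\Pi_j(z)\big|\lesssim\ve$ for every $\ve$. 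I recommend you restructure your argument this way; as written, the key regularity claim is unproved and, without the exponent-loss interpolation, false in general.
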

The distribution of $\zeta_j$ does not depend on $j$ when, for instance, each one of the $S_j$'s has the form $S_j(x)=(m_1^{(j)}x_1,...,m_d^{(j)}x_d)\text{mod }1$ for some positive integers $m_i^{(j)}$, where $x=(x_1,...,x_d)\in\cE_j=\bbT^d$ belongs to some $d$-dimensional Torus (here $\nu_j^{(0)}=\text{Lebesgue}$ and $\textbf{h}_j^{(0)}\equiv1$).
More generally, 
when there exists  a probably measure $\textbf{n}$  on $\cY$ which assigns positive mass to open sets, so that $(S_j)_*\textbf{n}\ll\textbf{n}$ for any integer $j$ and  $r_j$ be defined by $e^{-r_j}=\frac{d(S_j)\textbf{n}}{d\textbf{n}}$, then by Theorem \ref{NonSinThm} we have $\boldsymbol{\la}_j(0)=1$ and $\boldsymbol{\nu}_j^{(0)}=\textbf{n}$. In this case, the claim that the distribution of $\zeta_j$ does not depend on $j$ means that $\textbf{h}_j^{(0)},\,j\geq0$ does not depend on $j$, and in Theorem \ref{II'} we will show that $\textbf{h}_j^{(0)}$ does not depend on $j$ and $\om$ when the $S_j$'s are drawn at random according to some, not necessarily stationary, classes of processes (so we will have "random random" non-stationary neighborhoods).

\begin{proof}[Proof of Theorem \ref{I}]
Set 
\[
g_{j,n}(z,\zeta_j)=\frac{l_{n+j+1}(\mathscr{L}_z^{j,n+1}\textbf{1})}{l_{n+j+1}(\mathscr{L}_z^{j+1,n}\textbf{1})}.
\]
Since $\la_j=\nu_{j+1}^{(z)}\cL_z^{(j)}\textbf{1}$ and the norms $\|\cL_z^{(j)}\|_\al$ are bounded in $j$ and $z\in U$,
applying (\ref{Exponential convergence}) we obtain that for any $n\geq1$ and $j\in\bbZ$,
\begin{equation}
|\la_j(z)-g_{j,n}(z,\zeta_j)|\leq C\del^n
\end{equation}
where $C>0$ and $\del\in(0,1)$ are constants,
and therefore $\la_j(z)$ and $\Pi_j(z)$ depend only on $\zeta_j$.
We also derive from the above that there exists a constant $r_2>0$ so that for any $j$ and complex $z$ so that $|z|<r_2$ we have 
\begin{equation}\label{Approx}
|\Pi_j(z)-\ln g_{j,n}(z,\zeta_j)|\leq C_1\del^n
\end{equation}
where $C_1>0$ is some constant we used that $\la_j(0)\geq a$ for some constant $a>0$. Set 
\[
\hat g_{j,n}(z,\zeta_j)=\ln g_{j,n}(z,\zeta_j).
\]
Let $\ve>0$ and let $k$ be so that $C\del^k<\ve$. In the circumstances of Theorem \ref{I} (i), the maps $g_{j,k}(z,\zeta_j)$ are H\"older continuous functions of $\zeta_j$ with the same exponent and with H\"older constants which are bounded in $j$.
Therefore, by Theorem \ref{Mart thm}, there exist constants $c_1,c_2$ and $c_3$, which may depend on $k$, so that for any $\ve>0$ we have 
\begin{equation}\label{Conc1}
P\big\{|\sum_{j=0}^{n-1}\hat g_{j,k}(z,\zeta_j)-\sum_{j=0}^{n-1}\bbE\hat g_{j,k}(z,\zeta_j)|\geq c_1+t\big\}\leq c_2e^{-c_3\frac{t^2}n}.
\end{equation}
By taking $t=t_n$ which grows faster than $\sqrt n$ but slower linearly in $n$, we derive from Borel-Cantelli lemma that
\[
\lim_{n\to\infty}\big|\frac1n\sum_{j=0}^{n-1}\hat g_{j,k}(z,\zeta_j)-\frac 1n\sum_{j=0}^{n-1}\bbE\hat g_{j,k}(z,\zeta_j)\big|=0,\,\,P-\text{a.s.}
\] 
and so for any $\ve$, 
\[
\limsup_{n\to\infty}
\big|\frac 1n\sum_{j=0}^{n-1}\Pi_j(z)-\frac 1n\sum_{j=0}^{n-1} \bbE \Pi_j(z)\big|<\ve,\,\,P-\text{a.s.}
\]
which completes the proof of the theorem.
\end{proof}

Next, let $\cQ$ be a compact metric space and let $\bar S_j:\cX\to\cX$ and $\bar r_j:\cX\to\bbR$ satisfy the conditions specified in Section \ref{SDSsec}  with $\cE_j=\cQ$. Let $(\bar\la_j^{(0)},\bar h_j^{(0)},\bar\nu_j^{(0)})$ be the RPF triplets corresponding to the transfer operator generated by $\bar S_j$ and $\bar r_j$ and let $\bar\mu_j=\bar h_j\nu_j^{(0)}$ be the appropriate Gibbs measure. 
Let $\eta_j=\bar S_0^j\eta_0=\bar S_{j-1}\circ\cdots\circ\bar S_{1}\circ\bar S_{0}\eta_0$ be a sequence of random variables, where $\eta_0$ is distributed according to $\bar\mu_0$.

\begin{theorem}\label{II'}
Suppose that there exists  a probability measure $\textbf{n}$  on $\cY$ which assigns positive mass to open sets, so that $(S_j)_*\textbf{n}\ll\textbf{n}$ for any integer $j$ and  $r_j$ is defined by 
\[
e^{-r_j}=\frac{d(S_j)\textbf{n}}{d\textbf{n}}.
\]
Assume also that $S_j$,\,$j\geq0$ are random, and that they have form $S_j=S_{\eta_j}$.
Then there exists a function $ h:\cX\to\bbR$ so that, $\hat P$-almost surely, the distribution $P_j=\mu_j$ of all the $\zeta_j$'s is $\ka:= hd\textbf{n}$ (i.e. in the extension we have $\mu_j=\mu_{\xi_j}=\ka$ for each $j$).
\end{theorem}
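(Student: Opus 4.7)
The first step is to apply Proposition \ref{NonSinThm} with $\textbf{m}_j = \textbf{n}$ for every $j$: the hypotheses $(S_j)_*\textbf{n} \ll \textbf{n}$ and $e^{-r_j} = d(S_j)_*\textbf{n}/d\textbf{n}$ are exactly what that proposition requires, so pointwise in $\hat P$ we obtain $\boldsymbol{\la}_j(0) = 1$ and $\boldsymbol{\nu}_j^{(0)} = \textbf{n}$. Consequently $\mu_j = \textbf{h}_j^{(0)}\, d\textbf{n}$, and the task reduces to showing that the random density $\textbf{h}_j^{(0)}$ is $\hat P$-almost surely equal to a deterministic element $h$, independent of $j$ and $\om$.

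Second, use (\ref{Exponential convergence}) with $z=0$; since $\boldsymbol{\la}_{j-n,n}(0) = 1$ and $\boldsymbol{\nu}_{j-n}^{(0)}(\textbf{1})=1$, this gives
\[
\big\|\mathscr{L}_0^{j-n,n}\textbf{1} - \textbf{h}_j^{(0)}\big\| \leq A\del^n.
\]
Since $\mathscr{L}_0^{(k)} = \mathscr{L}_0^{(\eta_k)}$ depends only on $\eta_k$, this identifies $\textbf{h}_j^{(0)}$ as a measurable functional of the past $(\eta_{j-n})_{n\geq 1}$. Passing to the invertible extension of the base $\bar S$-system, in which the extended law of $(\eta_k)_{k\in\bbZ}$ is $\sig$-invariant, the distribution of $\textbf{h}_j^{(0)}$ does not depend on $j$, so it is enough to show $\textbf{h}_0^{(0)}$ is $\hat P$-a.s.\ constant.

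The crucial third step is a tail-triviality argument. The uniform contraction of the real projective cone underlying the proof of Theorem \ref{RPF SDS} yields
\[
\big\|\mathscr{L}_0^{-n,n}g - \mathscr{L}_0^{-n,n}g'\big\| \leq C\del^n
\]
uniformly in $\om$, for any two strictly positive H\"older densities $g,g'$ of unit $\textbf{n}$-mass. Hence the limit $\textbf{h}_0^{(0)} = \lim_n \mathscr{L}_0^{-n,n}\textbf{1}$ is unchanged if we replace any fixed initial block of the past $(\eta_{-1},\dots,\eta_{-N})$ by an arbitrary alternative, forcing $\textbf{h}_0^{(0)}$ to be measurable with respect to the remote-past tail $\bigcap_{N\geq 1}\sig(\eta_{-n}: n\geq N)$. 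Combined with the mixing (indeed exactness) of the natural extension of the expanding base system $(\bar S_j, \bar r_j, \bar\mu_j)$, which itself follows from the RPF theory of Section \ref{SDSsec} applied to $\bar S_j$, this tail is $\hat P$-trivial, so $\textbf{h}_0^{(0)} = h$ a.s.\ for a deterministic $h$. Setting $\ka := h\, d\textbf{n}$ completes the argument.

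The main obstacle is the tail-triviality step: one must simultaneously verify that the cone contraction absorbs all dependence on the initial segment of the past (so the limit is genuinely tail-measurable in the $\|\cdot\|$-topology, and not merely invariant in distribution), and that the base system is sufficiently mixing on its natural extension for a Kolmogorov-type $0$--$1$ law to apply on this tail. The first point is a quantitative reformulation of Theorem \ref{RPF SDS}(iii); the second rests on the exactness of Gibbs measures for distance-expanding sequential systems, which can be extracted from the spectral-gap-type estimate (\ref{ExpDec}) applied to $(\bar S_j, \bar r_j, \bar\mu_j)$.
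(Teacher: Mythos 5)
Your first two steps are fine, but the third step --- the tail-triviality argument --- contains a genuine error: you have read the cone-contraction estimate in the wrong direction. The inequality $\|\mathscr{L}_0^{-n,n}g-\mathscr{L}_0^{-n,n}g'\|\le C\del^n$ expresses insensitivity of $\textbf{h}_0^{(0)}=\lim_n\mathscr{L}_0^{-n,n}\textbf{1}$ to the \emph{innermost} operators of the composition, i.e.\ to the remote past $(\eta_{-n})_{n>N}$, whose effect can be absorbed into the choice of the initial density $g$; it says nothing about the \emph{outermost} operators $\mathscr{L}_0^{(\eta_{-1})},\dots,\mathscr{L}_0^{(\eta_{-N})}$, which are applied last and whose influence does not decay at all. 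Indeed $\textbf{h}_0^{(0)}=\mathscr{L}_0^{(\eta_{-1})}\textbf{h}_{-1}^{(0)}/\la_{-1}(0)$, so changing $\eta_{-1}$ changes $\textbf{h}_0^{(0)}$ in general. Hence $\textbf{h}_0^{(0)}$ is a function of the full past that is \emph{not} measurable with respect to the remote-past tail $\bigcap_{N}\sig(\eta_{-n}:n\ge N)$, and no $0$--$1$ law applies. A sanity check that the soft argument proves too much: for i.i.d.\ driving sequences the remote-past tail is genuinely trivial and (\ref{ExpDec}) holds, yet the quenched equivariant densities of a random expanding system are in general honestly random functions of the past; your argument would force them to be deterministic. (Two smaller issues: the law of $(\eta_k)_{k\in\bbZ}$ under $\hat P$ need not be shift-invariant, since the base system is sequential and $\bar\mu_j$ varies with $j$; and a natural extension is invertible, hence never exact, so ``exactness of the natural extension'' is not available as stated.)

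The conclusion of the theorem is a special feature of the hypotheses rather than a consequence of mixing, and the paper's proof uses that structure essentially. One first passes to the \emph{annealed} system driven by i.i.d.\ copies $\beta_j$ of $\eta_0$, for which (as in \cite{Aimino}) there is an invariant measure of the form $P\times(h\,d\textbf{n})$, and then, as in Section 4.1 of \cite{Annealed}, one shows that $h$ is a \emph{common} fixed density, $\mathscr{L}_0^{(q)}h=h$ for $\bar\mu_0$-almost every symbol $q$. Pulling this full-measure set back by $\bar S_0^j$ (using $(\bar S_0^j)_*\bar\mu_0=\bar\mu_j$ and the mutual absolute continuity of the Gibbs measures $\bar\mu_j$) and using continuity in $y$ yields $\mathscr{L}_0^{(\hat\eta_k)}h=h$ almost surely for every $k$. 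Only at that point does the exponential convergence (\ref{Exponential convergence}) enter: it identifies $\textbf{h}_k^{(0)}=\lim_n\mathscr{L}_0^{(\hat\eta_{k-1})}\circ\cdots\circ\mathscr{L}_0^{(\hat\eta_{k-n})}h$ with $\textbf{n}(h)h=h$. Any repair of your proof must produce this common fixed density; the contraction estimate alone cannot.
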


\begin{proof}
Note first that 
the assumptions in the statement of the theorem mean that 
 $S_j$ and $r_j$ are chosen at random by the invertible extension $\hat\eta_j,\,j\in\bbZ$ of $\eta_j,\,j\geq0$ and that they depend only on the $0$-the coordinate.
We can assume that $S_j,\,j<0$ are chosen at random according to the $j$-the coordinate in this extension. In this case,
we only need to show that there exists a function $h:\cX\to\bbR$ so that for any $j\geq0$ we have 
$\textbf{h}^{(0)}_j= h$ ($\hat P$-a.s.).
 
Let $\beta_j\,j\geq0$ be independent copies of $\eta_0$, and consider the setup of i.i.d. maps $S_{\beta_j},\,j\geq0$. 
Suppose that the above i.i.d. process is defined on a probability space $(\Om,\cF,P)$ so that
$\beta_j=\te^j\beta_0$ for some $P$ preserving map $\te$. Consider the skew product map $S(\om,x)=(\te\om,S_{\beta_0(\om)} x)$.
Then, as in \cite{Aimino}, there exists an $S$-invariant measure of the form $P\times (hd\textbf{n})$ for some strictly positive continuous function $h:\cX\to\bbR$ so that $\textbf{n}(h)=1$. Exactly as in Section 4.1 in \cite{Annealed}, it follows that for $\bar\mu_0$-almost any $q\in\cQ$,
\[
\mathscr{L}_0^{(q)}\bar h=\bar h.
\]
Fix some $y\in\cQ$ and set 
\[
\Gam_y=\{q\in\cQ:\,\mathscr{L}_0^{(q)} h(y)= h(y)\}.
\]
Then $\bar\mu_0(\cQ\setminus\Gam_y)=0$ for each $y$.
Since $\bar\nu_j^{(0)}=\textbf{n}$ we have
\[
\bar\mu_j(\cQ\setminus\Gam_y)=\int_{\cQ\setminus\Gam_y}\frac{\bar h_j(t)}{\bar h_0(t)}d\bar{\mu}_0(t)=0
\] 
and therefore
\[
\bar{\mu}_0\big\{\bigcap_{j=0}^\infty (\bar S_0^j)^{-1}\Gam_y\big\}=1
\]
and therefore for any $y$ and $j\geq0$ we have 
\[
\mathscr{L}_0^{(\hat\eta_j)}h(y)=h(y),\,\,\text{a.s.}.
\]
Since both sides are continuous in $y$ and $\cY$ is compact we conclude that 
\[
\mathscr{L}_0^{(\hat \eta_j)}h=h,\,\hat P-\text{a.s.}
\]
for any $j\geq0$. Replacing $\hat\eta_0$ with 
$\hat\eta_k$ for any integer $k$, and using that $\hat \eta_{k+j}=\sig^j\hat\eta_k$ for any $j\geq0$ we derive that 
\[
\mathscr{L}_0^{(\hat\eta_k)}h=h
\]
for any $k$. Since $\bar\la_{j}(0)=1$ for any $j$, using (\ref{Exponential convergence}) we derive that
\[
\lim_{n\to\infty}\mathscr{L}_0^{(\hat\eta_{k-1})}\circ\mathscr{L}_0^{(\hat\eta_{k-2})}\circ\cdots\circ\mathscr{L}_0^{(\hat\eta_{k-n})}h=\textbf{n}(h)\textbf{h}_k=\textbf{h}_k
\]
and so $\textbf{h}_k=h$ for any $k$. Note that (\ref{Exponential convergence}) holds true for H\"older continuous functions, but for real $z$'s the converges itself, without rates, holds true for continuous functions by monotonicity arguments due to Walters (see the proof of Proposition 3.19 in \cite{MSU} and \cite{Walt}).
\end{proof}

\subsubsection{\textbf{Converges rate towards the moments}}\label{ConvR}

Suppose that $S_j,\,j\in\bbZ$ is a nonstationary subshift of finite type so that $\textbf{h}_j^{(0)}$ does not depend on $j$, i.e. that $(S_j)_*\mu=\mu$ for some probability measure $\mu$ (e.g. when $S_j$ is random).
Moreover, assume that $T_j,f_j$ and $u_j$ depend only on the $j$-the coordinate $X_j$. Then by  (\ref{Approx}) the random variables $\Pi_j(z)$ can be approximated exponentially fast in the $L^\infty$ norm by functions of the coordinates at places $j,j+1,...,j+n$. Taking into account Theorem  \ref{SFT pros} (ii), we conclude that all the conditions of Theorem 2.4 in \cite{HafMD} hold true with $\ell=1$, with $\Pi_j(z)$ in place of $\xi_j$ (from there) and with any bounded function $F$ which identifies with the function $G(x)=x$ on a compact set which contains all the possible values of all of the $\Pi_j(z)$'s. In particular, (2.11) from \cite{HafMD} holds true, and so for any $t\geq0$, $r\geq1$ and $n\geq1$,
\begin{equation}\label{Conc2}
P\big\{|\sum_{j=0}^{n-1}\Pi_j(z)-n\bbE\Pi_0(z)|\geq t+Cn\del^r+r\big\}\leq 2e^{-C\frac{t^2}{nr}}
\end{equation}
where $C$ is some positive constant. Taking $r$ of the form $r=a\ln n$ for an appropriate $a$ we derive that for some constant $C_1>0$, for any $t\geq0$ and $n\geq1$ we have
\[
P\big\{\big|\sum_{j=0}^{n-1}\Pi_j(z)-n\bbE\Pi_0(z)\big|\geq t+C_1\ln n\}\leq 2e^{-C_1\frac{t^2}{n\ln n}}.
\]
Taking $t$ of the form $t=t_n=a_1n^{\frac12}\ln n$ and using the Borel-Cantelli lemma we derive that $P$-a.s. for any sufficiently large $n$,
\[
\big|\frac1n\sum_{j=0}^{n-1}\Pi_j(z)-\bbE\Pi_0(z)\big|\leq (1+C_1)n^{-\frac12}\ln n.
\]
Since $\la_j(z)$ is analytic in $z$ and uniformly bounded in $z$ and $j$, it follows that for any $k\geq1$ there exists 
a constant $b_k$ so that for any sufficiently large $n$,
\[
\big|\frac1n\sum_{j=0}^{n-1}\Pi_j(z)-\bbE\Pi_0(z)\big|\leq b_kn^{-\frac12}\ln n
\]
and (\ref{MomRateTemp}) follows from Theorem \ref{MomThm.0}. The almost optimal Berry-Esseen inequality (\ref{BE temp}) follows by arguments similar to the ones in the proof of Theorem \ref{Berry-Esseen}.

\end{document}